\def\R{\mathbb{R}}
\def\N{\mathbb{N}}
\newcommand{\eps}{\varepsilon}
\renewcommand{\geq}{\geqslant}
\renewcommand{\leq}{\leqslant}
\newtheorem{theorem}{Theorem}
\newtheorem{proposition}{Proposition}
\newtheorem{corollary}{Corollary}
\newtheorem{definition}{Definition}
\newtheorem{lemma}{Lemma}
\theoremstyle{definition}
\theoremstyle{definition}\newtheorem{remark}{Remark}
\newcommand{\supp}{\mathrm{supp}}
\newcommand{\dxv}{\,dx\,dv}
\renewcommand{\SS}{\mathcal{S}}
\newcommand{\SSS}{\mathbb{S}}
\newcommand{\TT}{\mathcal{T}}
\newcommand{\TTT}{\mathbb{T}}
\title{Control to flocking of the kinetic Cucker-Smale model}
\author{
Benedetto Piccoli\thanks{Department of Mathematical Sciences, Rutgers University - Camden, Camden, NJ ({\tt piccoli@camden.rutgers.edu}).}
\and
Francesco Rossi\thanks{Aix Marseille Universit\'e, CNRS, ENSAM, Universit\'e de Toulon, LSIS UMR 7296,13397, Marseille, France ({\tt francesco.rossi@lsis.org}).}
\and
Emmanuel Tr\'elat\thanks{Sorbonne Universit\'es, UPMC Univ Paris 06, CNRS UMR 7598, Laboratoire Jacques-Louis Lions, Institut Universitaire de France, F-75005, Paris, France ({\tt emmanuel.trelat@upmc.fr}).}
}
\date{}
\begin{document}

\maketitle

\begin{abstract}
The well-known Cucker-Smale model is a macroscopic system reflecting flocking, i.e. the alignment of velocities in a group of autonomous agents having mutual interactions. In the present paper, we consider the mean-field limit of that model, called \textit{the kinetic Cucker-Smale model}, which is a transport partial differential equation involving nonlocal terms. 
It is known that flocking is reached asymptotically whenever the initial conditions of the group of agents are in a favorable configuration. 
For other initial configurations, it is natural to investigate whether flocking can be enforced  by means of an appropriate external force, applied to an adequate time-varying subdomain. 

In this paper we prove that we can drive to flocking any group of agents governed by the kinetic Cucker-Smale model, by means of a \textit{sparse} centralized control strategy, and this, for any initial configuration of the crowd. Here, ``sparse control" means that the action at each time is limited over an arbitrary proportion of the crowd, or, as a variant, of the space of configurations; ``centralized'' means that the strategy is computed by an external agent knowing the configuration of all agents. We stress that we do not only design a control function (in a sampled feedback form), but also a time-varying control domain on which the action is applied. The sparsity constraint reflects the fact that one cannot act on the whole crowd at every instant of time.

Our approach is based on geometric considerations on the velocity field of the kinetic Cucker-Smale PDE, and in particular on the analysis of the particle flow generated by this vector field. The control domain and the control functions are designed to satisfy appropriate constraints, and such that, for any initial configuration, the velocity part of the support of the measure solution asymptotically shrinks to a singleton, which means flocking.
\end{abstract}

\medskip

\noindent{\bf Keywords:} Cucker-Smale model, transport PDE's with nonlocal terms, collective behavior, control.

\medskip


\section{Introduction}
In recent years, the study of collective behavior of a crowd of autonomous agents has drawn a great interest from scientific communities, e.g., in civil engineering (for evacuation problems), robotics (coordination of robots), computer science and sociology (social networks), and biology (crowds of animals). In particular, it is well known that some simple rules of interaction between agents can provide the formation of special patterns, like in formations of bird flocks, lines, etc. This phenomenon is often referred to as \textit{self-organization}.
Beyond the problem of analyzing the collective behavior of a ``closed'' sytem, it is interesting to understand what changes of behavior can be induced by an external agent (e.g., a policy maker) to the crowd. In other words, we are interested in understanding how one can act on a group of agents whose movement is governed by some continuous model of collective behavior.
For example, one can try to enforce the creation of patterns when they are not formed naturally, or break the formation of such patterns. This is the problem of \textit{control of crowds}, that we address in this article for the kinetic (PDE) version of the celebrated Cucker-Smale model introduced in \cite{CS}.

From the analysis point of view, one needs to pass from a big set of simple rules for each individual to a model capable of capture the dynamics of the whole crowd. This can be solved via the so called mean-field process, that permits to consider the limit of a set of ordinary differential equations (one for each agent) to a partial differential equation for the density of the whole crowd.

In view of controlling such models, two approaches do emerge: one can either address a control problem for a finite number of agents, solve it and then pass to the limit in some appropriate sense (see, e.g., \cite{Mattia,FPR,FS}); or one can directly address the control problem for the PDE model: this is the point of view that we adopt in this paper.

\medskip

In this paper, we consider the \textit{controlled kinetic Cucker-Smale equation}
\begin{equation}\label{kinetic_CS}
\boxed{
\partial_t \mu +\langle v, \mathrm{grad}_x \mu\rangle+\mathrm{div}_v \left( (\xi[\mu]+\chi_\omega u) \mu \right)=0 ,
}
\end{equation}
where $\mu(t)$ is a probability measure on $\R^d\times\R^d$ for every time $t$ (if $\mu(t,x,v)=f(t,x,v)\, \dxv$, then $f$ is the density of the crowd), with $d\in\N^*$ fixed, and $\xi[\mu]$ is the \textit{interaction kernel}, defined by
\begin{equation}\label{def_xi}
\xi[\mu](x,v)=\int_{\R^d\times\R^d} \phi(\Vert x-y\Vert) (w-v)\,d\mu(y,w) ,
\end{equation}
for every probability measure $\mu$ on $\R^d\times\R^d$, and for every $(x,v)\in\R^d\times\R^d$. 
The function $\phi:\R\to\R$ is a nonincreasing positive function, accounting for the influence between two particles, depending only on their mutual distance.
The term $\chi_\omega u$ is the control, which consists of:
\begin{itemize}
\item the control set $\omega=\omega(t)\subset\R^d\times\R^d$ (on which the control force acts),
\item the control force $u=u(t,x,v)\in\R^d$.
\end{itemize}
We stress that the control is not only the force $u$, but also the set $\omega$ on which the force acts.
Physically, $u$ represents an acceleration (as in \cite{CS-control} for the finite-dimensional model), and $\omega(t)$ is the portion of the space-velocity space on which one is allowed to act at time $t$. It is interesting to note that, in the usual literature on control, it is not common to consider a subset of the space as a control. 

There are many results in the literature treating the problem of self-organization of a given crowd of agents, like flocks of birds (see \cite{BCCCCGLOPPVZ09, cavagna,CKFL05, mecholsky,PE99, vicsek,YEECBKMS09}), pedestrian crowds (see \cite{crpito11,realistic-crowd}), robot formations (see \cite{MR2000132,Kumar}), or socio-economic networks (see \cite{beheto12,Krause-Hegselmann}). A nonexhaustive list of references on the subject from the scientific, biological, and even politic points of view are the books \cite{axelrod,camazine,helbingbook,jackson} and the surveys \cite{castellano,helbingreview,MotschTadmor_SIREV,olfati,reynolds,vicsek}. In particular, in \cite{olfati,reynolds} the authors classify interaction forces into flocking centering, collision avoidance and velocity matching. Clearly, both the Cucker-Smale and the kinetic Cucker-Smale models deal with velocity matching forces only. 

A fundamental tool for this topic is the notion of mean-field limit, where one obtains a distribution of crowd by considering a crowd with a finite number $N$ and by letting $N$ tend to the infinity. The result of the mean-field limit is also called a ``kinetic'' model. For this reason, we call the model in \eqref{kinetic_CS} the \textit{kinetic Cucker-Smale model}. The mean-field limit of the finite-dimensional Cucker-Smale model was first derived in \cite{hatad} (see also \cite{carfor,haliu}). Other mean-field limits of alignment models are studied in \cite{carrillo21,degond,tadmortan}. Many other mean-field limits of models defined for a finite number of agents have been studied (see, e.g., \cite{canuto,duering,toscani}).

Assuming now that one is allowed to apply an action on the system, it is very natural to try to steer the system asymptotically to flocking. This may have many applications. We refer the reader to examples of centralized and distributed control algorithms in \cite{bullo} (see also the references therein). All these examples are defined for a finite number of agents, possibly very large. Instead, the control of mean-field transport equations is a recent field of research (see, e.g., \cite{FPR,Leautaud}, see also stochastic models in \cite{markov}).

\medskip

Note that \eqref{kinetic_CS} is a transport PDE with \textit{nonlocal interaction terms}. As it is evident from the expression of $\xi[\mu]$, the velocity field $\xi[\mu]$ acting on the $v$ variable depends globally on the measure $\mu$. In other words, if $\mu$ has a density $f$, then $\xi[\mu](x)$ is not uniquely determined by the value of $f(x)$, but it depends on the value of $f$ in the whole space $\R^d\times\R^d$. 
Existence, uniqueness and regularity of solutions for this kind of equation with no control term ($u=0$) have been established quite recently (see \cite{ambrosio}). We will establish the well-posedness of \eqref{kinetic_CS} in Section \ref{s-pde}.

In the present paper, our objective is to design an explicit control $\chi_\omega u$, satisfying realistic constraints, able to steer the system \eqref{kinetic_CS} from any initial condition to flocking. Let us first recall what is flocking.

\medskip

Throughout the paper, we denote by $\mathcal{P}(\R^d\times\R^d)$ the set of probability measures on $\R^d\times\R^d$, by $\mathcal{P}_c(\R^d\times\R^d)$ the set of probability measures on $\R^d\times\R^d$ with compact support, and by $\mathcal{P}^{ac}_c(\R^d\times\R^d)$ the set of probability measures on $\R^d\times\R^d$ with compact support and that are absolutely continuous with respect to the Lebesgue measure. We denote with $\supp(\mu)$ the support of $\mu$.

Given a solution $\mu\in C^0(\R,\mathcal{P}_c(\R^d\times\R^d))$ of \eqref{kinetic_CS}, we define the space barycenter $\bar x(t)$ and the velocity barycenter $\bar v(t)$ of $\mu(t)$ by
\begin{equation}\label{def_space_velocity_barycenter}
{\bar x}(t)=\int_{\R^d\times\R^d} x \, d\mu(t)(x,v),\qquad
{\bar v}(t)=\int_{\R^d\times\R^d} v \, d\mu(t)(x,v) ,
\end{equation}
for every $t\in\R$.
If there is no control ($u=0$), then $\bar v(t)$ is constant in time. If there is a control, then, as we will see further, we have $\dot {\bar x}(t)={\bar v}(t)$ and $\dot {\bar v}(t)=\int_{\omega(t)} u(t,x,v)\, d\mu(t)(x,v)$.

\begin{definition} \label{def-flock}
Let $\mu\in C^0(\R,\mathcal{P}_c(\R^d\times\R^d))$ be a solution of \eqref{kinetic_CS} with $u\equiv 0$. We say that $\mu$ converges to flocking if the two following conditions hold:
\begin{itemize}
\item there exists $X_M>0$ such that $\supp(\mu(t))\subseteq B({\bar x}(t),X_M)\times \R^d$ for every $t>0$;
\item $\Lambda(t)=\int_{\R^d\times\R^d} |v-{\bar v}|^2\,d\mu(t)\longrightarrow 0$ as $t\rightarrow +\infty$.
\end{itemize}
We also define the {\em flocking region} as the set of configurations $\mu^0\in \mathcal{P}_c(\R^d\times\R^d)$ such that the solution of \eqref{kinetic_CS} with $u\equiv 0$ and initial data $\mu(0)=\mu^0$ converges to flocking.
\end{definition}

Note that, defining the velocity marginal of $\mu(t)$ by $\mu_v(t)(A)=\mu(t)(\R^d\times A)$ for every measurable subset $A$ of $\R^d$, this definition of flocking means that $\mu_v(t)$ converges (vaguely) to the Dirac measure $\delta_{\bar v}$, while the space support remains bounded around $\bar x(t)$.

Intuitively, $\mu(t)$ is the distribution at time $t$ of a given crowd of agents in space $x$ and velocity $v$. Asymptotic flocking means that, in infinite time, all agents tend to align their velocity component, as a flock of birds that, asymptotically, align all their velocities and then fly in a common direction. Flocking can also be more abstract and the variable $v$ can represent, for instance, an opinion: in that case flocking means consensus. Then, the techniques presented here may be adapted for similar problems for consensus (reaching a common value for all state variables) or alignment (reaching a common value in some coordinates of the state variable).

In order to steer a given crowd to flocking, the control term in \eqref{kinetic_CS} means that we are allowed to act with an external force, of amplitude $u(t,x,v)$, supported on the control domain $\omega(t)$. Our objective is then to design appropriate functions $t\mapsto u(t,\cdot)$ and $t\mapsto\omega(t)$ leading to flocking.
In order to reflect the fact that, at every instant of time, one can act only on a small proportion of the crowd, with a force of finite amplitude, we impose some constraints on the control function $u$ and on the control domain $\omega$.

Let $c>0$ be arbitrary.
We consider the class of controls $\chi_\omega u$, where $u\in L^\infty(\R\times\R^d\times\R^d)$ and $\omega(t)$ is a measurable subset of $\R^d\times\R^d$ for every time $t$, satisfying the constraints:
\begin{equation}\label{cont_u}
\Vert u(t,\cdot,\cdot)\Vert _{L^\infty(\R^d\times\R^d)}\leq 1 ,
\end{equation}
for almost every time $t$ and 
\begin{equation}\label{cont_omega}
\mu(t)(\omega(t)) = \int_{\omega(t)} d\mu(t)(x,v) \leq c  ,  
\end{equation}
for every time $t$. 

The constraint \eqref{cont_u} means that the control function (representing the external action) is bounded, and the constraint \eqref{cont_omega} means that one is allowed to act only on \textit{a given proportion $c$ of the crowd}.
In \eqref{cont_omega}, $\mu(t)$ is the solution at time $t$ of \eqref{kinetic_CS}, associated with the control $\chi_\omega$. The existence and uniqueness of solutions will be proved in the following.

As a variant of \eqref{cont_omega}, we will consider the following constraint as well:
\begin{equation}\label{cont_omega_variante}
\vert\omega(t)\vert=\int_{\omega(t)} \dxv\leq c ,
\end{equation}
for every time $t$. 

The fact that the action is limited either to a given (possibly small) proportion of the crowd, or of the space of configurations, is related to the concept of \textit{sparsity}, in which one aims at controlling a system (or, reconstructing some information) with a minimal amount of action, like a shepherd dog trying to maintain a flock of sheeps.

\medskip

Note that it is obviously necessary to allow the control domain to move because, if the control domain $\omega$ is fixed (in time), then it is not difficult to construct initial data $\mu^0$ that cannot be steered to flocking, for any control function $u$. Indeed, consider the example of a particle model without control that is not steered to flocking\footnote{An example in dimension one with two agents for the finite-dimensional Cucker-Smale model is given in \cite{CS}.} and consider a fixed control set $\omega$, disjoint of the trajectories of the system (for example a control set with velocity coordinates that are larger than the maximum of the velocities of the particles). Then, replace the particles with absolutely continuous measures centered around them, that is, $(x,v)$ is replaced with $\chi_{[x-\eps,x+\eps]\times[v-\eps,v+\eps]}$. Choosing $\eps$ sufficiently small, the dynamics of the resulting measure with the same $\omega$ is close to the dynamics of the particle model, hence it does not converge to flocking.

\medskip

In this paper, we will prove the following result.

\begin{theorem}\label{mainthm}
Let $c>0$ be arbitrary.
For every $\mu^0\in\mathcal{P}_c^{ac}(\R^d\times\R^d)$, there exists a control $\chi_\omega u$, satisfying the constraints \eqref{cont_u} and \eqref{cont_omega} (or, as a variant, the constraints \eqref{cont_u} and \eqref{cont_omega_variante}), and there exists a unique solution $\mu\in C^0(\R; \mathcal{P}_c^{ac}(\R^d\times\R^d))$ of \eqref{kinetic_CS} such that $\mu(0)=\mu^0$, and converging to flocking as $t$ tends to $+\infty$.
\end{theorem}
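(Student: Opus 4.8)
The plan is to act on the system only during a finite time interval $[0,T]$, steering $\mu^0$ into the flocking region of Definition~\ref{def-flock}, and then to switch the control off ($u\equiv 0$) for $t>T$, so that on $[T,+\infty)$ the classical asymptotic flocking result for the uncontrolled equation \eqref{kinetic_CS} (see \cite{hatad,ambrosio}) applies and yields convergence to flocking. Recall that this result provides a positive nonincreasing function $r\mapsto F(r)$ — essentially $F(r)=\int_r^{+\infty}\phi(2s)\,ds$, which is positive for every finite $r$ since $\phi$ is positive — such that every $\nu\in\mathcal P_c(\R^d\times\R^d)$ whose velocity diameter $D_V(\nu)$ and spatial diameter $D_X(\nu)$ satisfy $D_V(\nu)<F(D_X(\nu))$ lies in the flocking region. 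It therefore suffices to design a sparse control $\chi_\omega u$ and a time $T$ such that $\mu(T)\in\mathcal P^{ac}_c(\R^d\times\R^d)$, $\supp\mu(t)$ stays bounded around $\bar x(t)$ on $[0,T]$, and $D_V(\mu(T))<F(D_X(\mu(T)))$.

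For the control on $[0,T]$ I would sweep cyclically over the coordinate directions $e_1,\dots,e_d$. Denote by $[\underline v_k(t),\overline v_k(t)]$ the projection of $\supp\mu(t)$ onto the $k$-th velocity axis and put $s_k(t)=\overline v_k(t)-\underline v_k(t)$. During the $k$-th phase, let $\omega(t)=\{(x,v):v\cdot e_k\geq v^*_k(t)\}$, with $v^*_k(t)$ chosen so that $\mu(t)(\omega(t))$ equals a fixed value in $(0,\min(c,1))$ — this is possible with $\underline v_k(t)<v^*_k(t)<\overline v_k(t)$ because $\mu(t)$ is nonatomic — so that \eqref{cont_omega} holds, and let $u(t,\cdot)\equiv -e_k$, which trivially meets \eqref{cont_u}. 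The key is a one-sided differential inequality read off the characteristic flow: at a characteristic realising the maximum $\overline v_k(t)$ one has $\langle\xi[\mu(t)],e_k\rangle\leq 0$ (the Cucker--Smale field points into the convex hull of the velocity support), and this characteristic lies in $\omega(t)$ and hence feels the control $-e_k$, so $\frac{d}{dt}\overline v_k(t)\leq -1$; at a characteristic realising the minimum $\underline v_k(t)$ one has $\langle\xi[\mu(t)],e_k\rangle\geq 0$ and this characteristic is \emph{not} in $\omega(t)$, so $\frac{d}{dt}\underline v_k(t)\geq 0$. Thus $\frac{d}{dt}s_k(t)\leq -1$ while the $k$-th phase runs, so we can run it until $s_k$ drops below a prescribed $\varepsilon>0$, which takes a time at most $s_k\leq D_V(\mu^0)$. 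Moreover, in every later phase the control acts only in some direction $e_j$ with $j\neq k$, which leaves the $k$-th velocity coordinate unaffected, while the pure Cucker--Smale interaction is contractive on the convex hull of velocities; hence once $s_k\leq\varepsilon$ this persists for all later times. After the $d$ phases, at time $T=\sum_kT_k\leq d\,D_V(\mu^0)$, all $s_k(T)\leq\varepsilon$, so $D_V(\mu(T))\leq\sqrt d\,\varepsilon$.

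It remains to reconcile this with the flocking threshold, and the decisive point is that the terminal spatial diameter does \emph{not} depend on $\varepsilon$. Indeed the velocity box $\prod_k[\underline v_k(t),\overline v_k(t)]$ is nonincreasing in time (each $\overline v_k$ being nonincreasing and each $\underline v_k$ nondecreasing, by the inequalities above for the active direction and by contractivity of $\xi$ for the others), so velocities remain in the initial box and $\frac{d}{dt}D_X(t)\leq D_V(t)\leq D_V(\mu^0)$; hence $D_X(\mu(T))\leq D_X(\mu^0)+d\,D_V(\mu^0)^2=:X_{\max}$, which depends only on $\mu^0$. Choosing $\varepsilon$ so small that $\sqrt d\,\varepsilon<F(X_{\max})\leq F(D_X(\mu(T)))$ then gives $D_V(\mu(T))<F(D_X(\mu(T)))$, i.e. $\mu(T)$ lies in the flocking region; and $\mu(T)$ is absolutely continuous with compact support, being the pushforward of $\mu^0$ by the characteristic flow. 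Switching off the control for $t>T$ concludes. For the variant constraint \eqref{cont_omega_variante}, it suffices to replace $\omega(t)$ above by its intersection with $B(\bar x(t),X^*(t))\times\prod_j[\underline v_j(t),\overline v_j(t)]$, whose Lebesgue measure equals $(\overline v_k(t)-v^*_k(t))\,|B(\bar x(t),X^*(t))|\prod_{j\neq k}s_j(t)$: this can be kept equal to $c$, still with $v^*_k(t)\in(\underline v_k(t),\overline v_k(t))$ and with $\omega(t)$ still containing every point of $\supp\mu(t)$ having $v\cdot e_k\geq v^*_k(t)$, simply by enlarging $X^*(t)$ as the velocity box shrinks; this changes only the bookkeeping of $\omega$, not the dynamics, since the controlled characteristics are unchanged.

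The step I expect to be the real obstacle is not this geometric bookkeeping but the well-posedness of \eqref{kinetic_CS} for the class of controls used here, which is why I would establish it first in Section~\ref{s-pde}: the driving field $(v,\xi[\mu]+\chi_\omega u)$ is discontinuous across the moving hyperplane $\{v\cdot e_k=v^*_k(t)\}$, so some care is required (a smooth regularisation of $u$ across $\partial\omega$, or a sampled-in-time implementation, or a Filippov/superposition argument) in order to define the characteristic flow, to propagate absolute continuity and compactness of the support, and to guarantee that $t\mapsto\overline v_k(t),\underline v_k(t),\bar x(t)$ are Lipschitz so that the Danskin-type differential inequalities above are legitimate. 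Granting that, the construction above together with the classical Cucker--Smale flocking estimate yields the theorem.
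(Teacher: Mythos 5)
Your macroscopic strategy --- shrink the velocity support coordinate-by-coordinate with a cyclic sweep until the threshold of Corollary~\ref{cor_flocksize} is met, then switch the control off --- is the same as the paper's. But your control geometry is genuinely different: you take $\omega(t)$ to be a moving half-space $\{v_k\geq v_k^*(t)\}$ in velocity with $u\equiv -e_k$, whereas the paper slices $\omega$ by \emph{space} into strips $[x_{[i-1]},x_{[i]}]\times\cdots$ of $\mu$-mass $c/2$, sweeps through them one subinterval at a time, and uses a Lipschitz profile $u=\psi\,(v-\bar v)/|v-\bar v|$ supported only \emph{outside} an invariant velocity band $[\bar v-\alpha^0-\beta^0,\bar v+\alpha^0+\beta^0]$. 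Those paper's choices are not cosmetic; they are forced by the very issues you flag at the end, and those issues go beyond mere existence of the characteristic flow.

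Concretely: (i) with $u=-e_k\chi_\omega$, one has $\mathrm{div}_v(\chi_\omega u)=-\delta_{\{v_k=v_k^*\}}$, so the flow compresses the density across $\partial\omega$ by a factor of order $(1+|\xi_k|)/|\xi_k|$, which degenerates as $v_k^*\to\bar v_k$; and nothing in your construction keeps $v_k^*(t)$ above $\bar v_k(t)$, so it can happen that $\xi_k>0$ just below $v_k^*$ while $\xi_k-1<0$ just above, in which case the (Filippov) flow sticks at the hyperplane and produces a Dirac component in $\mu(t)$ in finite time --- exiting $\mathcal P^{ac}_c$, which the theorem requires the solution to stay in. The paper's Lemma~\ref{p-perp} identifies the band where $\xi_k$ already has the correct sign and switches $u$ on only outside it, so $\xi_k$ and $u_k$ never compete; the Lipschitz $\psi$ has gradient $\leq 1/\beta^0$, Lemma~\ref{prop_e-Lp} converts this to an $L^\infty$ bound on the density, and Lemma~\ref{p-SSeta} bounds $\beta^i,\varepsilon^i$ below so the iteration terminates. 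None of this is covered by a generic ``regularise $\chi_\omega u$''. (ii) The constraint \eqref{cont_omega} must be verified for \emph{all} $t$ in each sampling window, not only at the sampling instant; with a velocity threshold the control actively drives mass across $\partial\omega$, and whenever $\xi_k>0$ near $v_k^*$ uncontrolled mass flows \emph{into} $\omega$, so this is not automatic. The paper's spatial strips are chosen precisely so that mass can enter $\omega_{[i]}$ only by $x$-transport at speed $\leq W^0$, which the $\varepsilon^0$-padding and the bound $T^0\leq\varepsilon^0/W^0$ make quantitative. So the idea is sound, but the content of Lemmas~\ref{p-perp}, \ref{prop_e-Lp}, \ref{p-SS} and \ref{p-SSeta} --- which is most of the proof --- is missing rather than merely deferred, and your specific choice of $\omega$ would need to be changed to carry it out.
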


Note that, given any initial measure that is absolutely continuous and of compact support, the control $\chi_\omega u$ that we design generates a solution of \eqref{kinetic_CS} that remains absolutely continuous and of compact support. It is important to note that, from a technical point of view, we will be able to prove existence and uniqueness of the solution as long as the control function $\chi_\omega u$ remains Lipschitz with respect to state variables. Since $\mu$ converges to flocking, $\mu$ becomes singular only in infinite time.

\begin{remark} In Section \ref{sec_proof_mainthm}, we will design an {\em explicit} control $\chi_\omega u$ steering the system \eqref{kinetic_CS} to flocking, with the following properties:
\begin{itemize}
\item $\omega(t)$ is piecewise constant in $t$;
\item $u(t,x,v)$ is piecewise constant in $t$ for $(x,v)$ fixed, continuous and piecewise linear in $(x,v)$ for $t$ fixed;
\item for any initial configuration $\mu^0\in\mathcal{P}_c^{ac}(\R^d\times\R^d)$, there exists a time $T(\mu^0)\geq 0$ such that $u(t,x,v)=0$ for every $t>T(\mu^0)$.
\end{itemize}
Note that the control that we design is ``centralized'', in the sense that the external agent acting on the crowd has to know the configuration of all agents, at every instant of time.

As we will see, the solution $\mu(t)$ of \eqref{kinetic_CS} is exactly the pushforward of the initial measure under the controlled particle flow, which is the flow of a given vector field involving the control term. Our strategy for designing a control steering the system to flocking consists in interpreting it as a particle system and in choosing the control domain and the control function such that the velocity field points inwards the domain, so that the size of the velocity support of $\mu(t)$ decreases (exponentially) in time. Our construction goes by considering successive (small enough) intervals of times along which the control domain remains constant, whence the property of being piecewise constant in time.

The third item above means that the control is not active for every time $t>0$. Indeed, we prove in Theorem \ref{c-flocksize} that, for the uncontrolled equation \eqref{kinetic_CS} (i.e., with $u\equiv 0$), if the support of $\mu(t_0)$ is ``small enough" at some time $t_0$ then $\mu$ converges to flocking, without requiring any action on the crowd. As a consequence, if the initial crowd is in a favorable configuration at the initial time (if it is not too much dispersed), then the crowd will naturally converge to flocking, without any control.
Then our control strategy consists of applying an appropriate control, until $\mu(t)$ reaches the \textit{flocking region} defined in Definition \ref{def-flock}, in which its support is small enough so that $\mu$ converges naturally (without any control) to flocking. This means that we switch off the control after a time $T(\mu^0)$, depending on the initial distribution $\mu^0$: it is expected that $T(\mu^0)$ is larger as the initial measure $\mu^0$ is more dispersed.

We stress that, in our main result (Theorem \ref{mainthm}), we do not only prove the existence of a control driving any initial crowd to flocking. Our procedure is constructive.
In our strategy, we construct a control action $u$ depending on $(t,x,v)$, and we design a control domain $\omega$ depending on $\mu(t)$. Hence, in this sense, we design a \textit{sampled feedback}. The control domain is piecewise constant in time, but this piecewise constant domain is designed in function of $\mu$.
\end{remark}

\begin{remark}\label{rem_intro1} 
In \cite{CS-control,CS-control-1}, the concept of componentwise sparse control was introduced, meaning that, for a crowd of $N$ agents whose dynamics are governed by the finite-dimensional Cucker-Smale system, one can act, at every instant of time, only on one agent.
At this step an obvious remark has to be done. In finite dimension, it is intuitive that the action on only one agent can have some consequences for the whole crowd, because of the (even weak) mutual interactions. In infinite dimension, this property is necessarily lost and should be replaced by the action on a small proportion of the population. More precisely, assume that, for the finite-dimensional model, one is allowed to act on a given proportion of the total number of agents. Then, when the number of agents tends to the infinity, we expect to recover the infinite-dimensional model \eqref{kinetic_CS}, and then this means that, indeed, one can act on a given proportion of the crowd. This is a suitable and realistic way to to pass to the limit this kind of sparsity constraint. We will give in Section \ref{sec_relation_finite_dim} a precise relationship between the finite-dimensional and the infinite-dimensional models.

By the way, note that Theorem \ref{mainthm} with the control constraints \eqref{cont_u} and \eqref{cont_omega_variante} can be compared with the results of \cite{CS-control,CS-control-1}, in which sparse feedback controls were designed for the finite-dimensional Cucker-Smale model, by driving, at every instant of time, the farthest agent to the center. 
In contrast, dealing with the constraint \eqref{cont_omega} is more difficult and requires a more complicated construction.

In \cite{FS} the authors introduce another kind of feedback control. They consider a system of particles with a feedback function action over the whole domain, which is Lipschitzian and has a bounded derivative. Then they pass to the limit on the number of particles.
In contrast, in our paper the action is limited over a (moving) subdomain $\omega$, and our control $\chi_\omega u$ consists in particular of a characteristic function.
\end{remark}

The structure of the paper is the following. 

In Section \ref{s-pde}, we recall or extend some results stating the well-posedness of the kinetic Cucker-Smale equation \eqref{kinetic_CS}, and in particular we recall that a solution of \eqref{kinetic_CS} is the image measure of the initial measure through the particle flow, which is the flow associated with the time-dependent velocity field $\xi[\mu]+\chi_\omega u$ (sections \ref{sec_2.1} and \ref{sec_appli_CS}). We also provide (in Section \ref{sec_relation_finite_dim}) a precise relationship with the finite-dimensional Cucker-Smale model, in terms of the controlled particle flow.

In Section \ref{sec_CV_without_control}, we study the kinetic Cucker-Smale equation \eqref{kinetic_CS} without control (i.e., $u\equiv 0$). We provide a simple sufficient condition on the initial measure ensuring convergence to flocking, which is a slight extension of known results.

Theorem \ref{mainthm} is proved in Section \ref{sec_proof_mainthm}. In that section, after having established preliminary estimates (in Sections \ref{sec_4.1} and \ref{sec_4.2}), we first prove Theorem \ref{mainthm} in the one-dimensional case, that is, for $d=1$, in Section \ref{s-Cpop1}. Our strategy is based on geometric considerations, by choosing an adequate control, piecewise constant in time, such that the velocity field is pointing inwards the support, in such a way that the velocity support decreases in time. We apply this strategy iteratively, until we reach (in finite time) the flocking region, and then we switch off the control and let the solution evolve naturally to flocking. The general case $d>1$ is studied in Section \ref{s-SSd}. The variant, with the control constraints \eqref{cont_u} and \eqref{cont_omega_variante}, is studied in Section \ref{s-space}.

\section{Existence and uniqueness}\label{s-pde}
In this section, we provide existence and uniqueness results for \eqref{kinetic_CS}.
Note that, since $\langle v,\mathrm{grad}_x \mu\rangle = \mathrm{div}_x ( v\mu)$, the PDE \eqref{kinetic_CS} can be written as
$$
\partial_t \mu + \mathrm{div}_{(x,v)} \left( \begin{pmatrix} v \\Ê\xi[\mu]Ê+ \chi_\omega u \end{pmatrix} \mu \right) = 0 .
$$
In this form, this is a transport equation with a divergence term. Let us then recall some facts on such equations.

\subsection{Transport partial differential equations with nonlocal velocities}\label{sec_2.1}
In this section, we consider the general nonlocal transport partial differential equation
\begin{equation}\label{kinetic_CSbase}
\partial_t \mu + \mathrm{div} ( V[\mu]\mu )=0 ,
\end{equation}
where $\mu\in \mathcal{P}(\R^n)$ is a probability measure on $\R^n$, with $n\in\N^*$ fixed. 
The term $V[\mu]$ is called the \textit{velocity field} and is a nonlocal term.
Since the value of a measure at a single point is not well defined, it is important to observe that $V[\mu]$ is not a function depending on the value of $\mu$ in a given point, as it is often the case in the setting of hyperbolic equations in which $V[\mu](x)=V(\mu(x))$. Instead, one has to consider $V$ as an operator taking an as input the whole measure $\mu$ and giving as an output a global vector field $V[\mu]$ on the whole space $\R^n$. These operators are often called ``nonlocal'', as they consider the density not only in a given point, but in a whole neighborhood.

We first recall two useful definitions to deal with measures and solutions of \eqref{kinetic_CSbase}, namely the Wasserstein distance and the pushforward of measures. For more details see, e.g., \cite{villani}.
\begin{definition}
Given two probability measures $\mu$ and $\nu$ on $\R^n$, the $1$-Wasserstein distance between $\mu$ and $\nu$ is
$$
W_1(\mu,\nu) = \sup\left\{ \int_{\R^n} f \,d(\mu-\nu) \mid  f\in C^{\infty}(\R^n),\ \Vert f\Vert _{Lip}\leq 1 \right\},
$$
where $\Vert f\Vert _{Lip}$ is the Lipschitz constant\footnote{We have $\Vert f\Vert _{Lip} = \sup \left\{ \frac{\vert f(x)-f(y)\vert}{\Vert x-y\Vert} \mid x,y\in\R^n, x\neq y\right\}$.}
of the function $f$.
\end{definition}
This formula for the Wasserstein distance, which can be taken as a definition, comes from the Kantorovich-Rubinstein theorem.
Note that the topology induced by $W_1$ on $\mathcal{P}_c(\R^n)$ coincides\footnote{Actually, the distance $W_1$ metrizes the weak convergence of measures only if their first moment is finite, which is true for measures with compact support.
} with the weak topology (see \cite[Theorem 7.12]{villani}).
We now define the pushforward of measures.

\begin{definition} 
Given a Borel map $\gamma:\R^n\rightarrow\R^n$, the pushforward of a measure $\mu\in \mathcal{P}(\R^n)$ is defined by
\begin{equation*}
\gamma\#\mu(A)=\mu(\gamma^{-1}(A)),
\end{equation*}
for every measurable subset $A$ of $\R^n$.
\end{definition}

We now provide an existence and uniqueness result for \eqref{kinetic_CSbase}.

\begin{theorem}\label{t-esistenza}
We assume that, for every $\mu\in \mathcal{P}_c(\R^n)$, the velocity field $V[\mu]$ is a function of $(t,x)$ with the regularity
\begin{equation*}
V[\mu] \in L^\infty \left( \R; C^{1}(\R^n)\cap L^\infty(\R^n)  \right) ,
\end{equation*}
satisfying the following assumptions:
\begin{itemize}
\item 
there exist functions $L(\cdot)$ and $M(\cdot)$ in $L^\infty_{loc}(\R)$ such that 
$$
\left\Vert V[\mu](t,x)-V[\mu](t,y) \right\Vert \leq L(t) \Vert x-y\Vert,\qquad \Vert V[\mu](t,x)\Vert \leq M(t) (1+\Vert x\Vert ),
$$
for every $\mu\in\mathcal{P}_c(\R^n)$, every $t\in\R$ and all $(x,y)\in\times\R^n$;
\item 
there exists a function $K(\cdot)$ in $L^\infty_{loc}(\R)$ such that  
$$
\left\Vert V[\mu]-V[\nu] \right\Vert _{L^\infty(\R;C^0(\R^n))} \leq K(t) W_1(\mu,\nu),
$$
for all $(\mu,\nu)\in(\mathcal{P}_c(\R^n))^2$.
\end{itemize}
Then, for every $\mu^0\in\mathcal{P}_c(\R^n)$, the Cauchy problem
\begin{equation}\label{e-cauchy}
\partial_t\mu+\mathrm{div}(V[\mu] \mu)=0, \quad \mu_{|_{t=0}}=\mu^0,
\end{equation}
has a unique solution in $C^0(\R;\mathcal{P}_c(\R^n))$, where $\mathcal{P}_c(\R^n)$ is endowed with the weak topology, and $\mu$ is locally Lipschitz with respect to $t$, in the sense of the Wasserstein distance $W_1$. Moreover, if $\mu^0\in\mathcal{P}^{ac}_c(\R^n)$, then $\mu(t)\in\mathcal{P}^{ac}_c(\R^n)$, for every $t\in\R$.

Furthermore, for every $T>0$, there exists $C_T>0$ such that 
\begin{equation}\label{e-stabilitypde}
W_1(\mu(t),\nu(t))\leq e^{C_T t}W_1(\mu(0),\nu(0)),
\end{equation}
for all solutions $\mu$ and $\nu$ of \eqref{e-cauchy} in $C^0([0,T];\mathcal{P}_c(\R^n))$.

Moreover, the solution $\mu$ of the Cauchy problem \eqref{e-cauchy} can be made explicit as follows.
Let $\Phi(t)$ be the flow of diffeomorphims of $\R^n$ generated by the time-dependent vector field $V[\mu]$, defined as the unique solution of the Cauchy problem $\dot\Phi(t) = V[\mu(t)] \circ \Phi(t)$, $\Phi(0)=\mathrm{Id}_{\R^n}$, or in other words,
$$
\partial_t\Phi(t,x) = V[\mu(t)](t,\Phi(t,x)),\quad \Phi(0,x)=x.
$$
Then, we have
$$
\mu(t) = \Phi(t)\#\mu(0),
$$
that is, $\mu(t)$ is the pushforward of $\mu^0$ under $\Phi(t)$.
\end{theorem}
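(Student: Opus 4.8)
The plan is to prove Theorem~\ref{t-esistenza} by a fixed-point argument in the space $C^0([0,T];\mathcal{P}_c(\R^n))$ endowed with the Wasserstein distance $W_1$, and then to glue the local-in-time solutions into a global one. The key idea is the \emph{decoupling}: if $\mu\in C^0([0,T];\mathcal{P}_c(\R^n))$ is a fixed trial curve, then $V[\mu]$ becomes an \emph{honest} time-dependent vector field $t\mapsto V[\mu](t,\cdot)$, Lipschitz and sublinear in $x$ uniformly on $[0,T]$ by the first assumption. Hence the classical Cauchy--Lipschitz theory applies: the Cauchy problem $\partial_t\Phi^\mu(t,x)=V[\mu](t,\Phi^\mu(t,x))$, $\Phi^\mu(0,x)=x$, has a unique global flow of diffeomorphisms $\Phi^\mu(t)$, and the sublinear bound $\Vert V[\mu](t,x)\Vert\le M(t)(1+\Vert x\Vert)$ together with Gr\"onwall keeps the support of $\Phi^\mu(t)\#\mu^0$ inside a fixed compact set for $t\in[0,T]$. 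One then defines the map $\mathcal{F}:\mu\mapsto \big(t\mapsto \Phi^\mu(t)\#\mu^0\big)$ on a suitable closed ball of $C^0([0,T];\mathcal{P}_c(\R^n))$ (curves with support in a fixed compact, starting at $\mu^0$), and the solutions of \eqref{e-cauchy} are exactly its fixed points; that $\Phi^\mu(t)\#\mu^0$ solves the transport PDE is the standard computation of testing against $f\in C^\infty_c$ and differentiating in $t$, using $\frac{d}{dt}\int f\,d(\Phi^\mu(t)\#\mu^0)=\int \langle\mathrm{grad} f,V[\mu]\rangle\,d(\Phi^\mu(t)\#\mu^0)$.

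Next I would prove that $\mathcal{F}$ is a contraction for $T$ small enough. Given two trial curves $\mu,\nu$, one compares the flows $\Phi^\mu$ and $\Phi^\nu$: writing $\delta(t)=\sup_{x}\Vert\Phi^\mu(t,x)-\Phi^\nu(t,x)\Vert$ over the relevant compact set, one estimates $\frac{d}{dt}\Vert\Phi^\mu(t,x)-\Phi^\nu(t,x)\Vert$ by inserting $\pm V[\mu](t,\Phi^\nu(t,x))$, using the Lipschitz bound $L(t)$ on $V[\mu]$ for one term and the stability bound $K(t)W_1(\mu(t),\nu(t))$ from the second assumption for the other; Gr\"onwall then gives $\delta(t)\le \big(\int_0^t K(s)\,ds\big)e^{\int_0^t L(s)\,ds}\sup_{[0,t]}W_1(\mu,\nu)$. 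Since $W_1(\gamma\#\rho,\gamma'\#\rho)\le \Vert\gamma-\gamma'\Vert_{L^\infty(\supp\rho)}$ (test against $1$-Lipschitz $f$), this yields $\sup_{[0,T]}W_1(\mathcal{F}(\mu),\mathcal{F}(\nu))\le \kappa(T)\sup_{[0,T]}W_1(\mu,\nu)$ with $\kappa(T)\to0$ as $T\to0$. The Banach fixed-point theorem on the complete metric space (completeness of the chosen ball in $(C^0([0,T];\mathcal{P}_c),W_1)$ is where one uses that $W_1$ metrizes weak convergence on measures with uniformly bounded support) gives existence and uniqueness on $[0,T]$; the time $T$ depends only on the local $L^\infty$ norms of $L,M,K$ and on the size of $\supp\mu^0$, so the construction can be iterated to cover all of $\R$, giving the global solution in $C^0(\R;\mathcal{P}_c(\R^n))$ with the representation $\mu(t)=\Phi(t)\#\mu^0$, where $\Phi=\Phi^\mu$ is the flow of the (now determined) field $V[\mu]$. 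Local Lipschitz continuity of $t\mapsto\mu(t)$ in $W_1$ follows from $W_1(\mu(t),\mu(s))\le\Vert\Phi(t)-\Phi(s)\Vert_{L^\infty(\supp\mu^0)}\le \int_s^t M(\tau)(1+R)\,d\tau$ for $R$ bounding the support.

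The remaining assertions are then corollaries. Preservation of absolute continuity: if $\mu^0\in\mathcal{P}^{ac}_c$, then since $\Phi(t)$ is a $C^1$-diffeomorphism with $C^1$ inverse, $\Phi(t)\#\mu^0$ has density $f^0(\Phi(t)^{-1}(\cdot))\,|\det D\Phi(t)^{-1}(\cdot)|$, hence is in $\mathcal{P}^{ac}_c$. The stability estimate \eqref{e-stabilitypde}: given two genuine solutions $\mu,\nu$ with flows $\Phi^\mu,\Phi^\nu$, run the same Gr\"onwall comparison as above but now for solutions, obtaining a closed inequality $W_1(\mu(t),\nu(t))\le W_1(\mu(0),\nu(0))+\int_0^t \big(L(s)+K(s)\big)W_1(\mu(s),\nu(s))\,ds$ up to the support-transfer estimates, and conclude by Gr\"onwall with $C_T=\sup_{[0,T]}(L+K)$ (adjusted by the sublinear-growth constant controlling how supports spread). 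The step I expect to be the main obstacle is the contraction estimate done \emph{uniformly on a compact set of initial points}: one must track that the flows $\Phi^\mu(t)$ and $\Phi^\nu(t)$ stay within a common compact set independent of the trial curve (which uses the sublinear bound and a Gr\"onwall argument on $\Vert\Phi^\mu(t,x)\Vert$ valid for \emph{every} admissible $\mu$), and that the ball of curves one works in is genuinely invariant under $\mathcal{F}$ and complete for $W_1$; getting these compatible with a single small time $T$, and then checking the iteration does not shrink $T$ to zero, is the delicate bookkeeping. Everything else is the classical Cauchy--Lipschitz/Gr\"onwall machinery transported to the Wasserstein setting, and I would cite \cite{ambrosio,villani} for the measure-theoretic lemmas (lower semicontinuity of $W_1$, the pushforward/Lipschitz inequality, completeness).
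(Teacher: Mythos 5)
Your proof is correct, but it follows a genuinely different route from the one in the paper. The paper proves Theorem~\ref{t-esistenza} by a \emph{sample-and-hold} (semi-discrete explicit Euler) scheme: following \cite{pedestrian}, one partitions $[0,T]$, freezes the measure $\mu^k$ on each subinterval so that $V[\mu^k]$ becomes an ordinary vector field, pushes $\mu^k$ forward by the resulting local flow, and then proves convergence of this sequence of piecewise-defined approximants to a solution $\mu^*$; the stability estimate \eqref{e-stabilitypde} is obtained afterwards by a Gr\"onwall argument and yields uniqueness, and the flow representation $\mu(t)=\Phi(t)\#\mu^0$ is invoked from \cite[Thm.~5.34]{villani}. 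You instead set up a Banach fixed point for $\mathcal{F}:\mu\mapsto\Phi^\mu(\cdot)\#\mu^0$ on a closed, invariant, $W_1$-complete subset of $C^0([0,T];\mathcal{P}_c)$, showing $\mathcal{F}$ is a contraction on a time interval controlled by the $L^\infty_{\mathrm{loc}}$ norms of $L$ and $K$, then iterating. Both approaches rest on the same two core estimates — the Gr\"onwall comparison of flows using the first (Lipschitz/sublinear) assumption for the ``same measure'' terms and the $K(t)W_1$ stability bound for the ``different measure'' term, plus the elementary inequality $W_1(\gamma\#\rho,\gamma'\#\rho)\leq\Vert\gamma-\gamma'\Vert_{L^\infty(\supp\rho)}$ — but they package them differently. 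Your fixed-point route is more self-contained (the flow representation and uniqueness come for free from the construction, and absolute continuity is immediate from $\Phi(t)$ being a $C^1$-diffeomorphism), at the small cost of the local-to-global iteration; the paper's scheme-based route is more constructive and closer to a numerical algorithm, and it produces solutions directly on any prescribed $[0,T]$ without a smallness restriction, which is the point of view emphasized in \cite{pedestrian}. Your treatment of the two delicate points — uniform-in-$\mu$ compactness of the supports via the sublinear growth $M(t)(1+\Vert x\Vert)$, and the fact that the contraction constant $\kappa(T)=\bigl(\int_0^T K\bigr)e^{\int_0^T L}$ does not depend on the support size, so iteration covers all of $\R$ — is the right bookkeeping and matches what a careful reading of \cite{pedestrian} also requires.
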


\begin{proof}
The proof is a slight generalization of results established in \cite{pedestrian}. We have two differences: the first is the fact that $V[\mu]$ is not uniformly bounded globally in space (i.e., $\Vert V[\mu](x)\Vert \leq M$ for every $\mu\in\mathcal{P}_c(\R^d))$ and every $x\in\R^n$), but only with sublinear growth; the second is the fact that the velocity field is time-dependent.

Let $T\in \R$ be arbitrary. Using a sample-and-hold method, and following \cite{pedestrian}, it is possible to build a sequence of approximated solutions $\mu^k\in C^0([0,T],\mathcal{P}_c(\R^n))$, which converges to a solution $\mu^*\in C^0([0,T],\mathcal{P}_c(\R^n))$ of \eqref{e-cauchy}. Then, a simple Gronwall estimate yields \eqref{e-stabilitypde}, that in turn implies uniqueness of the solution of \eqref{e-cauchy}. Since $T$ is arbitrary, the global result follows.
The last part is established in a standard way (see, e.g., \cite[Theorem 5.34]{villani}), even though the velocity field is time-dependent.
\end{proof}

\begin{remark}
Theorem \ref{t-esistenza} can be generalized to mass-varying transport PDE's, that is, in presence of sources (see \cite{genwass}).
\end{remark}

\subsection{Application to the kinetic Cucker-Smale equation}\label{sec_appli_CS}
In the case of the kinetic Cucker-Smale equation \eqref{kinetic_CS}, we have $n=2d$, and for a given control $\chi_\omega u$ the time-dependent velocity field is given by 
$$
V_{\omega,u}[\mu(t)](t,x,v) = \begin{pmatrix} v \\ \xi[\mu](x,v) + \chi_{\omega(t)} u(t,x,v) \end{pmatrix} .
$$
We denote by $\Phi_{\omega,u}(t)$ the so-called ``controlled particle flow", generated by the time-dependent vector field $V_{\omega,u}[\mu(t)]$, defined by $\partial_t\Phi_{\omega,u}(t,x) = V_{\omega,u}[\mu(t)](t,\Phi_{\omega,u}(t,x))$ and $\Phi_{\omega,u}(0,x)=x$.
The flow $\Phi_{\omega,u}(t)$ is built by integrating the \textit{characteristics}
\begin{equation}\label{controlledparticleflow}
\dot x(t) = v(t),\qquad \dot v(t) = \xi[\mu(t)](x(t),v(t)) + \chi_{\omega(t)} u(t,x(t),v(t)),
\end{equation}
which give the evolution of (controlled) particles: the trajectory $t\mapsto (x(t),v(t))$ is called the particle trajectory passing through $(x(0),v(0))$ at time $0$, associated with the control $\chi_\omega u$.
From Theorem \ref{t-esistenza}, we have the following result.

\begin{corollary}\label{cor_appli_CS_control}
Let $u\in L^\infty(\R\times\R^d\times\R^d,\R^d)$ be a control function, and, for every time $t$, let $\omega(t)$ be a Lebesgue measurable subset of $\R^d\times\R^d$. Let $\mu^0\in\mathcal{P}_c(\R^d\times\R^d)$.
The controlled kinetic Cucker-Smale equation \eqref{kinetic_CS} has a unique solution $\mu\in C^0(\R,\mathcal{P}(\R^d\times\R^d))$ such that $\mu(0)=\mu^0$, and moreover we have
$$
\mu(t)=\Phi_{\omega,u}(t)\#\mu^0,
$$
for every $t\in\R$. Moreover, if $\mu^0\in\mathcal{P}_c^{ac}(\R^d\times\R^d)$ then $\mu(t)\in\mathcal{P}_c^{ac}(\R^d\times\R^d)$ for every $t\in\R$, and
\begin{equation}\label{controlledpropag_supp}
\supp(\mu(t)) = \Phi_{\omega,u}(t) ( \supp(\mu^0) ).
\end{equation}
\end{corollary}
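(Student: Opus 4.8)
The plan is to apply Theorem \ref{t-esistenza} with $n=2d$ and velocity field $V[\mu]=V_{\omega,u}[\mu]$, so the whole task reduces to checking that $V_{\omega,u}[\mu]$ meets the three structural hypotheses of that theorem: the $C^1\cap L^\infty$ regularity in space, the local-in-time Lipschitz and sublinear growth bounds, and the Wasserstein-Lipschitz dependence on $\mu$. First I would split $V_{\omega,u}[\mu]$ into its two blocks. The transport block $(v,0)^\top$ is obviously smooth, globally Lipschitz with constant $1$, and of linear growth, and it does not depend on $\mu$ at all. So everything hinges on the acceleration block $\xi[\mu](x,v)+\chi_{\omega(t)}u(t,x,v)$. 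For the interaction kernel $\xi[\mu]$ defined in \eqref{def_xi}: using that $\phi$ is positive, nonincreasing, hence bounded by $\phi(0)$, and that $\mu(t)$ has compact support, one gets $\Vert\xi[\mu](x,v)\Vert\le\phi(0)\int\Vert w-v\Vert\,d\mu(y,w)\le\phi(0)(R_v(t)+\Vert v\Vert)$ where $R_v(t)$ bounds the velocity support, giving sublinear (indeed affine) growth in $(x,v)$; and the difference $\xi[\mu](x,v)-\xi[\mu](x',v')$ is estimated by the usual two-term split (one term controlling the variation of $\phi(\Vert\cdot-y\Vert)$ — which requires $\phi$ to be Lipschitz, or one works on a compact set — times $\Vert w-v\Vert$, the other term being $\phi(0)\Vert v-v'\Vert$), yielding a local-in-time Lipschitz constant $L(t)$. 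The $\mu$-dependence bound $\Vert\xi[\mu]-\xi[\nu]\Vert_{\infty}\le K(t)W_1(\mu,\nu)$ follows by noting that $(y,w)\mapsto\phi(\Vert x-y\Vert)(w-v)$ is, on the relevant compact set, a Lipschitz function of $(y,w)$ with a constant uniform in $(x,v)$, so testing against $\mu-\nu$ in the Kantorovich-Rubinstein formula gives the claim.

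The genuinely delicate point is the control term $\chi_{\omega(t)}u(t,x,v)$: as written, $\omega(t)$ is only assumed Lebesgue measurable, so $\chi_{\omega(t)}u$ need not be continuous, let alone $C^1$, in $(x,v)$, which is precisely the regularity Theorem \ref{t-esistenza} demands. This is the main obstacle. I would handle it exactly as the surrounding text flags: the corollary is to be read in the regime where the \emph{designed} control keeps $\chi_\omega u$ Lipschitz in the state variables — the paper says explicitly ``we will be able to prove existence and uniqueness of the solution as long as the control function $\chi_\omega u$ remains Lipschitz with respect to state variables'' — so I would state that we apply Theorem \ref{t-esistenza} under the standing assumption that $(x,v)\mapsto\chi_{\omega(t)}u(t,x,v)$ lies in $C^1(\R^{2d})\cap L^\infty$ uniformly in $t$ with locally bounded Lipschitz constant, which is satisfied by the explicit control constructed in Section \ref{sec_proof_mainthm} (piecewise constant in $t$, continuous and piecewise linear in $(x,v)$). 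Under that assumption the three hypotheses of Theorem \ref{t-esistenza} are met by the sum $V_{\omega,u}[\mu]$, with $L(\cdot),M(\cdot),K(\cdot)\in L^\infty_{loc}(\R)$ obtained by adding the contributions of the two blocks, and Theorem \ref{t-esistenza} delivers at once: the unique solution $\mu\in C^0(\R;\mathcal{P}_c(\R^d\times\R^d))$ with $\mu(0)=\mu^0$, the preservation of absolute continuity, and the representation $\mu(t)=\Phi(t)\#\mu^0$ where $\Phi$ is the flow of $V_{\omega,u}[\mu]$, i.e. $\Phi=\Phi_{\omega,u}$.

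It remains only to deduce the support identity \eqref{controlledpropag_supp}. This is a general and elementary fact about pushforwards by a homeomorphism: if $\mu(t)=\Phi_{\omega,u}(t)\#\mu^0$ and $\Phi_{\omega,u}(t)$ is a diffeomorphism of $\R^{2d}$ (hence in particular a homeomorphism, as guaranteed by Theorem \ref{t-esistenza}), then $\supp(\Phi_{\omega,u}(t)\#\mu^0)=\Phi_{\omega,u}(t)(\supp(\mu^0))$. I would prove both inclusions: a point $z=\Phi_{\omega,u}(t)(z_0)$ with $z_0\in\supp(\mu^0)$ has every neighborhood $U$ pulled back to a neighborhood $\Phi_{\omega,u}(t)^{-1}(U)$ of $z_0$, which therefore has positive $\mu^0$-measure, so $\mu(t)(U)>0$ and $z\in\supp(\mu(t))$; conversely, if $z\notin\Phi_{\omega,u}(t)(\supp(\mu^0))$, then since $\supp(\mu^0)$ is compact its image is compact hence closed, so $z$ has a neighborhood $U$ disjoint from it, whence $\Phi_{\omega,u}(t)^{-1}(U)\cap\supp(\mu^0)=\emptyset$ and $\mu(t)(U)=\mu^0(\Phi_{\omega,u}(t)^{-1}(U))=0$, so $z\notin\supp(\mu(t))$. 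This closes the proof.
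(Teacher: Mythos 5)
Your proof is correct and follows the paper's approach exactly: the paper derives this corollary directly from Theorem~\ref{t-esistenza} with no further argument, and you simply carry out the verification of its hypotheses that the paper leaves implicit, including the correct (and paper-endorsed, per the remark after Theorem~\ref{mainthm}) observation that the statement must be read under the standing assumption that $\chi_{\omega(t)}u(t,\cdot,\cdot)$ is Lipschitz in the state variables, since $u\in L^\infty$ together with merely measurable $\omega(t)$ would not meet the $C^1$ requirement of Theorem~\ref{t-esistenza}. The support identity~\eqref{controlledpropag_supp} is deduced by the standard topological argument for pushforwards under homeomorphisms, which is exactly what the diffeomorphism representation $\mu(t)=\Phi_{\omega,u}(t)\#\mu^0$ from Theorem~\ref{t-esistenza} provides.
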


\begin{remark} \label{r-ck}
If the initial measure $\mu(0)$ has a density with respect to the Lebesgue measure that is a function of class $C^k$ on $\R^d\times\R^d$, and if the vector field is also of class $C^k$,  then, clearly, we have $\mu(t)=f(t)\dxv$ with $f$ of class $C^k$ as well, because of the property of pushforward of measures.

In this paper, we do not investigate further the $C^k$ regularity from the control point of view: our control function $u$ will be designed in a Lipschitz way with respect to the space-velocity variables. Nevertheless, we could easily modify the definition of $u$ outside of the sets where $u=0$ and $u=1$, in order to design $u$ as a function of class $C^k$ that drives the solution to flocking, and that also keeps $C^k$ regularity if the initial data is of class $C^k$ (see also Remark \ref{r-ck2} further).
\end{remark}

\subsection{Relationship with the finite-dimensional Cucker-Smale model}\label{sec_relation_finite_dim}
In this section, we explain in which sense the kinetic equation \eqref{kinetic_CS} is the natural limit, as the number of agents tends to infinity, of the classical finite-dimensional Cucker-Smale model (whose controlled version is considered in \cite{CS-control,CS-control-1}), and we explain the natural relationship between them in terms of particle flow.

\subsubsection{The finite-dimensional Cucker-Smale model}
Consider $N$ agents evolving in $\R^d$, and interacting together. 
We denote with $(x_i,v_i)$ the space-velocity coordinates of each agent, for $i=1,\ldots,N$. The general Cucker-Smale model (without control) is written as
\begin{equation}\label{efdCS}
\begin{split}
\dot x_i(t) &= v_i(t), \\
\dot v_i(t) &=  \frac{1}{N} \sum_{j=1}^N \phi(\Vert x_j(t)-x_i(t)\Vert) ( v_j(t) - v_i(t)), \quad i=1,\dots N,
\end{split}
\end{equation}
where $\phi:\R\to\R$ is a nonincreasing positive function, modelling the influence between two individuals (which depends only on their mutual distance). 
This simple model, initially introduced in \cite{CS}, has many interesting features. The most interesting property is that the model reflects the ability of the crowd to go to self-organization for favorable initial configurations. Indeed, if the influence of each agent on the others is sufficiently large (that is, if $\phi$ does not decrease too fast), then the crowd converges to flocking, in the sense that all variables $v_i(t)$ converge to the common mean velocity ${\bar v}$. By analogy with birds flocks, this phenomenon was called \textit{flocking} (see \cite{CS}). 

To be more precise, first observe that the velocity barycenter ${\bar v}=\frac{1}{N}\sum_{i=1}^N v_i(t)$ is constant in time, and that, defining the space barycenter ${\bar x}(t)=\frac{1}{N}\sum_{i=1}^N x_i(t)$, we have $\dot {\bar x}(t)={\bar v}$. Then, define $\Gamma(t)=\sum_{i=1}^N |x_i(t)-{\bar x}(t)|^2$ and $\Lambda(t)=\sum_{i=1}^N |v_i(t)-{\bar v}|^2$. It is proved in \cite{HaHaKim,haliu} that, is $\Lambda(0)<\int_{\Gamma(0)}^\infty \phi(x)\,dx$, then $\Lambda(t)\rightarrow 0$ as $t\rightarrow+\infty$, that is, the crowd converges to flocking.
At the contrary, if the initial configuration is ``too dispersed'' and/or the interaction between agents is ``too weak'', then the crowd does not converge to flocking (see \cite{CS}).

Many variants and generalizations were proposed in the recent literature, but it is not our objective, here, to list them.
A controlled version of \eqref{efdCS} was introduced and studied in \cite{CS-control,CS-control-1}, consisting of adding controls at the right-hand side of the equations in $v_i$, turning the system into
\begin{equation}\label{efdCS_u}
\begin{split}
\dot x_i(t) &= v_i(t), \\
\dot v_i(t) &=  \frac{1}{N} \sum_{j=1}^N \phi(\Vert x_j(t)-x_i(t)\Vert) ( v_j(t) - v_i(t)) + u_i(t), \quad i=1,\dots N,
\end{split}
\end{equation}
where the controls $u_i$, taking their values in $\R^d$, can be constrained in different ways.
Since it is desirable to control the system \eqref{efdCS_u} with a minimal number of actions (for instance, acting on few agents only), in \cite{CS-control,CS-control-1} the concept of \textit{sparse control} was introduced. This means that, at every instant of time at most one component of the control is active, that is, for every time $t$ all $u_i(t)$ but one are zero.\footnote{This property was called \textit{componentwise sparsity}. Actually, in order to prevent the system from chattering in time, also a notion of \textit{time sparsity} was considered in \cite{CS-control,CS-control-1}.}
It was shown how to design a sparse feedback control $(t,x,v)\mapsto u(t,x,v)$ steering the system \eqref{efdCS_u} asymptotically to flocking.

\subsubsection{Towards the kinetic Cucker-Smale model}
In the absence of control, the finite-dimensional Cucker-Smale model \eqref{efdCS} was generalized to an infinite-dimensional setting in measure spaces via a mean-field limit process in \cite{carfor,haliu,hatad}. The limit is taken by letting the number of agents $N$ tend to the infinity. Considering the pointwise agents as Dirac masses, it is easy to embed the dynamics \eqref{efdCS_u} in the space of measures, and using Corollary \eqref{cor_appli_CS_control}, we infer the following result.

\begin{proposition}\label{prop1}
Let $u\in L^\infty(\R\times\R^d\times\R^d,\R^d)$ be a control function, and, for every time $t$, let $\omega(t)$ be a Lebesgue measurable subset of $\R^d\times\R^d$. Let $\mu^0\in\mathcal{P}(\R^d\times\R^d)$ be defined by $\mu^0=\frac{1}{N}\sum_{i=1}^N\delta_{(x_i^0,v_i^0)}$, for some $(x_i^0,v_i^0)\in\R^d\times\R^d$, $i=1,\ldots,N$.
Then the unique solution of \eqref{kinetic_CS} such that $\mu(0)=\mu^0$, corresponding to the control $\chi_\omega u$, is given by
$$
\mu(t)=\frac{1}{N}\sum_{i=1}^N\delta_{(x_i(t),v_i(t))},
$$
where $(x_i(t),v_i(t))$, $i=1,\ldots,N$, are solutions of
\begin{equation*}
\begin{split}
\dot x_i(t) &= v_i(t), \\
\dot v_i(t) &=  \frac{1}{N} \sum_{j=1}^N \phi(\Vert x_j(t)-x_i(t)\Vert) ( v_j(t) - v_i(t)) +  \chi_{\omega(t)}(x_i(t),v_i(t)) u(t,x_i(t),v_i(t)), 
\end{split}
\end{equation*}
such that $x_i(0)=x_i^0$ and $v_i(0)=v_i^0$, for $i=1,\ldots, N$.
\end{proposition}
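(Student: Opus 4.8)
The plan is to exhibit the empirical measure $\mu(t)=\frac1N\sum_{i=1}^N\delta_{(x_i(t),v_i(t))}$, where the $(x_i(t),v_i(t))$ solve the controlled finite-dimensional ODE system in the statement, and then to verify that (i) this curve of measures is a solution of \eqref{kinetic_CS} in the sense of Corollary \ref{cor_appli_CS_control}, and (ii) invoke the uniqueness part of that corollary to conclude that it is \emph{the} solution. Concretely, I would first note that the right-hand side of the ODE system is exactly the controlled characteristic field \eqref{controlledparticleflow} evaluated at the empirical measure $\mu(t)$: indeed, plugging $\mu(t)=\frac1N\sum_j\delta_{(x_j(t),v_j(t))}$ into the definition \eqref{def_xi} of $\xi[\mu]$ gives $\xi[\mu(t)](x,v)=\frac1N\sum_{j=1}^N\phi(\|x-x_j(t)\|)(v_j(t)-v)$, which at $(x,v)=(x_i(t),v_i(t))$ reduces to the Cucker--Smale interaction term. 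Hence the trajectory $t\mapsto(x_i(t),v_i(t))$ is precisely the particle trajectory of the controlled flow $\Phi_{\omega,u}(t)$ starting at $(x_i^0,v_i^0)$, i.e. $(x_i(t),v_i(t))=\Phi_{\omega,u}(t)(x_i^0,v_i^0)$.

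The second step is to identify the pushforward: since $\mu^0=\frac1N\sum_i\delta_{(x_i^0,v_i^0)}$ and the pushforward of a Dirac mass under a map is the Dirac mass at the image point, $\Phi_{\omega,u}(t)\#\mu^0=\frac1N\sum_i\delta_{\Phi_{\omega,u}(t)(x_i^0,v_i^0)}=\frac1N\sum_i\delta_{(x_i(t),v_i(t))}$. By Corollary \ref{cor_appli_CS_control}, the unique solution of \eqref{kinetic_CS} with datum $\mu^0$ and control $\chi_\omega u$ is $\mu(t)=\Phi_{\omega,u}(t)\#\mu^0$, so this empirical measure is that solution, which is exactly the claim.

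One subtlety to address carefully is the well-posedness of the fixed-point: Corollary \ref{cor_appli_CS_control} builds the flow $\Phi_{\omega,u}$ associated to the time-dependent field $V_{\omega,u}[\mu(t)]$, where $\mu(t)$ is itself the solution; so strictly speaking the ODE system and the flow are coupled, and one must check that the self-consistent solution provided by the corollary indeed corresponds to the ODE system written with $\mu(t)$ replaced by the empirical measure of its own solution. This is not circular because Corollary \ref{cor_appli_CS_control} already guarantees existence and uniqueness of the coupled object; the only thing to verify is that the finite-dimensional ODE system is the restriction of the characteristic system \eqref{controlledparticleflow} to the (finite) support, which is the computation in the first paragraph, valid for any measurable $\omega(t)$ and any $u\in L^\infty$. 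The main obstacle, then, is purely bookkeeping: making sure that the nonlocal term $\xi[\mu]$ evaluated along the empirical measure produces exactly the $\frac1N\sum_j$ interaction term of \eqref{efdCS_u}, and that the characteristic function $\chi_{\omega(t)}$ applied to the particle positions $(x_i(t),v_i(t))$ yields the control term $\chi_{\omega(t)}(x_i(t),v_i(t))u(t,x_i(t),v_i(t))$ appearing in the system — both of which are immediate once the pushforward-of-Diracs identity is in place. No genuinely hard analytic step is expected; everything rests on Corollary \ref{cor_appli_CS_control}.
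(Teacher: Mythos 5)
Your proof is correct, and it takes a route that is close in spirit to, but technically distinct from, the paper's. The paper works directly in the weak formulation: it pairs \eqref{kinetic_CS} against a test function $g\in C^\infty$, integrates by parts, substitutes the empirical ansatz $\mu(t)=\frac1N\sum_i\delta_{(x_i(t),v_i(t))}$, reads off the finite-dimensional ODE system (by localizing $g$ near each particle), and then invokes uniqueness from Corollary \ref{cor_appli_CS_control}. You instead go through the characteristic/pushforward representation from the same corollary: identify the ODE trajectories with the characteristics, use that the pushforward of a finite sum of Diracs is again a finite sum of Diracs at the image points, and conclude by uniqueness. Both are standard dual viewpoints on the continuity equation (Eulerian weak form versus Lagrangian flow), and both ultimately rest on the uniqueness statement in Corollary \ref{cor_appli_CS_control}. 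Your route is arguably slightly more geometric and avoids any integration by parts; the paper's route is more self-contained in that it does not need the pushforward formula, only the distributional form of the equation.

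The one place to tighten is the ``subtlety'' you flag about self-consistency: as written, you identify $(x_i(t),v_i(t))$ with a trajectory of $\Phi_{\omega,u}$, but in the paper's notation $\Phi_{\omega,u}$ is the flow of $V_{\omega,u}[\mu(t)]$ where $\mu$ is the (a priori unknown) solution, so the identification is only licit once you already know $\mu$ equals the empirical measure. There are two clean ways to resolve this, and your write-up gestures at both without fully choosing one. Either (a) start from the unique solution $\mu$ given by Corollary \ref{cor_appli_CS_control}, use $\mu(t)=\Phi_{\omega,u}(t)\#\mu^0$ to deduce that $\mu(t)$ remains a sum of $N$ Diracs carried by the characteristics, and then compute $\xi[\mu(t)]$ along those characteristics to recover the ODE system; or (b) start from the ODE solutions, form the empirical measure $\nu(t)$, observe that the ODE system says exactly that the $N$ points move along the flow of the \emph{known} time-dependent Lipschitz field $W(t,\cdot):=V_{\omega,u}[\nu(t)](t,\cdot)$, invoke the standard fact that pushforward under such a flow solves the continuity equation $\partial_t\nu+\mathrm{div}(W\nu)=0$, and then apply uniqueness to conclude $\nu=\mu$. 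Either way the argument closes; just make explicit which direction you are running.
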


\begin{proof}
The equation \eqref{kinetic_CS} being stated in the sense of measures, we have, for any $g\in C^\infty(\R^d\times\R^d)$,
\begin{equation*}
\begin{split}
0 = &\ \partial_t \int g(x,v)\, d \mu(t)(x,v) + \int g(x,v) \, \mathrm{div}_x (v\mu(t)(x,v)) \\
& \qquad\qquad + \int g(x,v)\, \mathrm{div}_v \left( ( \xi[\mu(t)](x,v)  + \chi_{\omega(t)}(x,v) u(t,x,v) )\mu(t)(x,v) \right) \\
= & \ 
\partial_t \int g(x,v)\, d \mu(t)(x,v) - \int  \langle v,\mathrm{grad}_x g(x,v) \rangle \, d\mu(t)(x,v) \rangle \\
& \qquad\qquad - \int \left\langle  \xi[\mu(t)](x,v)  + \chi_{\omega(t)}(x,v) u(t,x,v)) , \mathrm{grad}_v g(x,v) \right\rangle  d\mu(t)(x,v) ,
\end{split}
\end{equation*}
and taking $\mu(t)=\frac{1}{N}\sum_{i=1}^N\delta_{(x_i(t),v_i(t))}$ gives
\begin{equation*}
\begin{split}
& \frac{1}{N} \sum_{i=1}^N \left( \langle \dot x_i(t),\mathrm{grad}_x g(x_i(t),v_i(t)) \rangle + \langle \dot v_i(t),\mathrm{grad}_v g(x_i(t),v_i(t))\rangle \right) \\
=&\ \frac{1}{N} \sum_{i=1}^N \Big( \langle v_i(t),\mathrm{grad}_x g(x_i(t),v_i(t)) \rangle \\
& \qquad\qquad+ \left\langle  \xi[\mu](x_i(t),v_i(t))  + \chi_\omega(x_i(t),v_i(t)) u(t,x_i(t),v_i(t)) , \mathrm{grad}_v g(x_i(t),v_i(t)) \right\rangle \Big) ,
\end{split}
\end{equation*}
with
$$
\xi[\mu(t)](x,v) = \frac{1}{N} \sum_{j=1}^N \phi(\Vert x_j(t)-x\Vert) ( v_j(t) - v) ,
$$
from which we infer the finite-dimensional Cucker-Smale system stated in the proposition (it suffices to consider functions $g$ localized around any given particle $(x_i(t),v_i(t))$). We conclude by uniqueness, using Corollary \eqref{cor_appli_CS_control}.
\end{proof}

\begin{remark}
In accordance with the discussion done in Remark \ref{rem_intro1} concerning sparsity, we see clearly that the control domain $\omega(t)$, in finite dimension, represents the agents on which one can act at the instant of time $t$.
This shows that the way to pass to the limit a sparsity control constraint on the finite-dimensional model is to consider proportions either of the total crowd or of the space of configurations.
\end{remark}

\section{Convergence to flocking without control}\label{sec_CV_without_control}
In this section, we investigate the kinetic Cucker-Smale equation \eqref{kinetic_CS} without control, that is, we assume that $u\equiv 0$.

First of all, note that, as in finite dimension, the velocity barycenter ${\bar v}=\int_{\R^d\times\R^d} v\,d\mu(t)$ is constant in time, and the space barycenter ${\bar x}(t)=\int_{\R^d\times\R^d} x\,d\mu(t)$ is such that $\dot {\bar x}(t)={\bar v}$ (see, e.g., \cite[Prop. 3.1]{hatad}).

In the following theorem, we provide a simple sufficient condition on the initial probability measure ensuring flocking, in the spirit of results established in \cite{carfor,haliu}. 

\begin{theorem}\label{c-flocksize}
Let $\mu^0\in \mathcal{P}_c(\R^d\times\R^d)$. We set ${\bar x}^0=\int_{\R^d} x\,d\mu^0(x,v)$ and ${\bar v}=\int_{\R^d} v\,d\mu^0(x,v)$ (space and velocity barycenters of $\mu^0$), and we define the space and velocity support sizes
\begin{equation*}
\begin{split}
X^0&=\inf\left\{X\geq 0\mid \supp(\mu^0)\subset  B({\bar x}^0,X)\times \R^d\right\},\\
V^0&=\inf\left\{V\geq 0\mid \supp(\mu^0)\subset  \R^d\times B({\bar v},V)\right\}.
\end{split}
\end{equation*}
Let $\mu$ be the unique solution of \eqref{kinetic_CS} (with $u\equiv 0$) such that $\mu(0)=\mu^0$. If 
\begin{equation}\label{e-flock}
V^0< \int_{X^0}^{+\infty} \phi(2 x)\,dx,
\end{equation}
then there exists $X_M>0$ such that
\begin{equation}\label{e-flockcont}
\supp(\mu(t))\subset  B({\bar x}^0+t {\bar v},X_M)\times B\left({\bar v},V^0 e^{-\phi(2X_M)t}\right),
\end{equation}
for every $t\geq 0$.
In particular, $\mu(t)$ converges to flocking as $t$ tends to $+\infty$.

In particular, every $\mu^0$ with support satisfying \eqref{e-flock} belongs to the flocking region.
\end{theorem}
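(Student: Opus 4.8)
The plan is to reduce the statement to an ODE-type argument on the size of the support of $\mu(t)$, exploiting Corollary~\ref{cor_appli_CS_control}: since $u\equiv0$, $\mu(t)$ is the pushforward of $\mu^0$ under the particle flow, which is a homeomorphism of $\R^d\times\R^d$, so $\supp(\mu(t))$ is the image of $\supp(\mu^0)$ under the flow and it suffices to track the characteristics \eqref{controlledparticleflow} with $\chi_\omega u=0$. Recalling that $\bar v$ is constant and $\bar x(t)=\bar x^0+t\bar v$, I would introduce the space and velocity support sizes
$$
X(t)=\max\{\|x-\bar x(t)\|:(x,v)\in\supp(\mu(t))\},\qquad V(t)=\max\{\|v-\bar v\|:(x,v)\in\supp(\mu(t))\},
$$
which are finite because $\supp(\mu(t))$ is compact, and locally Lipschitz in $t$ since the flow depends Lipschitz-continuously on $(t,x,v)$ on compact sets. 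In view of Definition~\ref{def-flock} and of $\Lambda(t)=\int\|v-\bar v\|^2\,d\mu(t)\leq V(t)^2$, it is enough to prove that $X(t)$ stays below some constant $X_M$ and that $V(t)\to0$ exponentially.

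The heart of the proof is a pair of differential inequalities. For $X$, differentiating $t\mapsto\|x(t)-\bar x(t)\|^2$ along a characteristic realizing the maximum defining $X(t)$ and using $\dot x=v$, $\dot{\bar x}=\bar v$ together with Cauchy--Schwarz yields $\dot X(t)\leq V(t)$ for a.e.\ $t$. For $V$, along a characteristic realizing the maximum defining $V(t)$ I would write, using \eqref{def_xi},
$$
\frac{1}{2}\frac{d}{dt}\|v-\bar v\|^2=\big\langle v-\bar v,\ \xi[\mu(t)](x,v)\big\rangle=\int_{\R^d\times\R^d}\phi(\|x-y\|)\,\big\langle v-\bar v,\ (w-\bar v)-(v-\bar v)\big\rangle\,d\mu(t)(y,w).
$$
For every $(y,w)\in\supp(\mu(t))$ one has $\langle v-\bar v,w-\bar v\rangle\leq\|v-\bar v\|\,\|w-\bar v\|\leq V(t)^2=\|v-\bar v\|^2$, so the bracket in the integrand is $\leq0$; moreover $\|x-y\|\leq2X(t)$ and $\phi$ nonincreasing give $\phi(\|x-y\|)\geq\phi(2X(t))>0$. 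Hence the integral is bounded above by $\phi(2X(t))\int\langle v-\bar v,w-v\rangle\,d\mu(t)(y,w)=-\phi(2X(t))\|v-\bar v\|^2$, which gives $\dot V(t)\leq-\phi(2X(t))\,V(t)$ for a.e.\ $t$; geometrically this says that the velocity field points inward the support.

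Next I would run the classical Ha--Liu/Carrillo--Fornasier Lyapunov argument. Set $\mathcal E(t)=V(t)+\int_0^{X(t)}\phi(2s)\,ds$; it is locally Lipschitz, and since $\phi\geq0$ the two inequalities above give $\dot{\mathcal E}(t)=\dot V(t)+\phi(2X(t))\dot X(t)\leq-\phi(2X(t))V(t)+\phi(2X(t))V(t)=0$ for a.e.\ $t$, so $\mathcal E$ is nonincreasing:
$$
V(t)+\int_0^{X(t)}\phi(2s)\,ds\ \leq\ V^0+\int_0^{X^0}\phi(2s)\,ds\qquad(t\geq0).
$$
Assumption \eqref{e-flock} says exactly that the right-hand side is strictly less than $\int_0^{+\infty}\phi(2s)\,ds$; since $s\mapsto\int_0^s\phi(2r)\,dr$ is continuous and strictly increasing, there is a unique $X_M\geq X^0$ with $\int_0^{X_M}\phi(2s)\,ds=V^0+\int_0^{X^0}\phi(2s)\,ds$, and dropping the nonnegative term $V(t)$ forces $X(t)\leq X_M$ for all $t\geq0$. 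Then $\phi(2X(t))\geq\phi(2X_M)>0$, so $\dot V(t)\leq-\phi(2X_M)V(t)$ and Gr\"onwall's lemma gives $V(t)\leq V^0 e^{-\phi(2X_M)t}$. Combined with $X(t)\leq X_M$ and $\supp(\mu(t))\subset B(\bar x(t),X(t))\times B(\bar v,V(t))$, this is \eqref{e-flockcont}; and since $\Lambda(t)\leq V(t)^2\to0$ while $\supp(\mu(t))\subset B(\bar x(t),X_M)\times\R^d$, the solution converges to flocking, so $\mu^0$ lies in the flocking region.

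The step I expect to require the most care is the rigorous derivation of the two differential inequalities for the merely Lipschitz functions $X$ and $V$: one is differentiating a maximum over a moving compact set, so one needs an envelope (Danskin-type) argument identifying, at almost every $t$, the derivative of $X$ (resp.\ $V$) with the derivative along a maximizing characteristic, for which the $C^1$-regularity of each individual characteristic and the Lipschitz regularity of $X,V$ suffice. A minor additional point is the chain rule for $t\mapsto\int_0^{X(t)}\phi(2s)\,ds$ when $\phi$ is only assumed nonincreasing (hence possibly discontinuous): this is handled by noting that $\phi$ has only countably many discontinuity levels and that on the closed set where $2X(t)$ equals such a level one has $\dot X=0$ a.e. Everything else is the soft inward-pointing estimate plus the classical Lyapunov computation.
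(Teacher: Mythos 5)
Your proof is correct and follows essentially the same route as the paper: both reduce the statement to the differential inequalities $\dot X\leq V$ and $\dot V\leq -\phi(2X)V$ via a Danskin-type differentiation of the support radii (Lemma~\ref{lem_edo_XV}), using the same inward-pointing computation for $\xi[\mu]$, and then run the classical Ha--Liu bootstrap to bound $X(t)\leq X_M$ and obtain exponential decay of $V$. The only cosmetic difference is that you phrase the bound on $X$ via the monotone Lyapunov functional $V(t)+\int_0^{X(t)}\phi(2s)\,ds$ whereas the paper integrates $\dot V\leq-\phi(2X)\dot X$ and argues by contradiction, but these are the same computation in two guises.
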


Note that, under the sufficient condition \eqref{e-flock}, according to \eqref{e-flockcont}, the size of the velocity support converges exponentially to $0$.
This result can be easily proved from corresponding results established in finite dimension in \cite{carfor,haliu} (using mean-fields limits), where the estimate \eqref{edo_XV} of Lemma \ref{lem_edo_XV} below is proved independently of the number of agents.
Hereafter, we rather use the particle flow and provide a simple proof.

Before proving Theorem \ref{c-flocksize}, we prove an auxiliary lemma giving some insight on the evolution of the size of supports.

\begin{lemma}\label{lem_edo_XV}
Given a solution $\mu$ of \eqref{kinetic_CS} (with $u\equiv 0$), for every time $t$, we define
\begin{equation*}
\begin{split}
X(t)&=\inf\left\{X\geq 0\mid \supp(\mu(t))\subset B({\bar x}(t),X)\times \R^d\right\},\\
V(t)&=\inf\left\{V\geq 0\mid \supp(\mu(t))\subset  \R^d\times B({\bar v},V)\right\},
\end{split}
\end{equation*}
The functions $X(\cdot)$ and $V(\cdot)$ are absolutely continuous, and we have
\begin{equation}\label{edo_XV}
\dot X(t)\leq V(t),\quad \dot V(t)\leq -\phi(2X(t)) V(t),
\end{equation}
for almost every $t\geq 0$.
\end{lemma}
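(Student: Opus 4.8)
The plan is to exploit the representation $\mu(t)=\Phi(t)\#\mu^0$ from Theorem \ref{t-esistenza} (Corollary \ref{cor_appli_CS_control} with $u\equiv0$), so that $\supp(\mu(t))=\Phi(t)(\supp(\mu^0))$ and every point of the support is of the form $(x(t),v(t))$ for a solution of the characteristic system $\dot x=v$, $\dot v=\xi[\mu(t)](x,v)$. First I would reduce everything to the centered variables: writing $\tilde x(t)=x(t)-\bar x(t)$ and $\tilde v(t)=v(t)-\bar v$, and using $\dot{\bar x}=\bar v$, $\dot{\bar v}=0$, the characteristics become $\dot{\tilde x}=\tilde v$ and $\dot{\tilde v}=\int \phi(\|x-y\|)(w-v)\,d\mu(t)(y,w)=\int \phi(\|\tilde x-\tilde y\|)(\tilde w-\tilde v)\,d\mu(t)(y,w)$, the kernel being invariant under the common translation. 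Then $X(t)=\sup\{\|\tilde x(t)\|\}$ and $V(t)=\sup\{\|\tilde v(t)\|\}$, the suprema taken over all characteristics issued from $\supp(\mu^0)$.

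For the absolute continuity and the first inequality: each map $t\mapsto\|\tilde x(t)\|$ is locally Lipschitz (the vector field has at most linear growth and is locally Lipschitz by the hypotheses of Theorem \ref{t-esistenza}, so the flow depends continuously and the trajectories are $C^1$), with $\left|\frac{d}{dt}\|\tilde x(t)\|\right|\le\|\dot{\tilde x}(t)\|=\|\tilde v(t)\|\le V(t)$; since $X(t)$ is a supremum of a family of functions that are uniformly locally Lipschitz on compact time intervals, $X(\cdot)$ is locally Lipschitz, hence absolutely continuous, and at any point of differentiability $\dot X(t)\le V(t)$. This is the standard envelope argument, and is routine.

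The heart of the matter is the bound $\dot V(t)\le-\phi(2X(t))V(t)$. Fix $t$ where $V$ is differentiable, and pick a characteristic $(x(t),v(t))$ attaining (or approaching) the supremum $\|\tilde v(t)\|=V(t)$; by a rotation we may assume $\tilde v(t)=(V(t),0,\dots,0)$. Then
\begin{equation*}
\frac{d}{dt}\|\tilde v(t)\|^2 = 2\langle \tilde v(t),\dot{\tilde v}(t)\rangle
= 2\int \phi(\|\tilde x(t)-\tilde y\|)\,\langle \tilde v(t),\tilde w-\tilde v(t)\rangle\,d\mu(t)(y,w).
\end{equation*}
For each $(y,w)$ in the support one has $\langle\tilde v(t),\tilde w\rangle\le \|\tilde v(t)\|\,\|\tilde w\|\le \|\tilde v(t)\|\,V(t)=\|\tilde v(t)\|^2$, hence $\langle\tilde v(t),\tilde w-\tilde v(t)\rangle\le 0$; moreover, since $\tilde y$ and $\tilde x(t)$ both lie in $B(0,X(t))$, we have $\|\tilde x(t)-\tilde y\|\le 2X(t)$, so $\phi(\|\tilde x(t)-\tilde y\|)\ge\phi(2X(t))$ by monotonicity of $\phi$ (and positivity of $\phi$). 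Combining, the integrand is $\le \phi(2X(t))\langle\tilde v(t),\tilde w-\tilde v(t)\rangle$ — wait, the inequality flips because $\langle\tilde v(t),\tilde w-\tilde v(t)\rangle\le0$, so in fact $\phi(\|\cdot\|)\langle\tilde v(t),\tilde w-\tilde v(t)\rangle\le\phi(2X(t))\langle\tilde v(t),\tilde w-\tilde v(t)\rangle$. Integrating and using $\int\langle\tilde v(t),\tilde w-\tilde v(t)\rangle d\mu(t)=\langle\tilde v(t),\bar v-\bar v\rangle-\|\tilde v(t)\|^2=-V(t)^2$ gives $\frac{d}{dt}\|\tilde v(t)\|^2\le -2\phi(2X(t))V(t)^2$, i.e. $\dot V(t)\le-\phi(2X(t))V(t)$. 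The one technical point to handle carefully is that the supremum defining $V(t)$ need not be attained by a single fixed characteristic for $t$ in a neighborhood; the clean way is the Danskin-type argument — take a maximizing sequence or use that $\supp(\mu^0)$ is compact so the sup is attained, observe $V(\cdot)$ is locally Lipschitz as before, and at a point of differentiability compute the derivative along a maximizer; the monotonicity estimate above controls $\limsup$ of the difference quotient from above. I expect this envelope/maximal-characteristic bookkeeping to be the only real obstacle; the geometric estimate $\langle\tilde v,\tilde w-\tilde v\rangle\le0$ together with $\phi(\|\tilde x-\tilde y\|)\ge\phi(2X)$ is the key mechanism and is entirely elementary.
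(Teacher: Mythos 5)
Your proposal is correct and follows essentially the same route as the paper: represent the support via the particle flow, reduce $X$ and $V$ to suprema over characteristics, obtain Lipschitz regularity of the suprema, differentiate via a Danskin-type (envelope) argument, use Cauchy-Schwarz for $\dot X\le V$, and use the two ingredients $\langle\tilde v,\tilde w-\tilde v\rangle\le 0$ (convexity of the velocity ball) and $\phi(\Vert\tilde x-\tilde y\Vert)\ge\phi(2X)$ (monotonicity of $\phi$) together with $\int\tilde w\,d\mu=0$ to close the estimate $\dot V\le-\phi(2X)V$. The centering of variables and the rotation are cosmetic; the paper works directly with $x-\bar x(t)$ and $v-\bar v$ and invokes Danskin's theorem explicitly, which is the same bookkeeping you flag as the only delicate point.
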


\begin{proof}[Proof of Lemma \ref{lem_edo_XV}.]
Since displacements of the support have bounded velocities, both $X(\cdot)$ and $V(\cdot)$ are absolutely continuous functions, and hence are differentiable almost everywhere.

From Section \ref{sec_appli_CS}, and in particular from \eqref{controlledpropag_supp} (with $u\equiv 0$), the support of $\mu(t)$ is the image of the support of $\mu(0)$ under the particle flow $\Phi(t)$ at time $t$. Denoting by $(x(\cdot,x^0,v^0),v(\cdot,x^0,v^0))$ the (particle trajectory) solution of \eqref{controlledparticleflow} (with $u\equiv 0$) such that $(x(0,x^0,v^0),v(0,x^0,v^0))=(x^0,v^0)$ at time $0$, this means that $(x(t,x^0,v^0),v(t,x^0,v^0))\in\supp(\mu(t))$, for every $(x^0,v^0)\in \supp(\mu^0)$, and it follows that
\begin{equation*}
\begin{split}
X(t) &= \max \left\{  \Vert x(t,x^0,v^0)-\bar x(t)\Vert \mid   (x^0,v^0)\in \supp(\mu^0)   \right\}, \\
V(t) &= \max \left\{\Vert v(t,x^0,v^0)-\bar v\Vert \mid   (x^0,v^0)\in \supp(\mu^0)   \right\} ,
\end{split}
\end{equation*}
for every $t\geq 0$.
Note that the maximum is reached because it is assumed that $\supp(\mu^0)$ is compact.
For every $t\geq 0$, we denote by $K^X_t\subset \supp(\mu^0)$ (resp. $K^V_t\subset \supp(\mu^0)$) the set of points $(x^0,v^0)$ such that the maximum is reached in $X(t)$ (resp., in $V(t)$).

By definition, we have $X(t)^2 = \Vert x(t,x^0,v^0)-\bar x(t)\Vert^2$ for every $(x^0,v^0)\in K^X_t$, and it follows from the Danskin theorem (see \cite{Danskin}) and from the fact that $\partial_t x(t,x^0,v^0)=v(t,x^0,v^0)$ that
$$
X(t) \dot X(t) = \max \left\{ \langle x(t,x^0,v^0)-\bar x(t), v(t,x^0,v^0)-\bar v\rangle \mid (x^0,v^0)\in K^X_t \right\},
$$
and therefore, using the Cauchy-Schwarz inequality, we infer that $\dot X(t)\leq \Vert v(t,x^0,v^0)-\bar v\Vert\leq V(t)$.

Similarly, we have $V(t)^2 = \Vert v(t,x^0,v^0)-\bar v\Vert^2$ for every $(x^0,v^0)\in K^V_t$. Note that, by the first definition of $V(t)$, we have $\supp(\mu(t))\subset\R^d\times B(\bar v,\Vert v(t,x^0,v^0)-\bar v\Vert)$.
Using again the Danskin theorem and \eqref{controlledpropag_supp} (with $u\equiv 0$), we have
$$
V(t) \dot V(t) = \max \left\{ \left\langle v(t,x^0,v^0)-\bar v, \xi[\mu(t)](x(t,x^0,v^0),v(t,x^0,v^0))) \right\rangle \mid (x^0,v^0)\in K^X_t \right\} ,
$$
and, using \eqref{def_xi}, we have
\begin{multline*}
\left\langle\xi[\mu(t)](x(t,x^0,v^0),v(t,x^0,v^0)),v(t,x^0,v^0)-\bar v\right\rangle \\
= \int_{\supp(\mu(t))}\phi(\Vert x(t,x^0,v^0)-y\Vert)\langle w-v(t,x^0,v^0), v(t,x^0,v^0)-\bar v\rangle\,d\mu(t)(y,w) ,
\end{multline*}
for every $t\geq 0$. In the integral, we have $(y,w)\in\supp(\mu(t))$, and hence $w\in B(\bar v,V(t))$ and therefore $\langle w-v(t,x^0,v^0), v(t,x^0,v^0)-\bar v\rangle\leq  0$ by convexity, because $v(t,x^0,v^0)$ belongs to the boundary of the ball $B(\bar v,V(t))$, by construction. Since $\phi$ is non-increasing and $\Vert x(t,x^0,v^0)-y\Vert\leq 2X(t)$ for every $(y,w)\in\supp(\mu(t))$, we infer that
\begin{multline*}
\left\langle\xi[\mu(t)](x(t,x^0,v^0),v(t,x^0,v^0)),v(t,x^0,v^0)-\bar v\right\rangle \\
\leq \phi(2X(t)) \int_{\supp(\mu(t))} \langle w-v(t,x^0,v^0), v(t,x^0,v^0)-\bar v\rangle\,d\mu(t)(y,w) .
\end{multline*}
Since $\int_{\supp(\mu(t))}  w \, d\mu(t)(y,w) =\bar v$ and $\int_{\supp(\mu(t))}  d\mu(t)(y,w) = 1$, it follows that
$$
\left\langle\xi[\mu(t)](x(t,x^0,v^0),v(t,x^0,v^0)),v(t,x^0,v^0)-\bar v\right\rangle \\
\leq -\phi(2X(t)) V(t)^2  .
$$
Finally, we conclude that $\dot V(t) \leq -\phi(2X(t)) V(t)$.
\end{proof}

Let us now prove Theorem \ref{c-flocksize}.

\begin{proof}[Proof of Theorem \ref{c-flocksize}.]
We prove \eqref{e-flockcont}, which implies the flocking of $\mu$. Using \eqref{e-flock}, we can prove that there exists $X_M>0$ such that $X (t)\leq X_M$ and $V(t)\leq V^0 e^{-\phi(2X_M)t}$ for every $t\geq 0$, with $X(t),V(t)$ defined in Lemma \ref{lem_edo_XV}.

The reasoning is similar to the proof of \cite[Theorem 3.2]{haliu}.
Using \eqref{e-flock}, since $\phi$ is nonnegative, there exists $X_M>0$ such that $V^0<  \int_{X^0}^{X_M} \phi(2 x)\,dx$. By contradiction, let us assume that $X(T)>X_M$ for some $T\geq 0$. Using \eqref{edo_XV}, we infer that
$$
V(T)\leq V^0-\int_0^T \phi(2X(t))\dot X(t)\,dt=V^0-\int_{X(0)}^{X(T)} \phi(2x)\,dx\leq V^0-\int_{X^0}^{X_M} \phi(2 x)\,dx<0,
$$
which contradicts the fact that $V(t)\geq 0$ for every $t\geq 0$. Therefore $X (t)\leq X_M$ for every $t\geq 0$. Since $\phi$ is nonincreasing, we have $\dot V(t)\leq -\phi(2X(t))V(t) \leq  -\phi(2 X_M)V(t)$, and thus $V(t)\leq V^0 e^{-\phi(2 X_M)t}$, for every $t\geq 0$. The theorem is proved.
\end{proof}

In order to prove our main results, we will use Theorem \ref{c-flocksize} as follows.

\begin{corollary}\label{cor_flocksize}
Let $\mu^0\in \mathcal{P}_c(\R^d\times\R^d)$.
Assume that there exist $(x^0,v^0)\in\R^d\times\R^d$ and some positive real numbers $\tilde X^0$ and $\tilde V^0$ such that $\supp(\mu^0)\subset B(x^0,\tilde X^0)\times B(v^0,\tilde V^0)$. If 
\begin{equation}\label{e-flock2}
2\tilde V^0\leq \int_{2\tilde X^0}^{+\infty} \phi(2x)\,dx ,
\end{equation}
then $\mu$ converges to flocking as $t$ tends to $+\infty$.

In particular, every $\mu^0$ with support satisfying \eqref{e-flock2} belongs to the flocking region.
\end{corollary}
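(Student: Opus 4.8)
The plan is to derive Corollary \ref{cor_flocksize} directly from Theorem \ref{c-flocksize}, by comparing the ``ball around $(x^0,v^0)$'' description of $\supp(\mu^0)$ with the ``ball around the barycenters'' quantities $X^0$ and $V^0$ that appear in that theorem. First I would observe that the space barycenter $\bar x^0=\int x\,d\mu^0(x,v)$ is an average of points lying in $B(x^0,\tilde X^0)$, hence lies in the closed ball $\overline{B(x^0,\tilde X^0)}$, so that $\Vert\bar x^0-x^0\Vert\leq\tilde X^0$, and likewise $\Vert\bar v-v^0\Vert\leq\tilde V^0$. Then, for any $(x,v)\in\supp(\mu^0)$, the triangle inequality gives $\Vert x-\bar x^0\Vert\leq\Vert x-x^0\Vert+\Vert x^0-\bar x^0\Vert\leq 2\tilde X^0$ and, similarly, $\Vert v-\bar v\Vert\leq 2\tilde V^0$. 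Hence $X^0\leq 2\tilde X^0$ and $V^0\leq 2\tilde V^0$, with $X^0$ and $V^0$ as defined in Theorem \ref{c-flocksize}.

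Combining these bounds with the hypothesis \eqref{e-flock2} and the fact that $\phi$ is nonincreasing and nonnegative, I would write
\[
V^0\ \leq\ 2\tilde V^0\ \leq\ \int_{2\tilde X^0}^{+\infty}\phi(2x)\,dx\ \leq\ \int_{X^0}^{+\infty}\phi(2x)\,dx ,
\]
so that $\mu^0$ ``almost'' satisfies the flocking condition \eqref{e-flock}. To invoke Theorem \ref{c-flocksize} I need this inequality to be strict, and bridging the gap between the ``$\leq$'' in \eqref{e-flock2} and the ``$<$'' in \eqref{e-flock} is the only genuinely delicate point. The idea is to use that $\phi$ is \emph{strictly} positive: if $X^0<2\tilde X^0$, then $\int_{X^0}^{2\tilde X^0}\phi(2x)\,dx>0$ and the chain above becomes strict; and the boundary case $X^0=2\tilde X^0$ is degenerate, because equality in the triangle inequality forces $\bar x^0\in\partial B(x^0,\tilde X^0)$, which, $\overline{B(x^0,\tilde X^0)}$ being strictly convex, would force the space-marginal of $\mu^0$ to be the Dirac mass at $\bar x^0$, hence $X^0=0$ — a contradiction unless $\tilde X^0=0$. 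In that last subcase $X^0=0$, and either $2\tilde V^0<\int_0^{+\infty}\phi(2x)\,dx$ (strictness for free), or the identical Dirac argument applied to the velocity variable, together with $\int_0^{+\infty}\phi(2x)\,dx>0$, again yields the strict inequality. (If one adopts the convention that the balls $B(\cdot,\cdot)$ are open, this is even simpler: compactness of $\supp(\mu^0)$ immediately places it in a strictly smaller closed ball, so $X^0<2\tilde X^0$ automatically.)

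Once $V^0<\int_{X^0}^{+\infty}\phi(2x)\,dx$ is established, Theorem \ref{c-flocksize} applies verbatim: it yields $X_M>0$ with the inclusion \eqref{e-flockcont}, whence $\Lambda(t)\to0$ exponentially fast and $\supp(\mu(t))$ stays bounded around $\bar x^0+t\bar v$, i.e.\ $\mu$ converges to flocking; in particular $\mu^0$ belongs to the flocking region by Definition \ref{def-flock}. I expect the geometric comparison of the two support descriptions to be entirely routine; the main obstacle, such as it is, is the strictness upgrade in the boundary case, which is exactly where strict positivity of $\phi$ enters.
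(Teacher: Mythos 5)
Your proof is correct and follows the same route as the paper's: bound the barycenter-centered support sizes $X^0$ and $V^0$ (as in Theorem~\ref{c-flocksize}) by $2\tilde X^0$ and $2\tilde V^0$ via the observation that $(\bar x^0,\bar v)$ lies in the closed ball $\overline{B(x^0,\tilde X^0)}\times\overline{B(v^0,\tilde V^0)}$ plus the triangle inequality, then invoke Theorem~\ref{c-flocksize}. Where you diverge is in spending most of your effort on the upgrade from the nonstrict ``$\leq$'' of \eqref{e-flock2} to the strict ``$<$'' of \eqref{e-flock}; you are right that this is a real gap to close (the paper's one-sentence proof simply asserts the two inclusions and does not address it), and your case analysis is correct. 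It is, however, more convoluted than necessary: rather than forcing strictness through the inequality $X^0<2\tilde X^0$ and positivity of $\phi$, it is cleaner to apply your strict-convexity/Dirac argument to the \emph{velocity} variable directly. Indeed, equality $V^0=2\tilde V^0$ would force $\bar v\in\partial B(v^0,\tilde V^0)$, hence the velocity marginal of $\mu^0$ would be concentrated at $\bar v$, giving $V^0=0$ and contradicting $V^0=2\tilde V^0>0$ (since $\tilde V^0>0$ by hypothesis). Thus $V^0<2\tilde V^0\leq\int_{2\tilde X^0}^{+\infty}\phi(2x)\,dx\leq\int_{X^0}^{+\infty}\phi(2x)\,dx$, which is \eqref{e-flock} directly, with no side discussion of the degenerate subcase $X^0=0$ (that subcase is in any event already excluded by the positivity hypothesis on $\tilde X^0$, so the extra paragraph you devote to it is superfluous).
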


\begin{proof}
It suffices to note that the barycenter $({\bar x}^0,{\bar v})$ of $\mu^0$ is contained in $B(x^0,X)\times B(v^0,V)$, and hence that $\supp(\mu^0)\subseteq B({\bar x},2X)\times B({\bar v},2V)$.
\end{proof}

\section{Proof of Theorem \ref{mainthm}}\label{sec_proof_mainthm}
In this section, we prove Theorem \ref{mainthm}.

We first establish some useful estimates on the interaction kernel $\xi[\mu]$ in Section \ref{sec_4.1}, for any measure $\mu$. These technical estimates will be useful in the proof of the main theorem.

In Section \ref{sec_4.2}, we provide some general estimates on absolutely continuous solutions of \eqref{kinetic_CS}.

After these preliminaries, we focus on the proof of Theorem \ref{mainthm}. 
Given any initial condition $\mu_0$, our objective is to design a control satisfying the constraints \eqref{cont_u} and \eqref{cont_omega}, steering the system \eqref{kinetic_CS} to flocking.

The strategy that we adopt is the following. We first steer the system to the flocking region (defined in Definition \ref{def-flock}) within a finite time $T$ by means of a suitable control. This control is piecewise constant in time: we divide the time interval $[0,T]$ in subintervals $[t_k,t_{k+1})$ and the control is computed as a function of $\mu(t_k)$. After reaching the flocking region at time $T$, we switch off the control and let the uncontrolled equation \eqref{kinetic_CS} (with $u\equiv 0$) converge (asymptotically) to flocking.

The time $T$ depends on the initial distribution $\mu^0$ of the crowd: the more ``dispersed" $\mu^0$ is, the larger $T$ is.
Of course, if $\mu^0$ already belongs to the flocking region then it is not necessary to control the equation (hence $T=0$ in that case).

We proceed in two steps. 
In Section \ref{s-Cpop1}, we design an effective control $\chi_\omega u$ in the one-dimensional case $d=1$.
In Section \ref{s-SSd}, we extend the contruction to any dimension $d\geq 1$.
Section \ref{s-space} is devoted to the proof of the variant of Theorem \ref{mainthm}, with control constraints \eqref{cont_u} and \eqref{cont_omega_variante}.

\subsection{Preliminary estimates on the interaction kernel $\xi[\mu]$}\label{sec_4.1}
Let $\mu\in \mathcal{P}_c(\R^d\times\R^d)$ be arbitrary.
In this section, we study the behavior of the interaction kernel $\xi[\mu]$ defined by \eqref{def_xi}, in function of the support of $\mu$.

Recall that the space of configurations $(x,v)$ is $\R^d\times\R^d$. 
We consider the canonical orthonormal basis $(e_1,\ldots,e_{2d})$ of $\R^d\times\R^d$, in which we denote $x=(x_1,\ldots,x_d)$ and $v=(v_1,\ldots,v_d)$.

For simplicity of notation, we assume that, for every $k\in\{1,\ldots,d\}$, the $k$-th component of the spatial variable satisfies $x_k\in[0,Y_k]$, eventually after a translation in the spatial variables, where $Y_k\geq 0$ is the size of the support in the variable $x_k$.
Similarly, we assume that $v_k\in[0,W_k]$, where $W_k\geq 0$ is the size of the support in the variable $v_k$.
Note that, with this choice, we have invariance of the positive space $[0,+\infty)^d\times [0,+\infty)^d$. 

We start with an easy lemma. 
\begin{lemma}\label{lem_trivial}
Let $\mu\in\mathcal{P}_c^{ac}(\R^d\times\R^d)$ be such that $\supp(\mu)\subset\R^d\times[0,V_*]^{k-1}\times [0,W_k]\times[0,V_*]^{d-k}$ for some $V_*\geq 0$ and $W_k\geq 0$. Then, for every $(x,v)\in\R^d\times\R^d$ such that $v_k\geq W_k$ (resp., $v_k\leq 0$), we have $\langle\xi[\mu](x,v),e_{d+k}\rangle\leq 0$ (resp., $\langle\xi[\mu](x,v),e_{d+k}\rangle\geq 0$).
\end{lemma}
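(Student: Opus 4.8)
The plan is to compute the relevant component of the interaction kernel directly from the definition \eqref{def_xi} and exploit the sign of the integrand coming from the support hypothesis. Fix $(x,v)\in\R^d\times\R^d$ with $v_k\ge W_k$. By \eqref{def_xi},
$$
\langle\xi[\mu](x,v),e_{d+k}\rangle=\int_{\R^d\times\R^d}\phi(\Vert x-y\Vert)\,(w_k-v_k)\,d\mu(y,w),
$$
where $w_k=\langle w,e_k\rangle$ denotes the $k$-th velocity component of the integration variable. First I would observe that $\phi$ is a positive function by assumption, so $\phi(\Vert x-y\Vert)>0$ everywhere. Then, on $\supp(\mu)$ we have $w_k\in[0,W_k]$ by the support hypothesis (the $k$-th velocity coordinate lies in $[0,W_k]$), hence $w_k-v_k\le W_k-v_k\le 0$ whenever $v_k\ge W_k$. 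Therefore the integrand is nonpositive $\mu$-almost everywhere, and the integral is $\le 0$, which gives $\langle\xi[\mu](x,v),e_{d+k}\rangle\le 0$.

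The symmetric case is handled identically: if $v_k\le 0$, then for $(y,w)\in\supp(\mu)$ we have $w_k\ge 0\ge v_k$, so $w_k-v_k\ge 0$, and since $\phi>0$ the integrand is nonnegative $\mu$-a.e., whence $\langle\xi[\mu](x,v),e_{d+k}\rangle\ge 0$. The absolute continuity of $\mu$ plays no essential role here beyond ensuring we are in the setting where $\xi[\mu]$ is well defined; the argument only uses that $\mu$ is a probability measure supported in the stated slab and that $\phi$ is positive.

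There is no real obstacle: the only point requiring a word of care is to make sure the support condition is used correctly, namely that the $k$-th velocity coordinate of points in $\supp(\mu)$ is confined to $[0,W_k]$ (the factors $[0,V_*]$ in the other velocity coordinates and the unrestricted $\R^d$ in the spatial coordinates are irrelevant for this particular component), and that the inequality $w_k-v_k\le 0$ (resp. $\ge 0$) then holds pointwise on $\supp(\mu)$ under the stated hypothesis on $v_k$. Integrating this pointwise sign against the positive weight $\phi(\Vert x-y\Vert)\,d\mu$ immediately yields the claimed sign of the $(d+k)$-th component of $\xi[\mu](x,v)$.
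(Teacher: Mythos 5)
Your proof is correct and is essentially the same argument the paper gives: project the definition of $\xi[\mu]$ onto $e_{d+k}$, use the support hypothesis to get the pointwise sign of $w_k-v_k$ on $\supp(\mu)$, and integrate against the nonnegative weight $\phi(\|x-y\|)\,d\mu$. The paper states this in one line; you spell it out, but the content is identical.
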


\begin{figure}[h]
\begin{center}
\resizebox{6cm}{!}{\input 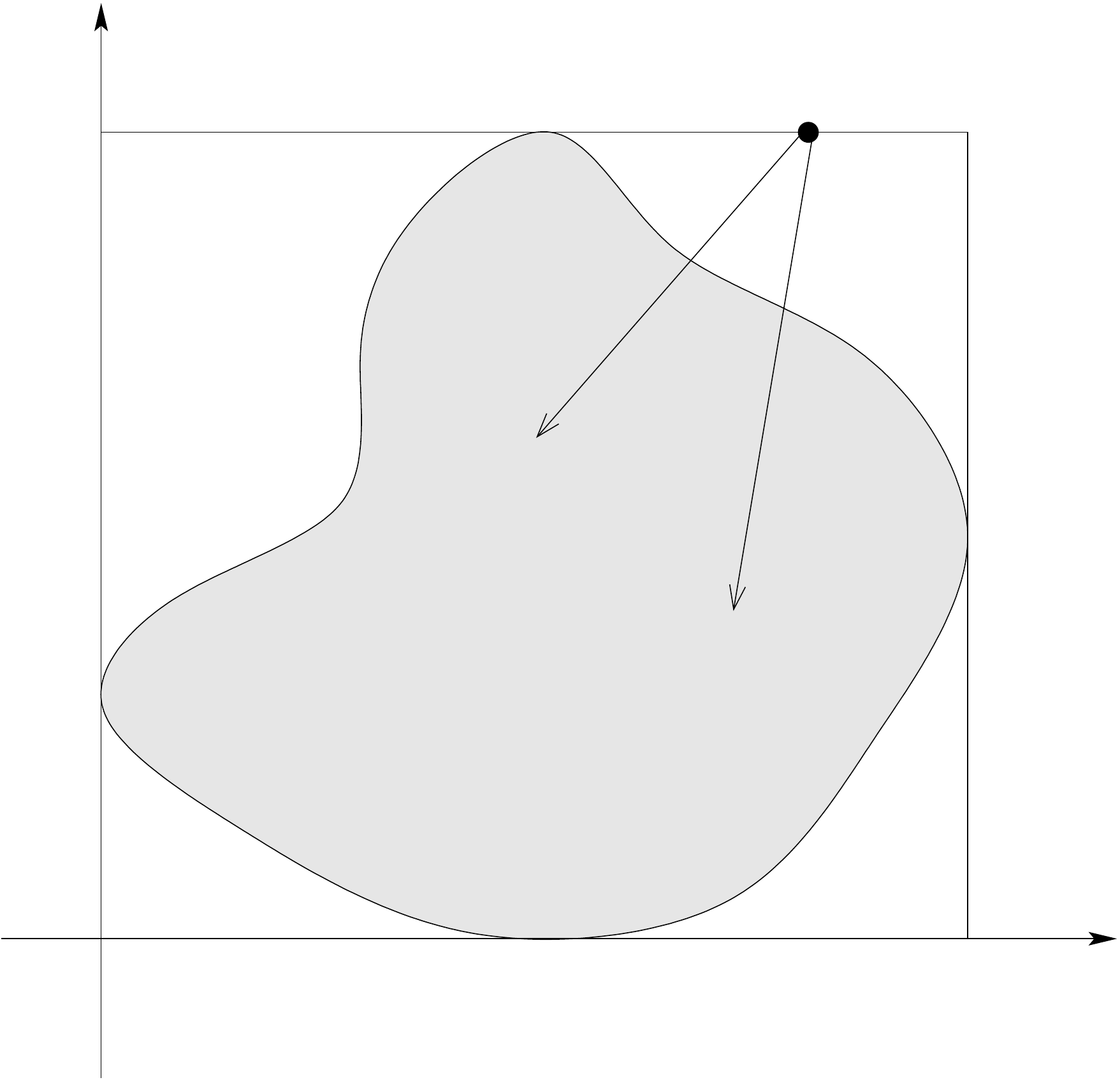_t}
\end{center}
\caption{Vector field $\xi[\mu]$}\label{fig-fig_supp}
\end{figure}

The lemma is obvious by using the expression
$\xi[\mu](x,v) = \iint_{\supp(\mu)} \phi(\Vert x-y\Vert) (w-v)\, d\mu(y,w)$, since $\phi$ is nonnegative and $w\in\supp(\mu)$ implies that $w_k\leq W_k$, hence $\langle(w-v), e_{d+k}\rangle=w_k-v_k\leq 0$.
Lemma \ref{lem_trivial} implies that, if $(x,v)\notin \R\times[0,V_*]^d$, then the vector field $\xi[\mu]$ is pointing inwards (see Figure \ref{fig-fig_supp}). Note that this is in accordance with the fact that the velocity part of $\supp(\mu)$ has a trend to shrink, as proved (more precisely) by the differential inequality \eqref{edo_XV} of Lemma \ref{lem_edo_XV}.

\medskip

Let us now establish a more technical result, which will be instrumental in order to prove Theorem \ref{mainthm}.

\begin{lemma}\label{p-perp}
Let $\mu\in \mathcal{P}_c(\R^d\times\R^d)$, with velocity barycenter ${\bar v}=(\bar v_1,\ldots,\bar v_d)$. We assume that there exist $\tilde x\in\R^d$, a real number $a_k$ and nonnegative real numbers $X$, $V_*$, $W_k$ such that
$$
\supp(\mu)\subset B(\tilde x,X)\times [0,V_*]^{k-1}\times[a_k,a_k+W_k]\times[0,V_*]^{k-d} .
$$ 
Let $(x,v)\in \R^d\times\R^d$ be such that $v_k-\bar v_k>r^+$ with
\begin{equation}\label{e-perp+}
r^+=\frac{\phi(0)}{\phi(0)+\phi(2X)} (W_k+a_k-\bar v_k).
\end{equation}
Then $\left\langle \xi[\mu](x,v) , (v_k-\bar v_k)e_{d+k}\right\rangle < 0$.

Similarly, let $(x,v)\in \R^d\times\R^d$ be such that $v_k-\bar v_k<-r^-$ with
\begin{equation}\label{e-perp-}
r^-=\frac{\phi(0)}{\phi(0)+\phi(2X)} (\bar v_k-a_k).
\end{equation}
Then $\left\langle \xi[\mu](x,v) , (v_k-\bar v_k)e_{d+k}\right\rangle < 0$.
\end{lemma}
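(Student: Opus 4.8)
The plan is to reduce the statement to a scalar inequality in the $k$-th velocity coordinate. Since $\bar v_k=\int w_k\,d\mu(y,w)$ and $w_k\in[a_k,a_k+W_k]$ on $\supp(\mu)$, we have $a_k\le\bar v_k\le a_k+W_k$, hence $r^+\ge 0$ and the hypothesis gives $v_k-\bar v_k>r^+\ge 0$. As $\langle\xi[\mu](x,v),(v_k-\bar v_k)e_{d+k}\rangle=(v_k-\bar v_k)\,\langle\xi[\mu](x,v),e_{d+k}\rangle$ with $v_k-\bar v_k>0$, it suffices to prove
$$\langle\xi[\mu](x,v),e_{d+k}\rangle=\int_{\supp(\mu)}\phi(\Vert x-y\Vert)\,(w_k-v_k)\,d\mu(y,w)<0.$$
Moreover the second assertion (with $r^-$) follows from the first applied to the reflected measure $\mu'$ obtained from $\mu$ by negating the $k$-th velocity coordinate: this reflection leaves $\Vert x-y\Vert$ unchanged and leaves the target quantity $\langle\xi[\mu](x,v),(v_k-\bar v_k)e_{d+k}\rangle$ invariant (both the $k$-th velocity component of $\xi$ and the factor $v_k-\bar v_k$ change sign), while sending $[a_k,a_k+W_k]$ to $[-a_k-W_k,-a_k]$ and $\bar v_k$ to $-\bar v_k$, hence turning $a_k+W_k-\bar v_k$ into $\bar v_k-a_k$ and the condition $v_k-\bar v_k<-r^-$ into the corresponding $r^+$-condition for $\mu'$. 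So I only treat the first assertion.

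The core of the argument is a decomposition of the integral according to the sign of $w_k-v_k$, combined with the two-sided bound $\phi(2X)\le\phi(\Vert x-y\Vert)\le\phi(0)$ for $(y,w)\in\supp(\mu)$ (the upper bound because $\phi$ is nonincreasing; the lower bound because $\Vert x-y\Vert\le 2X$, since $x$ and the spatial support of $\mu$ both lie in $B(\tilde x,X)$, together with $\phi>0$). Set $I_1=\int_{\{w_k>v_k\}}(w_k-v_k)\,d\mu\ge 0$ and $I_2=\int_{\{w_k\le v_k\}}(w_k-v_k)\,d\mu\le 0$, so that the barycenter identity yields $I_1+I_2=\int(w_k-v_k)\,d\mu=\bar v_k-v_k$. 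Estimating $\phi$ from above where the integrand is nonnegative and from below where it is nonpositive,
$$\int_{\supp(\mu)}\phi(\Vert x-y\Vert)(w_k-v_k)\,d\mu\le \phi(0)\,I_1+\phi(2X)\,I_2=\bigl(\phi(0)-\phi(2X)\bigr)I_1+\phi(2X)(\bar v_k-v_k).$$
Finally, since $w_k\le a_k+W_k$ on $\supp(\mu)$ and $\mu$ is a probability measure, $I_1\le\max(a_k+W_k-v_k,\,0)$.

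It remains to check that $\bigl(\phi(0)-\phi(2X)\bigr)\max(a_k+W_k-v_k,0)+\phi(2X)(\bar v_k-v_k)<0$ whenever $v_k>\bar v_k+r^+$. If $v_k\ge a_k+W_k$ the first term vanishes and the claim is immediate since $\phi(2X)>0$ and $v_k>\bar v_k$. If $v_k<a_k+W_k$, a direct expansion shows the left-hand side is negative precisely when $v_k>(a_k+W_k)-\frac{\phi(2X)}{\phi(0)}(a_k+W_k-\bar v_k)$, and one verifies that $\bar v_k+r^+$ exceeds this value: after subtracting $\bar v_k$ and dividing by $a_k+W_k-\bar v_k>0$ this reduces to $\frac{\phi(0)}{\phi(0)+\phi(2X)}\ge\frac{\phi(0)-\phi(2X)}{\phi(0)}$, i.e. to $\phi(0)^2\ge\phi(0)^2-\phi(2X)^2$, which holds (strictly, as $\phi(2X)>0$). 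This gives $\langle\xi[\mu](x,v),e_{d+k}\rangle<0$ and hence the lemma.

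The only genuinely delicate point, in my view, is the choice of the splitting threshold: splitting the integral at the barycenter $\bar v_k$ rather than at $v_k$ and bounding naively produces a strictly weaker constant than $\phi(0)/(\phi(0)+\phi(2X))$, so one must split exactly at $v_k$ and feed the barycenter constraint back in through $I_1+I_2=\bar v_k-v_k$. The other ingredient to handle with care is the two-sided control $\phi(2X)\le\phi(\Vert x-y\Vert)\le\phi(0)$ on the support, which is precisely what makes the stated constant appear and which uses that $x$ lies within distance $2X$ of the spatial support of $\mu$.
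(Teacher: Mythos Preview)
Your proof is correct and takes a genuinely different route from the paper. The paper splits the integral at the fixed level $w_k=\bar v_k+r^+$ into three regions $A=\{w_k\ge \bar v_k+r^+\}$, $B=\{\bar v_k\le w_k<\bar v_k+r^+\}$, $C=\{w_k<\bar v_k\}$, discards the $B$-contribution by sign, and then compares the $A$- and $C$-integrals using the barycenter identity in the form $\int_A(w_k-\bar v_k)\,d\mu\le\int_C(\bar v_k-w_k)\,d\mu$. You instead split at the evaluation point $w_k=v_k$ into just two pieces $I_1,I_2$, feed the barycenter back through the clean identity $I_1+I_2=\bar v_k-v_k$, bound $I_1\le\max(a_k+W_k-v_k,0)$, and reduce everything to the elementary inequality $\phi(0)^2\ge\phi(0)^2-\phi(2X)^2$. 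Your decomposition is shorter and makes the origin of the constant $\phi(0)/(\phi(0)+\phi(2X))$ more transparent; the paper's three-region split is closer to how the lemma is actually used (the threshold $\bar v_k+r^+$ is precisely the boundary of the invariant region constructed later). Both arguments rest on the same two-sided bound $\phi(2X)\le\phi(\Vert x-y\Vert)\le\phi(0)$, and both tacitly assume---as you rightly flag---that $x$ lies within distance $2X$ of the spatial support; this is how the lemma is applied downstream.
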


\begin{proof} We prove the result with $a_k=0$ only, by observing that the case $a_k\neq 0$ can be recovered by translation of the $k$-th velocity variable. We give the proof of the first case only, in which $v_k-\bar v_k>r^+$ (for the second case, it suffices to use the change of variable $v_k\mapsto W_k-v_k$).

We want to prove that 
\begin{equation}\label{e-condr+}
\int_{\R^d\times\R^d} \phi(\Vert x-y\Vert)(w_k-v_k)(v_k-\bar v_k)\,d\mu(y,w) <0.
\end{equation}
Writing $w_k-v_k=(w_k-(\bar v_k+r^+))+((\bar v_k+r^+)-v_k)$, and noting that
\begin{equation*}
\int_{\R^d\times\R^d} \phi(\Vert x-y\Vert)((\bar v_k+r^+)-v_k)(v_k-\bar v_k)\,d\mu(y,w) <0,
\end{equation*}
since $\phi$ is nontrivial and nonnegative, $((\bar v_k+r^+)-v_k)<0$, $v_k-\bar v_k>0$ and since $\mu$ is a measure with positive mass, it follows that, to prove \eqref{e-condr+}, it suffices to prove that
\begin{equation}\label{e-condr+2}
\int_{\R^d\times\R^d} \phi(\Vert x-y\Vert)(w_k-(\bar v_k+r^+))(v_k-\bar v_k)\,d\mu(y,w) \leq 0.
\end{equation}

The space $\R^d\times\R^d$ is the union of the three (disjoint) subsets $A$, $B$ and $C$ defined by
\begin{equation*}
\begin{split}
A&=\{(y,w)\in \R^d\times\R^d\mid \bar v_k+r^+ \leq w_k\}, \\
B&=\{(y,w)\in\R^d\times\R^d\mid {\bar v}_k\leq w_k< \bar v_k+r^+\},\\
C&=\{(y,w)\in\R^d\times\R^d\mid w_k< {\bar v}_k\} .
\end{split}
\end{equation*}


Note that, since $(w_k-(\bar v_k+r^+))<0$ in $B$ and $v_k-\bar v_k>0$, we have
$$
\int_B \phi(\Vert x-y\Vert)(w_k-(\bar v_k+r^+))(v_k-\bar v_k)\,d\mu(y,w) \leq 0 .
$$
As a consequence, we will prove \eqref{e-condr+2} by establishing the (stronger) inequality
\begin{multline}\label{e-condr+4}
\int_A \phi(\Vert x-y\Vert)(w_k-(\bar v_k+r^+))(v_k-\bar v_k)\,d\mu(y,w) \\
\leq \int_C \phi(\Vert x-y\Vert)((\bar v_k+r^+)-w_k)(v_k-\bar v_k)\,d\mu(y,w).
\end{multline}
Noting that $\phi(2X)\leq \phi(\Vert x-y\Vert)\leq \phi(0)$ since $\phi$ is decreasing and $\Vert x-y\Vert\leq 2X$, and using the definitions of $A$ and of $r^+$, we get
\begin{multline*}
\int_A \phi(\Vert x-y\Vert)(w_k-(\bar v_k+r^+))(v_k-\bar v_k)\,d\mu(y,w) \\
\leq \mu(A)\phi(0) (W_k-(\bar v_k+r^+)) (v_k-\bar v_k)
=\mu(A)\frac{\phi(0) \phi(2X)}{\phi(0)+\phi(2X)}(W_k-\bar v_k)(v_k-\bar v_k) ,
\end{multline*}
and
$$
\int_C \phi(\Vert x-y\Vert)((\bar v_k+r^+)-w_k)(v_k-\bar v_k)\,d\mu(y,w)\geq \phi(2X)\int_C (\bar v_k-w_k)(v_k-\bar v_k)\,d\mu(y,w).
$$
Since $v_k-\bar v_k>r^+>0$ and $\phi(2X)>0$, to prove \eqref{e-condr+4}, it suffices to prove that
\begin{equation}\label{e-condr+5}
\mu(A)\frac{\phi(0)}{\phi(0)+\phi(2X)}(W_k-\bar v_k)=\mu(A)r^+\leq \int_C (\bar v_k-w_k)\,d\mu(y,w).
\end{equation}
By definition of the velocity barycenter ${\bar v}$ of $\mu$, we have $\int_{\R^d\times\R^d} \langle w-{\bar v}, z\rangle \,d\mu(y,w) =0$, for any $z\in\R^d$. Choosing $z=e_{d+k}$, we get that
\begin{equation}\label{e-2}
\int_{A} (w_k-{\bar v}_k)\,d\mu(y,w)+ \int_{B} (w_k-{\bar v}_k)\,d\mu(y,w)=\int_{C} ({\bar v}_k-w_k)\,d\mu(y,w).
\end{equation}
By definition of the sets $A$, $B$ and $C$, all integrals in \eqref{e-2} are nonnegative, and in particular we infer that
\begin{equation*}
\int_{A} (w_k-{\bar v}_k)\,d\mu(y,w)\leq \int_{C} ({\bar v}_k-w_k)\,d\mu(y,w).
\end{equation*}
Since $w_k-{\bar v}_k\geq r^+$ in $A$, the inequality \eqref{e-condr+5} follows. The lemma is proved.
\end{proof}

\subsection{Estimates on the solutions of (\ref{kinetic_CS}) with control}\label{sec_4.2}
Recall that the space barycenter $\bar x(t)$ and the velocity barycenter $\bar v(t)$ of $\mu(t)$ are defined by \eqref{def_space_velocity_barycenter}.
Due to the action of $\chi_\omega u$, the velocity barycenter is not constant. 
We have the following result.

\begin{lemma}\label{p-bary} 
Let $\mu\in C^0(\R,\mathcal{P}_c(\R^d\times\R^d))$ be a solution of \eqref{kinetic_CS}. We have
$$
\dot {\bar x}(t)={\bar v}(t),\qquad\dot {\bar v}(t)=\int_{\omega(t)} u(t,x,v)\, d\mu(t)(x,v) ,
$$
for every $t\in\R$.
\end{lemma}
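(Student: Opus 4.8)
The plan is to differentiate $\bar x(t)=\int x\,d\mu(t)$ and $\bar v(t)=\int v\,d\mu(t)$ directly, using the weak formulation of \eqref{kinetic_CS} (exactly as in the proof of Proposition~\ref{prop1}); alternatively one could use the pushforward representation $\mu(t)=\Phi_{\omega,u}(t)\#\mu^0$ of Corollary~\ref{cor_appli_CS_control} and differentiate under the integral sign along the characteristics \eqref{controlledparticleflow}. Following the first route, for any $g\in C^\infty(\R^d\times\R^d)$ the equation \eqref{kinetic_CS} in the sense of measures gives
\begin{equation*}
\frac{d}{dt}\int g\,d\mu(t) = \int \langle v,\mathrm{grad}_x g\rangle\,d\mu(t) + \int \big\langle \xi[\mu(t)]+\chi_{\omega(t)}u(t,\cdot,\cdot),\mathrm{grad}_v g\big\rangle\,d\mu(t).
\end{equation*}
The only technical point is to apply this with the non-compactly-supported coordinate functions $g(x,v)=x_k$ and $g(x,v)=v_k$, $k=1,\dots,d$: this is legitimate because, by Corollary~\ref{cor_appli_CS_control} and continuity of the flow $\Phi_{\omega,u}$, the set $\bigcup_{t\in[0,T]}\supp(\mu(t))$ is relatively compact for every $T>0$, so one may replace $x_k$ (resp.\ $v_k$) by a compactly supported smooth function coinciding with it on a neighbourhood of this compact set; the identity above then holds on $[0,T]$, hence on $\R$ by arbitrariness of $T$.

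Applying this with $g(x,v)=x_k$, for which $\mathrm{grad}_x g=e_k$ and $\mathrm{grad}_v g=0$, yields $\dot{\bar x}_k(t)=\int v_k\,d\mu(t)=\bar v_k(t)$, that is, $\dot{\bar x}(t)=\bar v(t)$. Applying it with $g(x,v)=v_k$, for which $\mathrm{grad}_x g=0$ and $\mathrm{grad}_v g=e_{d+k}$, gives
\begin{equation*}
\dot{\bar v}_k(t)=\int_{\R^d\times\R^d}\big\langle \xi[\mu(t)](x,v),e_{d+k}\big\rangle\,d\mu(t)(x,v) + \int_{\omega(t)} \big\langle u(t,x,v),e_{d+k}\big\rangle\,d\mu(t)(x,v).
\end{equation*}

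It then remains to check that the first term vanishes. Using \eqref{def_xi} and Fubini's theorem (licit since $\phi$ is bounded on the compact set $\supp(\mu(t))-\supp(\mu(t))$ and $\mu(t)$ is a probability measure with compact support), this term equals
\begin{equation*}
\iint \phi(\Vert x-y\Vert)(w_k-v_k)\,d\mu(t)(y,w)\,d\mu(t)(x,v),
\end{equation*}
which is antisymmetric under the exchange $(x,v)\leftrightarrow(y,w)$, because $\phi(\Vert x-y\Vert)$ is symmetric while $w_k-v_k$ is antisymmetric; hence it is zero. Collecting the $k$ components, we obtain $\dot{\bar v}(t)=\int_{\omega(t)}u(t,x,v)\,d\mu(t)(x,v)$, which is the claim. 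The only genuine subtlety is the justification of testing the weak formulation against the unbounded functions $x_k,v_k$, handled as above via the uniform compactness of the support on bounded time intervals; the remaining steps are a routine symmetry computation.
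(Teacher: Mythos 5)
Your proposal is correct and follows essentially the same route as the paper's proof: test the weak formulation of \eqref{kinetic_CS} against $g=x_k$ and $g=v_k$, and observe that the $\xi[\mu]$-contribution vanishes by antisymmetry. You also take the extra (welcome) care to justify testing against the unbounded coordinate functions via uniform compactness of the support on bounded time intervals, and to spell out the Fubini/antisymmetry argument that the paper states more tersely as ``$\int\xi[\mu]\,d\mu=0$ by antisymmetry''.
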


\begin{proof}
Proceeding as in the proof of Proposition \ref{prop1}, considering \eqref{kinetic_CS} in the sense of measures, we compute
\begin{equation*}
\dot {\bar x}_k(t) =  \partial_t  \int_{\R^d\times\R^d} x_k \, d\mu(t)(x,v) 
=\int_{\R^d\times\R^d} \mathrm{div}_x  (x_k v)\, d\mu(t)(x,v) \\
=\int_{\R^d\times\R^d} v_k\, d\mu(t)(x,v)={\bar v}_k(t) ,
\end{equation*}
for every $k\in\{1,\ldots,d\}$. Similarly, using the fact that $\int_{\R^d\times\R^d} \xi[\mu]\, d\mu = 0$ (by antisymmetry), we get
$$
\dot {\bar v}_k(t)=\partial_t \int_{\R^d\times\R^d} v_k \, d\mu(t)(x,v) = \int_{\R^d\times\R^d} \chi_{\omega(t)} u_k(t,x,v) \, d\mu(t)(x,v),
$$
for every $k\in\{1,\ldots,d\}$.
\end{proof}

Let us now consider solutions $\mu(t)=f(t)\dxv$ of \eqref{kinetic_CS} that are absolutely continuous.
Let us then estimate the evolution of the $L^\infty$ norm of $f(t)$. 

\begin{lemma}\label{prop_e-Lp}
Let $\mu=f\,\dxv\in C^0(\R,\mathcal{P}_c^{ac}(\R^d\times\R^d))$ be a solution of \eqref{kinetic_CS}, with a Lipschitz control $\chi_{\omega} u$.
For every $p\in[ 1,+\infty]$, we have the estimate
\begin{equation}\label{e-Lp}
\frac{d}{dt} \Vert f(t,\cdot,\cdot)\Vert_{L^p(\R^d\times\R^d)}\leq\frac{p-1}{p} \Vert f(t,\cdot,\cdot)\Vert_{L^p(\R^d\times\R^d)}  \left( \phi(0)d+\Vert \mathrm{div}_v (u(t,\cdot,\cdot))\Vert _{L^\infty(\omega(t))} \right),
\end{equation}
for every $t\in\R$, with the agreement that $\frac{p-1}{p}=1$ for $p=+\infty$.
\end{lemma}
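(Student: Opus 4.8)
The plan is to differentiate the $L^p$ norm along the transport equation, using the fact that $\mu = f\,\dxv$ is absolutely continuous and, by Corollary \ref{cor_appli_CS_control} and Remark \ref{r-ck}, remains so, with $f$ at least Lipschitz in the space-velocity variables as long as the vector field $V_{\omega,u}[\mu] = (v,\xi[\mu]+\chi_\omega u)$ is Lipschitz (which holds since $u$ is a Lipschitz control and $\xi[\mu]$ is Lipschitz by \eqref{def_xi}). For $p\in[1,+\infty)$ I would write, from \eqref{kinetic_CS} rewritten as $\partial_t f + \mathrm{div}_{(x,v)}(V_{\omega,u}[\mu] f) = 0$,
\begin{equation*}
\frac{d}{dt}\int_{\R^d\times\R^d} f^p \dxv = p\int f^{p-1}\partial_t f \dxv = -p\int f^{p-1}\,\mathrm{div}_{(x,v)}(V_{\omega,u}[\mu]f)\dxv .
\end{equation*}
Integrating by parts (there are no boundary terms since $f(t)$ has compact support), $-p\int f^{p-1}\,\mathrm{div}(Vf) = p\int \langle \mathrm{grad}(f^{p-1}), Vf\rangle = p(p-1)\int f^{p-1}\langle \mathrm{grad} f, V\rangle$. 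Since $\langle \mathrm{grad}(f^p),V\rangle = p f^{p-1}\langle \mathrm{grad} f, V\rangle$, this equals $(p-1)\int \langle \mathrm{grad}(f^p), V\rangle = -(p-1)\int f^p\,\mathrm{div}(V)\dxv$, again integrating by parts. So the whole computation collapses to
\begin{equation*}
\frac{d}{dt}\int f^p\dxv = -(p-1)\int f^p\,\mathrm{div}_{(x,v)}(V_{\omega,u}[\mu])\dxv .
\end{equation*}

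The next step is to bound $\mathrm{div}_{(x,v)}(V_{\omega,u}[\mu])$ from below. We have $\mathrm{div}_{(x,v)}\binom{v}{\xi[\mu]+\chi_\omega u} = \mathrm{div}_x(v) + \mathrm{div}_v(\xi[\mu]) + \mathrm{div}_v(\chi_\omega u) = \mathrm{div}_v(\xi[\mu]) + \chi_\omega\,\mathrm{div}_v(u)$, the first term vanishing because $v$ does not depend on $x$. From \eqref{def_xi}, $\mathrm{div}_v\xi[\mu](x,v) = \int \phi(\|x-y\|)\,\mathrm{div}_v(w-v)\,d\mu(y,w) = -d\int\phi(\|x-y\|)\,d\mu(y,w)$, and since $0\le \phi\le\phi(0)$ and $\mu$ is a probability measure, $-\phi(0)d \le \mathrm{div}_v\xi[\mu](x,v)\le 0$. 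Therefore $\mathrm{div}_{(x,v)}(V_{\omega,u}[\mu]) \ge -\phi(0)d - \|\mathrm{div}_v u(t,\cdot,\cdot)\|_{L^\infty(\omega(t))}$ pointwise on $\R^d\times\R^d$ (the $u$-term being supported on $\omega(t)$). Plugging this into the identity above and using $\int f^p\dxv \ge 0$ gives
\begin{equation*}
\frac{d}{dt}\int f^p\dxv \le (p-1)\left(\phi(0)d + \|\mathrm{div}_v u(t,\cdot,\cdot)\|_{L^\infty(\omega(t))}\right)\int f^p\dxv ,
\end{equation*}
and dividing by $p\big(\int f^p\big)^{(p-1)/p}$ yields exactly \eqref{e-Lp} for finite $p$. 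The case $p=+\infty$ follows either by letting $p\to+\infty$ in the integrated (Grönwall) form of the inequality, or more directly by noting that along characteristics $\Phi_{\omega,u}(t)$ the density satisfies $f(t,\Phi_{\omega,u}(t,z)) = f(0,z)\exp\!\big(-\int_0^t \mathrm{div}_{(x,v)}(V_{\omega,u}[\mu(s)])(\Phi_{\omega,u}(s,z))\,ds\big)$ by Liouville's formula, whence $\|f(t)\|_{L^\infty}$ is controlled by the same exponential of $\int_0^t(\phi(0)d + \|\mathrm{div}_v u\|_{L^\infty(\omega)})$, which is the integrated form of \eqref{e-Lp} with the constant $\frac{p-1}{p}=1$.

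The main obstacle is the justification of the integration-by-parts steps and of the differentiation under the integral sign: one needs $f(t)$ to be regular enough (Lipschitz in $(x,v)$, compactly supported, with the chain rule $\mathrm{grad}(f^p) = p f^{p-1}\mathrm{grad} f$ valid a.e.) and $V_{\omega,u}[\mu(t)]$ to be Lipschitz so that $\mathrm{div}_v u$ is at least an $L^\infty$ function — this is exactly the standing assumption that the control $\chi_\omega u$ is Lipschitz, under which all the manipulations are legitimate (one may also first establish the estimate for smooth data via the pushforward/Liouville representation of Corollary \ref{cor_appli_CS_control} and then pass to the limit, which is the cleanest way to avoid technicalities with the weak formulation).
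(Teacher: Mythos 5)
Your proof is correct and follows essentially the same approach as the paper: differentiate $\int f^p$, integrate by parts exploiting the compact support, reduce to the divergence of the velocity field, and bound $\mathrm{div}_v\xi[\mu]$ by $\phi(0)d$ using that $\phi\le\phi(0)$ and $\mu$ is a probability measure. The paper organizes the computation as three separate integrals (one for $\mathrm{div}_x(vf)$, one for $\xi[\mu]$, one for $\chi_\omega u$) and bounds the latter two via $p f^{p-1}\mathrm{div}_v(Vf)=(p-1)f^p\mathrm{div}_v V + \mathrm{div}_v(Vf^p)$, whereas you do one global integration by parts to reach the cleaner identity $\frac{d}{dt}\int f^p=-(p-1)\int f^p\,\mathrm{div}_{(x,v)}V$ before bounding; these are algebraically equivalent. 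One small cosmetic caveat: for $p\in(1,2)$ your intermediate expression $\mathrm{grad}(f^{p-1})$ involves the negative power $f^{p-2}$, which is singular where $f$ vanishes, even though the product $\mathrm{grad}(f^{p-1})\cdot f=(p-1)f^{p-1}\mathrm{grad} f$ is bounded; it is slightly cleaner to expand $\mathrm{div}(Vf)=f\,\mathrm{div}V+\langle V,\mathrm{grad}f\rangle$ first (as the paper does implicitly), which avoids ever writing a negative power. Your treatment of $p=+\infty$ via the Liouville/characteristics representation is a valid supplement that the paper does not spell out explicitly.
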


\begin{proof}
The proof is a generalization of the proof of \cite[Proposition 3.1]{hatad}. Using \eqref{kinetic_CS}, we have
\begin{multline}\label{e-3}
\frac{d}{dt} \int_{\R^d\times\R^d} f^p \dxv = p \int_{\R^d\times\R^d}  -f^{p-1} \langle v, \mathrm{grad}_x f\rangle \dxv - p \int_{\R^d\times\R^d} f^{p-1}\mathrm{div}_v(\xi[f]f)\dxv \\
- p \int_{\R^d\times\R^d} f^{p-1} \mathrm{div}_v (\chi_\omega u f)\dxv.
\end{multline}
Let us compute the three terms at the right-hand side of \eqref{e-3}. 
The first term is equal to $-\int_{\R^d\times\R^d} \mathrm{div}_x ( f^{p} v) \dxv$ and hence is equal to $0$ since $f^{p}$ has compact support.
For the second term, noting that $f^{p-1} \mathrm{div}_v (\xi[f] f) = f^{p} \mathrm{div}_v ( \xi[f]) + f^{p-1} \langle \xi[f], \nabla_v f\rangle$, and that
$$
\mathrm{div}_v \left( \xi[f] f^p\right) = \mathrm{div}_v (\xi[f]) f^p  + \left\langle \xi[f], \nabla_vf^p \right\rangle = \mathrm{div}_v (\xi[f]) f^p + p f^{p-1} \langle\xi[f], \nabla_v f\rangle,
$$
we infer that
$$
p f^{p-1} \mathrm{div}_v(\xi[f] f) = (p-1) f^{p}\, \mathrm{div}_v (\xi[f]) + \mathrm{div}_v \left( \xi[f] f^p \right) .
$$
It follows that
\begin{equation*}
\begin{split}
p \left\vert \int_{\R^d\times\R^d} f^{p-1}\mathrm{div}_v (\xi[f]f)\dxv \right\vert
& = (p-1)\left\vert \int_{\R^d\times\R^d} f^p \, \mathrm{div}_v (\xi[f])\dxv \right\vert \\
& \leq  (p-1) \Vert f\Vert_{L^p(\R^d\times\R^d)}^p \Vert \mathrm{div}_v (\xi[f]) \Vert _{L^\infty(\supp(f))}.
\end{split}
\end{equation*}
Similar estimates are done for the third term by replacing $\xi[f]$ with $\chi_\omega u$, that is a Lipschitz vector field. Using \eqref{e-3}, we get 
$$
\frac{d}{dt} \Vert f\Vert_{L^p(\R^d\times\R^d)} \leq \frac{p-1}{p} \Vert f\Vert_{L^p(\R^d\times\R^d)}  \left( \Vert \mathrm{div}_v (\xi[f]) \Vert _{L^\infty(\supp(f))}+\Vert \mathrm{div}_v (u)\Vert _{L^\infty(\omega(t))}  \right).
$$
Finally, noting that
$$
\left\vert \partial_{v_k} \int_{\R^d\times\R^d}\phi(\Vert x-y\Vert) (w_k-v_k)f(y,w)\,dy\,dw \right\vert
= \left\vert - \int_{\R^d\times\R^d}\phi(\Vert x-y\Vert)f(y,w)\,dy\,dw\right\vert 
\leq \phi(0),
$$ 
for every $k\in\{1,\ldots,d\}$, it follows that $\Vert \mathrm{div}_v (\xi[f]) \Vert _{L^\infty(\supp(f))} \leq \phi(0) d$, and this yields \eqref{e-Lp}.
\end{proof}


\subsection{Proof of Theorem \ref{mainthm} in the one-dimensional case}\label{s-Cpop1}
Throughout this section, we assume that $d=1$.

We first define the {\em fundamental step} $\SS$ of our algorithm in Section \ref{s-ss}. We prove in Section \ref{s-sss} that a finite number of iterations of this fundamental step $\SS$ provides convergence to flocking.

\subsubsection{Fundamental step $\SS$}\label{s-ss}
Hereafter, we define the fundamental step $\SS$ of our strategy. The strategy takes, as an input, a measure $\mu^0=\mu(0)$ (absolutely continuous) standing for the initial data of \eqref{kinetic_CS}, and provides, as outputs, a time $T^0$ and a measure $\mu^1=\mu(T^0)$ (which will be proved to be absolutely continuous), standing for the time horizon and the corresponding solution of \eqref{kinetic_CS} at time $T^0$ for some adequate control $\chi_\omega u$.


In the definition below, the bracket subscript stands for the index of a given sequence. It is used in order to avoid any confusion with coordinates subscripts.

\paragraph{Definition of the control $\chi_\omega u$ along the time interval $[0,T^0]$ (fundamental step $\SS$).} 
In order to define the control, we need to define, at every time $t$, the control set $\omega(t)$ (on which the control acts), and the control force $u(t,x,v)$ for every $(x,v)\in\omega(t)$.
We are actually going to set
$$
u(t,x,v) = \psi(t,x,v) \frac{v-\bar v(0)}{\vert v-\bar v(0)\vert},
$$
for every $t\in [0,T^0]$, and every $(x,v)\in\R^d\times\R^d$, where the function $\psi$, constructed below, is piecewise constant in $t$ for $(x,v)$ fixed, continuous and piecewise linear in $(x,v)$ for $t$ fixed (see Figure \ref{fig-phi}), and where the control set $\omega(t)$ is piecewise constant in $t$.

\medskip

Since the construction of the control is quite technical, we first provide an intuitive idea of how to define it. According to Lemma \ref{p-perp}, the set $\R\times [\bar v(t)-r^-(t),\bar v(t)+r^+(t)]$ (whose precise definition is given below) is invariant under the particle flow dynamics, and therefore, inside this invariant set, it is not useful to act, and hence we set $u=0$ there. Outside of that set, we want to push the population inwards. Since the invariant set is variable in time, we make precise estimates to have a larger set that is invariant on the whole interval $[0,T^0]$. Since the population outside of such a set can have a mass larger than the constraint $c$, due to the control constraint \eqref{cont_u} it is not possible to act on that population in its whole at any time $t$, and our strategy consists of splitting the domain into ``slices'' $\Omega_i(t)$ such that each slice contains a mass $\frac{c}{2}$, and then we will act on each of those slices, on successive small time intervals. With precise estimates on the displacement of mass, we will then check that $\Omega_i(t)$ satisfies the constraint $\mu(\Omega_i(t))\leq c$ for every $t\in[0,T^0]$.
 
\medskip

Let us now give the precise definition of the control.

Let $\mu^0=f^0\,\dxv\in \mathcal{P}_c^{ac}(\R\times\R)$ be an initial datum. Using a translation, we assume that $\supp(\mu^0)\subset [0,Y^0]\times [0,W^0]$, where $Y^0\geq 0$ is the size of the support in the variable $x$ and $W^0\geq 0$ is the size of the support in the variable $v$. Admitting temporarily that, with the control that we will define, there exists a unique solution $\mu$ of \eqref{kinetic_CS} such that $\mu(0)=\mu^0$, which is absolutely continuous, we assume that $\supp(\mu(t))\subset [0,Y(t)]\times [a(t),a(t)+W(t)]$, where 
\begin{equation*}
\begin{split}
Y(t)&=\max \left\{ \vert x\vert \mid (x,v)\in \supp(\mu(t))\right\},\\
a(t)&=\min \left\{ \vert v\vert \mid (x,v)\in \supp(\mu(t))\right\},\\
W(t)&=\max \left\{ \vert v\vert \mid (x,v)\in \supp(\mu(t))\right\}-a(t),
\end{split}
\end{equation*}
with $Y(0)=Y^0$, $W(0)=W^0$, and $a(0)=0$.

Let ${\bar v}(t)={\bar v}(t)\in(a(t),a(t)+W(t))$ be the velocity barycenter of $\mu(t)$ (note that $a(t)<{\bar v}(t)<a(t)+W(t)$, with a strict inequality because $\mu(t)$ is absolutely continuous). We set $\bar v^0=\bar v(0)$.

For $T^0>0$ to be chosen, we are going to define the control along the time interval $[0,T^0]$. We consider a regular subdivision of the time interval $[0,T^0]$, into $n$ subintervals,
$$
[0,T^0] = \bigcup_{i=1}^n \left[ \frac{(i-1)T^0}{n},\frac{iT^0}{n} \right),
$$
and, along each time subinterval $\left[ \frac{(i-1)T^0}{n},\frac{iT^0}{n} \right)$, we set $\omega(t) = \omega_{[i]}$ and $\psi(t,x,v) = \psi_{[i]}(x,v)$, with $\omega_{[i]}$ and $\psi_{[i]}$ defined as follows.

\medskip

First of all, we define the functions 
\begin{equation*}
\begin{split}
\alpha^+(t)&=\frac{\phi(0)}{\phi(0)+\phi(Y(t)+W(t))}(W(t)+a(t)-{\bar v}(t)),\\
\beta^+(t)&=\frac{1}{3} (W(t)+a(t)-\alpha^+(t)-{\bar v}(t))=\frac{1}{3}\frac{\phi(Y(t)+W(t))}{\phi(0)+\phi(Y(t)+W(t))}(W(t)+a(t)-{\bar v}(t)),\\
\alpha^-(t)&=\frac{\phi(0)}{\phi(0)+\phi(Y(t)+W(t))}({\bar v}(t)-a(t)),\\
\beta^-(t)&=\frac{1}{3} ({\bar v}(t)-a(t)-\alpha^-(t))=\frac{1}{3} \frac{\phi(Y(t)+W(t))}{\phi(0)+\phi(Y(t)+W(t))}({\bar v}(t)-a(t)),\\
\alpha(t)&=\max\left(\alpha^+(t),\alpha^-(t)\right),\\
\beta(t)&=\max\left(\beta^+(t),\beta^-(t)\right),
\end{split}
\end{equation*}
and we set $\alpha^0=\alpha(0)$ and $\beta^0=\beta(0)$. Let us now define the control sets $\omega_{[i]}$, $i=1,\ldots,n$.
We write
$$
\left[ 0,Y^0 \right] = \bigcup_{i=0}^{n-1} \left[ x_{[i]},x_{[i+1]} \right] ,
$$
with $n=\left\lceil\frac{2}{c}\right\rceil$ (integer part), and where we have set $x_{[0]}=0$, $x_{[n]}=Y^0$, and $x_{[i]}$ is defined as the minimal value such that the set $\Omega_{[i]}^0=[x_{[i-1]},x_{[i]}]\times [0,W^0]$ satisfies $\mu^0(\Omega_{[i]}^0)=\frac{c}{2}$, for every $i\in\{1,\ldots,n-1\}$. Note that the set $\Omega_{[n]}^0=[x_{[n-1]},x_{[n]}]\times [0,W^0]$ is such that $\mu^0(\Omega_{[n]}^0)\leq \frac{c}{2}$.

At this step, it is important to note that the points $x_{[i]}$ are well defined because $\mu^0\in\mathcal{P}^{ac}_c(\R\times\R)$, and thus the mass of $\mu^0$ in each interval $[x_{[i-1]},x]\times[0,W^0]$ is a continuous function of $x$.

Let $\varepsilon^0>0$ be the largest positive real number such that
$$
\mu^0([x_{[i]}-3\varepsilon^0,x_{[i+1]}+3\varepsilon^0]\times [0,W^0])\leq c, \quad\forall i\in\{1,\ldots,n\}.
$$
As above, it is important to note that $\varepsilon^0$ is well defined because $\mu^0$ is absolutely continuous.

We define now the (positive) time $T^0$ by 
$$
T^0 = \min\left( \frac{\varepsilon^0}{W^0},\frac{\beta^0}{2c},1 \right) .
$$

Now, for every $i\in\{1,\ldots,n\}$, we define (see Figure \ref{fig-phi})
\begin{multline*}
\omega_{[i]} = \left[ x_{[i]}-2\varepsilon^0,x_{[i+1]} +2\varepsilon^0 \right] \\
\times \left(  [{\bar v}^0-\alpha^0-4\beta^0,{\bar v}^0-\alpha^0-\beta^0]\cup [{\bar v}^0+\alpha^0+\beta^0,{\bar v}^0+\alpha^0+4\beta^0] \right) .
\end{multline*}

\begin{figure}[h]
\begin{center}
\includegraphics[width=8cm]{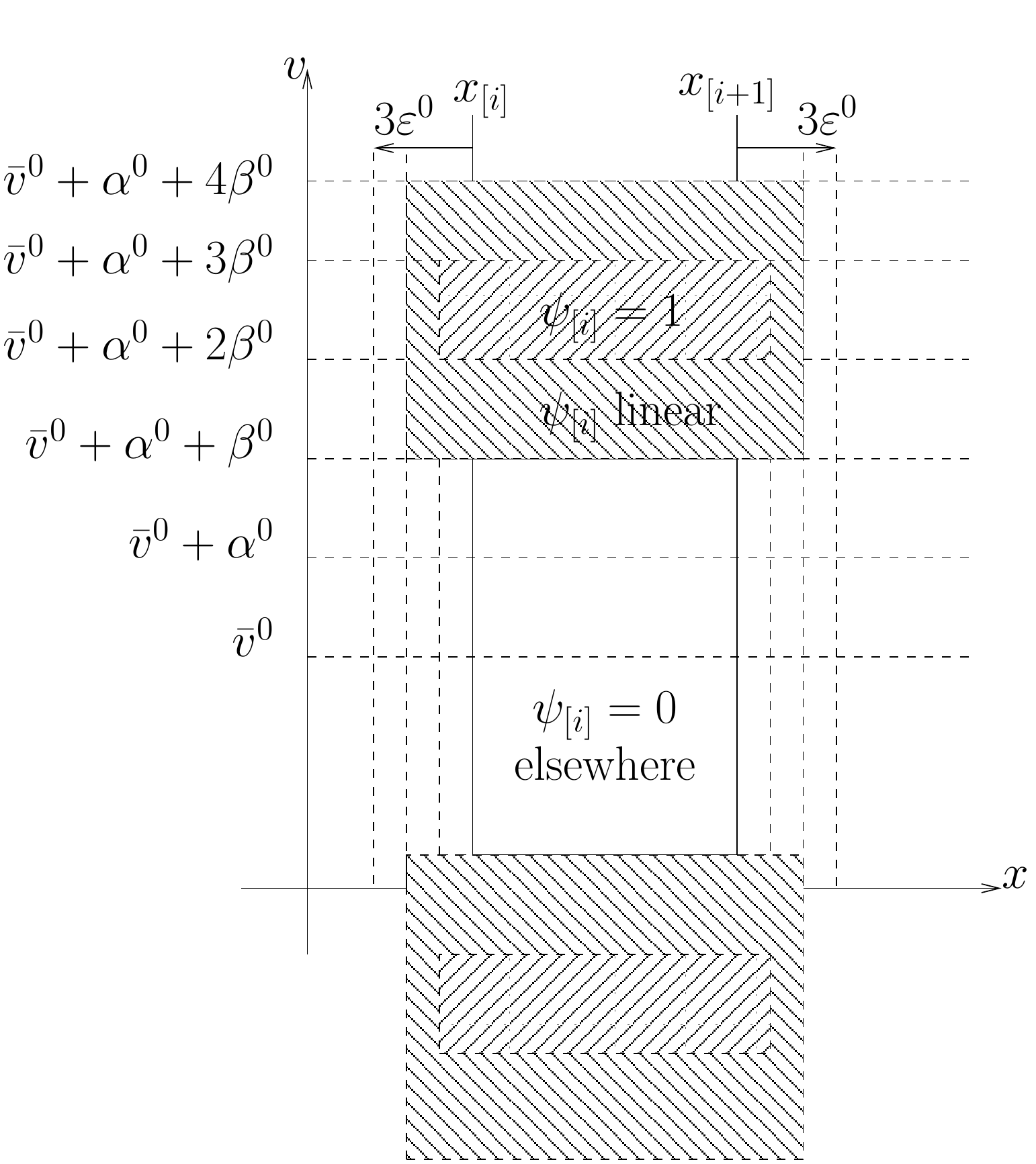}
\caption{Definition of $\psi_{[i]}$.}\label{fig-phi}
\end{center}
\end{figure}

It remains to define the functions $\psi_{[i]}$, $i=1,\ldots,n$, on $\R\times\R$.
We define the subsets of $\R\times\R$:

$A_{[i]}=\R\times\R\setminus \omega_{[i]}$, and
\begin{multline*}
B_{[i]}= [x_{[i]}-\varepsilon^0,x_{[i+1]}+\varepsilon^0] \\
\times \left( [{\bar v}^0-\alpha^0-3\beta^0,{\bar v}^0-\alpha^0-2\beta^0] \cup [{\bar v}^0+\alpha^0+2\beta^0,{\bar v}^0+\alpha^0+3\beta^0] \right) .
\end{multline*}
We set $\psi_{[i]}=0$ on $A_{[i]}$, we set $\psi_{[i]}=1$ on $B_{[i]}$, and
\begin{multline*}
\psi_{[i]}(x,v)=
\min\bigg( \frac{|x-x_{[i]}-2\varepsilon^0|}{\varepsilon^0}, \frac{|x-(x_{[i+1]}+2\varepsilon^0)|}{\varepsilon^0}, \frac{|v-({\bar v}^0-\alpha^0-\beta^0)|}{\beta^0}, \\
\frac{|v-({\bar v}^0+\alpha^0+\beta^0)|}{\beta^0}, \frac{|v-({\bar v}^0-\alpha^0-4\beta^0)|}{\beta^0}, \frac{|v-({\bar v}^0+\alpha+4\beta)|}{\beta^0} \bigg)
\end{multline*}
on $\R^2\setminus(A_{[i]}\cup B_{[i]})$.
The construction is illustrated on Figure \ref{fig-phi}.


\begin{remark}\label{rem_Ben}
For every $t\in[0,T^0]$, and for every $i\in\{1,\ldots,n\}$, we define the set $\Omega_{[i]}(t)$ as the image of the rectangle $\Omega_{[i]}^0$ under the controlled particle flow \eqref{controlledparticleflow}.
With the control strategy defined above, we note that the variables $x_{[i]}$ are defined at time $0$ only, since the sets $\Omega_{[i]}(t)$ are not rectangles anymore in general for positive times. 
\end{remark}

\begin{remark} \label{r-ck2} In connection with Remark \ref{r-ck} about higher regularity of the solution, one can easily adapt the definition of $\chi_\omega u$ to preserve regularity in the following sense. Let $\mu_0\in \mathcal{P}^{ac}_c(\R^d\times\R^d)$ such that its density  is a function of class $C^k$ on $\R^d\times\R^d$. Then, define the sets $\omega_{[i]},A_{[i]},B_{[i]}$ as before. Define $\phi_{[i]}\in C^{k-1}(\R^d\times\R^d)$ such that $\psi_{[i]}=0$ on $A_{[i]}$, $\psi_{[i]}=1$ on $B_{[i]}$ and $\|\phi_{[i]}\|_{Lip}\leq 2\eps^0$; this is possible since $A_{[i]}$ and $B_{[i]}$ are disjoint with distance $\eps^0$. Then, by applying the same strategy given below, one finds the same results and estimates given below, eventually replacing $\eps^0$ with $2\eps^0$.
\end{remark}

Let us now prove that the control $\chi_\omega u$ constructed above along the time interval $[0,T^0)$ is Lipschitz and satisfies the constraints \eqref{cont_u} and \ref{cont_omega}. We then prove that the solution $\mu$ of \eqref{kinetic_CS}, starting at $\mu^0$ at time $t=0$ and corresponding to the control $\chi_\omega u$, is well defined over the whole interval $[0,T^0]$, is absolutely continuous (that is, does not develop any singularity), and that its velocity support decreases.

\begin{lemma}\label{p-SS} 
Let $\mu^0=f^0\, \dxv\in\mathcal{P}^{ac}_c(\R\times\R)$, with compact support contained in $[0,Y^0]\times [0,W^0]$.
There exists a unique solution $\mu\in C^0([0,T^0],\mathcal{P}(\R\times\R))$ of \eqref{kinetic_CS},  corresponding to the control $\chi_\omega u$ defined by $\SS$. Moreover:
\begin{itemize}
\item $\mu\in C^0([0,T^0],\mathcal{P}^{ac}_c(\R\times\R))$, that is, the solution $\mu$ remains, like $\mu^0$, absolutely continuous and with compact support; in particular, at time $T^0$, we have $\mu^1=\mu(T^0)\in\mathcal{P}^{ac}_c(\R\times\R)$;
\item setting $Y^1=Y(T^0)$, $a^1=a(T^0)$ and $W^1=W(T^0)$, such that $\supp(\mu^1)\subset [0,Y^1]\times [a^1,a^1+W^1]$, it holds
$Y^1\leq Y^0+W^0$;
\item the domain $\R\times[{\bar v}^0-\alpha^0-\beta^0-k^-,{\bar v}^0+\alpha^0+\beta^0+k^+]$ is invariant under the controlled particle flow $\Phi_{\omega,u}(t)$ (defined in Corollary \ref{cor_appli_CS_control}), for all $k^-\geq 0$ and $k^+\geq 0$;
\item either $[a^1,a^1+W^1]\subset [0,W^0-\frac{T^0}{n}]$ or $[a^1,a^1+W^1]\subset [\frac{T^0}{n},W^0]$, which implies that $W^1\leq W^0-\frac{T^0}{n}$;
\item $\displaystyle 12\varepsilon^0\Vert f^0\Vert_{L^\infty(\R^d\times\R^d)} W^0\geq c$;
\item the control satisfies the constraints \eqref{cont_u} and \eqref{cont_omega}.
\end{itemize}
\end{lemma}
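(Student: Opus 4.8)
The plan is to verify each bullet of Lemma~\ref{p-SS} in turn, the crux being the well-posedness and absolute continuity of $\mu$ on $[0,T^0]$, for which we need the control $\chi_\omega u$ to be Lipschitz in $(x,v)$ uniformly in $t$. First I would observe that, by construction, $\psi_{[i]}$ is continuous and piecewise linear on $\R^2$, with slopes bounded by $\max(1/\varepsilon^0,1/\beta^0)$, so $u(t,\cdot,\cdot) = \psi\,(v-\bar v^0)/|v-\bar v^0|$ is Lipschitz away from the set $\{v=\bar v^0\}$; but since $\psi_{[i]}$ vanishes identically on a neighborhood of $\{v=\bar v^0\}$ (the invariant strip contains $\bar v^0$ in its interior), the product is globally Lipschitz on $\R^2$, with a Lipschitz constant that we can estimate explicitly. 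Having this, Corollary~\ref{cor_appli_CS_control} (via Theorem~\ref{t-esistenza}) applies on $[0,T^0]$ and yields existence, uniqueness, the pushforward representation $\mu(t)=\Phi_{\omega,u}(t)\#\mu^0$, and the preservation of absolute continuity and compact support; this gives the first bullet and $\mu^1\in\mathcal{P}^{ac}_c$. Note one must run a fixed-point/continuation argument here because the Lipschitz bound on $u$ was computed from $\varepsilon^0,\beta^0,W^0$ which are attached to $\mu^0$; since $T^0$ is taken small (and $\le 1$), and the support sizes grow in a controlled way, the construction is consistent on the whole interval.

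Next I would handle the geometric bullets, which all follow from the structure of the vector field and Lemma~\ref{p-perp}. For the bound $Y^1\le Y^0+W^0$: since $v_k\in[0,W^0]$ is preserved along the flow on $[0,T^0]$ only if the velocity strip is invariant, but more directly, the $x$-component of any characteristic satisfies $\dot x = v \le \sup|v|$; one checks $\sup_{t\le T^0}\sup|v(t)|\le W^0$ (the velocity support never expands because both $\xi[\mu]$ and the control point inwards, by Lemma~\ref{lem_trivial} and the design of $\psi_{[i]}$), so $Y(t)\le Y^0 + t\,W^0 \le Y^0 + W^0$ using $T^0\le 1$. The invariance of the strip $\R\times[\bar v^0-\alpha^0-\beta^0-k^-,\ \bar v^0+\alpha^0+\beta^0+k^+]$ is the key qualitative claim: on the boundary $v=\bar v^0+\alpha^0+\beta^0+k^+$, we need $\dot v\le 0$, i.e. $\langle\xi[\mu(t)](x,v)+\chi_\omega u,\,e_{d+1}\rangle\le 0$. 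The control term is $\le 0$ there (since $\psi\ge 0$ and $v-\bar v^0>0$, the control points in the $-e_{d+1}$ direction… wait, $u=\psi\,(v-\bar v^0)/|v-\bar v^0|$ points \emph{outward}) — so in fact one must check that on the relevant boundary $\psi$ is already zero, i.e. the outer part of $\omega_{[i]}$ has been crossed, so that only $\xi[\mu]$ acts, and then apply Lemma~\ref{p-perp} with the running estimates $r^\pm(t)\le\alpha^\pm(t)\le\alpha^0+\beta^0$ (this requires bounding $\alpha^\pm(t),\beta^\pm(t)$ in terms of their values at $0$, using $T^0$ small, which is where the precise choice of the $\tfrac13$ factors in $\beta^\pm$ matters — they create a safety margin of $\beta^0$ on each side). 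Once the strip with $k^\pm=0$ is invariant, larger $k^\pm$ follow since the field points even more strongly inward further out (Lemma~\ref{lem_trivial}).

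Then the dichotomy bullet: outside the invariant strip $\R\times[\bar v^0-\alpha^0-\beta^0,\bar v^0+\alpha^0+\beta^0]$ — and note $a^0=0$, $\bar v^0-\alpha^0-\beta^0>0$ or $\bar v^0+\alpha^0+\beta^0<W^0$ can hold, but actually $\alpha^-+\beta^- = \tfrac13(2\bar v^0) \le \tfrac23 W^0$ etc. — one uses that inside $B_{[i]}$, where $\psi=1$, the control has unit strength pointing toward $\bar v^0$, so a particle starting at the top of the support moves down at rate at least $1 - (\text{inward drift of }\xi)$, and since $\xi$ also points inward there, $\dot v \le -1 + (\text{small})$, hence over time $T^0/n$ the top of the support drops by at least $T^0/n$; the symmetric statement holds at the bottom, but since we only ever act on one "side" per slice configuration — actually on both outer strips of every slice simultaneously — one shows the whole velocity support contracts from both ends, or at worst the support ends up contained in $[0,W^0-T^0/n]$ or $[T^0/n,W^0]$ after re-centering, giving $W^1\le W^0 - T^0/n$. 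The mass bullet $12\varepsilon^0\|f^0\|_{L^\infty}W^0\ge c$ is elementary: by maximality of $\varepsilon^0$, there is some $i$ with $\mu^0([x_{[i]}-3\varepsilon^0,x_{[i+1]}+3\varepsilon^0]\times[0,W^0])= c$ (the constraint becomes tight), but this set has Lebesgue measure at most $(x_{[i+1]}-x_{[i]}+6\varepsilon^0)W^0$ and $x_{[i+1]}-x_{[i]}$ itself is bounded using that $\mu^0$ of the slice is $c/2\le \|f^0\|_\infty (x_{[i+1]}-x_{[i]})W^0$, so $c = \mu^0(\cdots)\le \|f^0\|_\infty(x_{[i+1]}-x_{[i]}+6\varepsilon^0)W^0$; combining with a crude bound $x_{[i+1]}-x_{[i]}\le 6\varepsilon^0$ (or absorbing constants) gives the stated $12\varepsilon^0\|f^0\|_\infty W^0\ge c$ — I would double-check the exact constant against the definition but the mechanism is this. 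Finally, \eqref{cont_u} holds since $|u|=|\psi|\le 1$ by construction; for \eqref{cont_omega}, since at each time $t$ only one slice index is "active" as the control set (no — $\omega(t)=\omega_{[i]}$ is a single slice on each subinterval), we need $\mu(t)(\omega_{[i]})\le c$; using the pushforward representation and Lemma~\ref{prop_e-Lp} to control $\|f(t)\|_\infty$, the mass that can enter the enlarged slice $[x_{[i]}-2\varepsilon^0,x_{[i+1]}+2\varepsilon^0]\times(\text{strips})$ during $[0,iT^0/n]\subset[0,T^0]$ is bounded by the initial mass of $[x_{[i]}-3\varepsilon^0,x_{[i+1]}+3\varepsilon^0]\times[0,W^0]$ (since particles move at speed $\le W^0$ in $x$ over time $\le T^0\le \varepsilon^0/W^0$), which is $\le c$ by the defining property of $\varepsilon^0$. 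The main obstacle is making the invariance-of-the-strip argument rigorous with the time-varying quantities $\alpha^\pm(t),\beta^\pm(t)$: one has to show these do not drift by more than the margin $\beta^0$ over $[0,T^0]$, which forces the bound $T^0\le\beta^0/(2c)$ and requires the a priori estimate $|\dot{\bar v}(t)|\le c$ from Lemma~\ref{p-bary} together with $|\dot Y|,|\dot W|\le W^0$ to control the modulus of continuity of $\phi(Y(t)+W(t))$; this bootstrap, interleaving the flow estimates with the support estimates, is the technical heart of the lemma.
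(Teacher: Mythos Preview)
Your overall plan matches the paper's, but two of the bullets have genuine gaps in your argument.

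For the fifth bullet, your route via ``maximality implies tightness for some $i$'' leads you to $c\le\|f^0\|_\infty(x_{[i+1]}-x_{[i]}+6\varepsilon^0)W^0$, and then you invoke a bound $x_{[i+1]}-x_{[i]}\le 6\varepsilon^0$ that is simply false in general (the slices can be arbitrarily wide in $x$). The correct mechanism, used in the paper, is to exploit that the \emph{base} slice already carries mass exactly $c/2$: the additional mass picked up by an $\ell$-enlargement is at most $2\|f^0\|_\infty\ell W^0$, so the choice $\ell=c/(4\|f^0\|_\infty W^0)$ keeps the total $\le c$ for every $i$; since $3\varepsilon^0$ is the largest admissible enlargement, $3\varepsilon^0\ge\ell$, which gives $12\varepsilon^0\|f^0\|_\infty W^0\ge c$.

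For the fourth (dichotomy) bullet, your sketch does not use the slice-by-slice timing, and your claim that ``the whole velocity support contracts from both ends'' is not what happens. The control acts on $\omega_{[i]}$ only during $[(i-1)T^0/n,\,iT^0/n)$, so one must follow each set $\Omega_{[i]}(t)$ (the flow-image of the $i$-th rectangle) through three phases: before the active phase it stays in $\R\times[0,W^0]$ by invariance; during the active phase one shows its velocity maximum $b(t)$ either already lies below $W^0-\beta^0$ (and then stays there by invariance of $\R\times[0,\bar v^0+\alpha^0+2\beta^0]$), or else satisfies $b(t)\ge \bar v^0+\alpha^0+2\beta^0$ so that the particle realizing $b(t)$ sits in $B_{[i]}$ (here one needs $|x-\Omega^0_{[i]}|\le W^0T^0\le\varepsilon^0$) and hence $\dot b(t)<-1$; after the active phase, invariance of $\R\times[0,W^0-T^0/n]$ preserves the gain. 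Only one end (top if $\beta^0=\beta^+(0)$, bottom otherwise) is treated this way. Finally, your worry about tracking $\alpha^\pm(t),\beta^\pm(t)$ in time for the invariance bullet is unnecessary: the paper simply uses the fixed crude inclusion $\supp(\mu(t))\subset[0,Y^0+W^0]\times[0,W^0]$ (valid since $T^0\le 1$ and $\R\times[0,W^0]$ is invariant) and applies Lemma~\ref{p-perp} with these fixed parameters; the only time-dependent ingredient is $\bar v(t)$, handled by $|\bar v(t)-\bar v^0|\le cT^0\le\beta^0/2$. (You also correctly spotted the sign tension in $u=\psi\,(v-\bar v^0)/|v-\bar v^0|$; the paper's own proof uses $u=-1$ at the top, i.e.\ the control is meant to point \emph{toward} $\bar v^0$, so treat the displayed sign as a typo rather than trying to argue $\psi=0$ on every boundary, which fails for $k^\pm\in(0,3\beta^0)$.)
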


\begin{proof}
By construction, for every $i\in\{1,\ldots,n\}$, the function $\psi_{[i]}$ is Lipschitz and piecewise $C^\infty$. Therefore, the vector fields $V_{[0]}=(v,\xi[f])$ and $V_{[i]}=(v,\xi[\mu]+\chi_{\omega_{[i]}}u_{[i]})$, $i=1,\ldots,n$, are regular enough to ensure existence and uniqueness of the solution $\mu$ of \eqref{kinetic_CS} over the whole interval $[0,T^0]$. Indeed, it suffices to apply Theorem \ref{t-esistenza} iteratively over each time subinterval $\left[ \frac{(i-1)T^0}{n},\frac{iT^0}{n} \right)$ (with initial datum $\mu((i-1)T^0/n)$), and moreover, the solution $\mu$ remains absolutely continuous and with compact support.

\medskip

We claim that the domain $\R\times[0,W^0]$ is invariant under the controlled particle flow $\Phi_{\omega,u}$. Indeed, the vector fields $\xi(\mu)$ and $u_{[i]}$ (by construction) always point inwards along the boundary of that domain. Since $T^0\leq 1$ by definition, and since $\supp(\mu^0)\subset [0,Y^0]\times[0,W^0]$, it follows that $\supp(\mu(t))\subset [0,Y^0+W^0]\times[0,W^0]$ for every $t\in [0,T^0]$. 

\medskip

Let $k^-$ and $k^+$ be arbitrary nonnegative real numbers. Let us prove that the domain $D_{k^-,k^+} = \R\times[{\bar v}^0-\alpha^0-\beta^0-k^-,{\bar v}^0+\alpha^0+\beta^0+k^+]$ is invariant under the flow $\Phi_{\omega,u}$.
To this aim, it suffices to prove that the velocity vector $\xi[\mu(t)]$ points inwards along the boundary of $D_{k^-,k^+}$, that is, since we are in dimension one,
\begin{eqnarray*}
\displaystyle&&  \xi[\mu(t)](x,{\bar v}^0+\alpha^0+\beta^0+k^+)+\chi_{\omega_{[i]}}(x,{\bar v}^0+\alpha^0+\beta^0+k^+)u_{[i]}(t,x,{\bar v}^0+\alpha^0+\beta^0+k^+) <0, \\ 
\displaystyle&&  \xi[\mu(t)](x,{\bar v}^0-\alpha^0-\beta^0-k^-)+\chi_{\omega_{[i]}}(x,{\bar v}^0-\alpha^0-\beta^0-k^-)u_{[i]}(t,x,{\bar v}^0-\alpha^0-\beta^0-k^-) >0, 
\end{eqnarray*}

We start with the case $k^-=k^+=0$. 
First of all, note that $D_{0,0}\cap\omega_{[i]}=\emptyset$, which means that the control does not act on $D_{0,0}$. Then, it suffices to prove that $\xi[\mu(t)](x,{\bar v}^0+\alpha^0+\beta^0) <0$ and that $\xi[\mu(t)](x,{\bar v}^0-\alpha^0-\beta^0) >0$, for every $t\in[0,T^0]$. To this aim, we first study the evolution of ${\bar v}(t)$. Since $\dot {\bar v}=\int_{\omega} u$ (see Lemma \ref{p-bary}), $\left\vert\int_\omega u\right\vert\leq c$ and $T^0\leq \frac{\beta^0}{2c}$, we get that $\vert {\bar v}(t)-{\bar v}^0\vert \leq \frac{\beta^0}{2}<\beta(t)$. Let $t\in[0,T^0]$ be arbitrary. We assume that ${\bar v}(t)\geq {\bar v}^0$ (the case ${\bar v}(t)\leq {\bar v}^0$ is treated similarly). 
We now make use of Lemma \ref{p-perp}. First, noting that $\supp(\mu(t))\subset [0,Y^0+W^0]\times[0,W^0]$ for every $t\in [0,T^0]$, it follows that the scalar number $r^+$, defined by \eqref{e-perp+}, is equal to $\alpha^+(0)\leq \alpha^0$. Similarly, the scalar number $r^-$, defined by \eqref{e-perp-}, is equal to $\alpha^-(0)\leq \alpha^0$. Both functions $r^+(t)$ and $r^-(t)$ are constant in time along $[0,T^0]$, since the size of the domain has been estimated with constants along that time interval. Now, since ${\bar v}^0+\alpha^0+\beta^0-\bar v(t)\geq \alpha^0\geq r^+$, Lemma \ref{p-perp} implies that $\xi[\mu(t)](x,{\bar v}^0+\alpha^0+\beta^0) <0$. Similarly, since ${\bar v}^0-\alpha^0-\beta^0-\bar v(t)\leq -\alpha^0\leq -r^-$, Lemma \ref{p-perp} implies that $\xi[\mu(t)](x,{\bar v}^0-\alpha^0-\beta^0)>0$.

Similar arguments yield invariance of all domains $\R\times[{\bar v}^0-\alpha^0-\beta^0-k^-,{\bar v}^0+\alpha^0+\beta^0+k^+]$ for arbitrary $k^-\geq 0$ and $k^+\geq 0$. Indeed, we have the same properties of the vector field $\xi[f]$ pointing inwards, with the control $\chi_{\omega_{[i]}}u_{[i]}$ (when it is nonzero) pointing inwards as well.

\medskip

Let us now prove that either $[a^1,a^1+W^1]\subset [0,W^0-\frac{T^0}n]$ or $[a^1,a^1+W^1]\subset [\frac{T^0}n,W^0]$.

We first assume that $\beta^0=\beta^+(0)$. Recall that the set $\Omega_{[i]}(t)$ is defined in Remark \ref{rem_Ben}. 
We distinguish between three cases, according to whether $t\in\left[0,i\frac{T^0}n\right)$, or $t\in\left[i\frac{T^0}n,(i+1)\frac{T^0}n\right)$, or $t\in\left[(i+1)\frac{T^0}n),T^0\right]$.

For every $t\in\left[0,i\frac{T^0}n\right)$, noting that the set $\R\times[0,W^0]$ is invariant, then $\Omega^0_{[i]}\subset \R\times [0,W^0]$ implies that $\Omega_{[i]}\left(i\frac{T^0}{n}\right)\subset \R\times [0,W^0]$.

For every $t\in\left[i\frac{T^0}n,(i+1)\frac{T^0}n\right)$, we define
$$b(t)=\sup \left\{v\in\R \mid (x,v)\in\Omega_{[i]}(t)\right\}$$
so that $\Omega_{[i]}(t)\subset\R\times[a(t),b(t)]$ with $0\leq a(t)\leq b(t)\leq W^0$.
Note that $a(t)\geq 0$ for every $t\in \left[i\frac{T^0}n,(i+1)\frac{T^0}n\right)$ since the set $\R\times [0,W^0]$ is invariant. For $b(t)$, we have two cases.
\begin{itemize}
\item Either $b(t)\leq W^0-\beta^0$ for some $t\in \left[i\frac{T^0}n,(i+1)\frac{T^0}n\right)$. In this case, the set $\R\times [0,{\bar v}^0+\alpha^0+2\beta^0]$ is invariant since both sets $\R\times [0,W^0]$ and $\R\times [{\bar v}^0-\alpha^0-\beta^0,{\bar v}^0+\alpha^0+\beta^0+k^+]$ with $k^+=\beta^0$ are invariant and hence their intersection is invariant as well. Then 
$$
b\left((i+1)\frac{T}n\right)\leq W^0-\beta^0\leq W^0-2T^0c\leq W^0-4\frac{T^0}{n}<W^0-\frac{T^0}{n}.
$$
\item Or $b(t)\geq W^0-\beta^0$ on the whole interval $\left[i\frac{T^0}{n},(i+1)\frac{T^0}{n}\right)$. Since $\beta^0=\beta^+(0)$, we have $W^0-{\bar v}^0\geq {\bar v}^0$, which implies that $\alpha^0=\alpha^+(0)$, and hence that $b(t)\geq {\bar v}^0+\alpha^0+2\beta^0$. Let $(x,v)\in\Omega_{[i]}(t)$ be such that $v=b(t)$. Note that $(x,v)\in\Omega_{[i]}(t)$ implies that $d(x,\Omega_{[i]}^0)\leq W^0 T^0\leq \varepsilon^0$. We also have $v=b(t)\geq {\bar v}^0+\alpha^0+2\beta^0$. The two conditions imply that $\psi_{[i]}(x,v)=1$, which in turn implies that $\chi_{\omega_{[i]}} u_{[i]} (x,v)=-1$. Then, the velocity component of the vector field acting on $(x,v)$ is $\xi[\mu(t)]-1$. Recall that $\xi[\mu(t)](x,v)<0$ because $v-{\bar v}(t)>\alpha^0$. Since this estimate holds for any $(x,v)\in\Omega_{[i]}(t)$ with $v=b(t)$, then $\dot b(t)<- 1$. Since this holds on the whole interval $\left[i\frac{T^0}{n},(i+1)\frac{T^0}n\right)$ and $b\left(i\frac{T^0}{n}\right)\leq W^0$, then $b\left((i+1)\frac{T^0}{n}\right)\leq W^0-\frac{T^0}{n}$.
\end{itemize}
In both cases, we have obtained that $\Omega_{[i]}\left((i+1)\frac{T^0}{n}\right)\subset \R\times \left[0, W^0-\frac{T^0}{n}\right]$.

Finally, for every $t\in\left[(i+1)\frac{T^0}{n},T^0\right]$, since the set $\R\times\left[ 0, W^0-\frac{T^0}{n} \right]$ is invariant, it follows that $\Omega_{[i]}(T^0)\subset \R\times\left[0, W^0-\frac{T^0}{n}\right]$.

Since the estimate holds for all sets $\Omega_{[i]}(t)$, we conclude that the support of $\mu^1=\mu(T^0)$ is contained in $\R\times\left[0, W^0-\frac{T^0}{n}\right]$.

The case where $\beta^0=\beta^-(0)$ is similar, by proving that that $a(T^0)\geq \frac{T^0}{n}$ and $b(t)\leq W^0$.

\medskip

Let us now prove that $\varepsilon^0\geq \frac{c}{12\Vert f^0\Vert _\infty W^0}$.
Consider the mass contained in the set $[x_{[i]}-\ell,x_{[i+1]}+\ell]\times[0,W^0]$, for $\ell\geq 0$. Since the mass contained in $[x_{[i]},x_{[i+1]}]\times[0,W^0]$ is equal to $\frac{c}{2}$, then, with a simple geometric observation, it is clear that the mass contained in $[x_{[i]}-\ell,x_{[i+1]}+\ell]\times[0,W^0]$ is less than or equal to $\frac{c}{2}+2\Vert f^0\Vert _\infty \ell W^0$. Since we want to keep a mass less than or equal to $c$ (this is the control constraint), we need to have $\ell\geq \frac{c}{4\Vert f^0\Vert _\infty W^0}$. Then, we choose $3\varepsilon^0=\ell$.

\medskip

Let us finally prove the last item of the lemma.
The regularity of $\chi_{\omega} u$ is obvious, since $u$ is piecewise constant with respect to $t$ and it is Lipschitz and piecewise $C^\infty$ with respect to $(x,v)$. The constraint \eqref{cont_u} is satisfied by definition of $\psi_{[i]}$. To prove that the constraint \eqref{cont_omega} is satisfied, let us establish the stronger condition $\int_{\omega_{[i]}}f(t)\, \dxv\leq c$ for every $i\in\{1,\ldots,n\}$, where $\mu(t)=f(t)\, \dxv$. Since $\dot x(t)=v(t)\leq W^0$ for every $t\in[0,T^0]$, it follows that the mass can travel along the $x$ coordinate with a distance at most $T^0 W^0\leq \varepsilon^0$. Hence, we have
$$
\int_{\omega_{[i]}}f(t)\,\dxv=\int_{x_{[i]}-2\varepsilon^0}^{x_{[i+1]}+2\varepsilon^0}\int f(t)\, dv\, dx\leq \int_{x_{[i]}-3\varepsilon^0}^{x_{[i+1]}+3\varepsilon^0} \int f^0\, dv\, dx=c.
$$
The lemma is proved.
\end{proof}

\subsubsection{Complete strategy $\SSS$}\label{s-sss}
The complete strategy consists of repeating the fundamental step $\SS$, until reaching a prescribed size $\eta$ of the velocity support. We will then choose $\eta$ satisfying the estimate of Corollary \ref{cor_flocksize}, which ensures flocking.

\paragraph{Complete strategy $\SSS$.}
Let $\mu^0=f^0\,\dxv\in\mathcal{P}^{ac}_c(\R\times\R)$ be such that $\supp(\mu^0) \subset [0,Y^0]\times [0,W^0]$, and let $\eta>0$. We apply the fundamental step $\SS$ iteratively, replacing the superscript $0$ by the superscript $i$: while $W^i> \eta$, we compute $\mu^{i+1}=\mu(\sum_{j=0}^i T^i)$.

\smallskip

Hereafter, we prove that the above iteration terminates.

\begin{lemma}\label{p-SSeta} 
Let $\mu^0=f^0\,\dxv\in\mathcal{P}^{ac}_c(\R\times\R)$ be such that $\supp(\mu^0) \subset [0,Y^0]\times [0,W^0]$, and let $\eta>0$. Then there exists $k\in\N^*$ such that the probability measure $\mu^k=f^k\,\dxv$ computed with the complete strategy $\SSS$, with support contained in $[0,Y^k]\times [a^k,a^k+W^k]$, is such that $W^k\leq \eta$. Moreover, we have $\mu^k=\mu(\sum_{j=0}^k T^j)$ with $\sum_{j=0}^k T^j\leq W^0 \lceil \frac{2}{c}\rceil$, and we have $Y^k\leq Y^0+(W^0)^2 \lceil \frac{2}{c}\rceil$. Furthermore, the control satisfies the constraints \eqref{cont_u} and \eqref{cont_omega}.
\end{lemma}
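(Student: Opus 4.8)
The plan is to iterate the fundamental step $\SS$ and to read the announced estimates off Lemma~\ref{p-SS}; the only genuine point is to prove that the iteration in $\SSS$ stops after finitely many steps.

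Since $\mu^0\in\mathcal{P}^{ac}_c(\R\times\R)$, applying Lemma~\ref{p-SS} inductively shows that every $\mu^i=f^i\,\dxv$ produced by $\SSS$ lies in $\mathcal{P}^{ac}_c(\R\times\R)$, that in the frame adapted to step $i$ one has $\supp(\mu^i)\subset[0,Y^i]\times[a^i,a^i+W^i]$, and that the recursions
$$
W^{i+1}\leq W^i-\frac{T^i}{n},\qquad Y^{i+1}\leq Y^i+T^iW^i,
$$
hold, with $n=\lceil\tfrac2c\rceil$ the same at each step. The second recursion is the quantitative form of the bound $Y^1\leq Y^0+W^0$ of Lemma~\ref{p-SS}: during step $i$ the set $\R\times[0,W^i]$ is invariant, so $|\dot x|=|v|\leq W^i$ and $x$ moves by at most $T^iW^i$. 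Telescoping the first recursion gives $\frac1n\sum_{i=0}^{m-1}T^i\leq W^0-W^m\leq W^0$ for every $m$, hence $\sum_{i=0}^{m-1}T^i\leq nW^0=\lceil\tfrac2c\rceil W^0$, and then the second gives $Y^m\leq Y^0+W^0\sum_{i<m}T^i\leq Y^0+(W^0)^2\lceil\tfrac2c\rceil$. Thus, once termination at some step $k$ is established, both announced bounds follow; and the constraints \eqref{cont_u} and \eqref{cont_omega} hold on $[0,\sum_{j<k}T^j]$ because they hold on each subinterval by Lemma~\ref{p-SS} and are invariant under the translations used in $\SS$.

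To prove termination I would argue by contradiction, assuming $W^i>\eta$ for all $i\in\N$. Then the telescoping bound yields $\sum_{i=0}^\infty T^i\leq nW^0<\infty$, hence $T^i\to 0$, and $Y^i\leq Y^0+(W^0)^2\lceil\tfrac2c\rceil=:Y_{\max}$ for all $i$. Since $\phi$ is positive and nonincreasing, $t\mapsto\phi(t)/(\phi(0)+\phi(t))$ is positive and nonincreasing; combined with $Y^i+W^i\leq Y_{\max}+W^0$ and the explicit formulas for $\alpha^i,\beta^i$ in $\SS$ (which give $\beta^i\geq\frac12(\beta^+_i+\beta^-_i)=\frac16\,\frac{\phi(Y^i+W^i)}{\phi(0)+\phi(Y^i+W^i)}\,W^i$) together with $W^i>\eta$, this produces a uniform lower bound $\beta^i\geq\beta_{\min}>0$. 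The crux is then a bootstrap on $\|f^i\|_{L^\infty}$: during step $i$ the control is of the form $\psi\cdot\frac{v-\bar v^i}{|v-\bar v^i|}$ with $\psi$ continuous, piecewise linear, of slope in $v$ at most $1/\beta^i$ and vanishing near $v=\bar v^i$, so $\|\mathrm{div}_v u(t)\|_{L^\infty(\omega(t))}\leq 1/\beta^i$ throughout step $i$; hence Lemma~\ref{prop_e-Lp} (with $p=+\infty$, $d=1$) and a Gronwall estimate give $\|f^{i+1}\|_{L^\infty}\leq\|f^i\|_{L^\infty}\exp\!\big((\phi(0)+1/\beta_{\min})\,T^i\big)$, and multiplying over $i$ while using $\sum_iT^i\leq nW^0$ yields a uniform bound $\|f^i\|_{L^\infty}\leq F_{\max}$. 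The fifth item of Lemma~\ref{p-SS}, namely $12\,\varepsilon^i\,\|f^i\|_{L^\infty}\,W^i\geq c$, together with $W^i\leq W^0$, then forces $\varepsilon^i\geq c/(12F_{\max}W^0)=:\varepsilon_{\min}>0$, so that
$$
T^i=\min\Big(\frac{\varepsilon^i}{W^i},\,\frac{\beta^i}{2c},\,1\Big)\geq\min\Big(\frac{\varepsilon_{\min}}{W^0},\,\frac{\beta_{\min}}{2c},\,1\Big)>0
$$
for every $i$, contradicting $T^i\to 0$. Hence $\SSS$ terminates, which completes the proof.

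The delicate part — and the main obstacle — is precisely this bootstrap. It is a cascade of implications ($\sum_iT^i$ finite $\Rightarrow$ $Y^i$ bounded $\Rightarrow$ $\beta^i$ bounded below $\Rightarrow$ $\|f^i\|_{L^\infty}$ bounded above $\Rightarrow$ $\varepsilon^i$ bounded below $\Rightarrow$ $T^i$ bounded below), and one must check that each step uses only quantities already controlled at an earlier stage, so that the reasoning is not circular; the point that makes the scheme close is the a priori bound $\sum_iT^i\leq\lceil\tfrac2c\rceil W^0$, which holds with a constant independent of the number of steps performed.
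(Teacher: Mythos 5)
Your proof is correct and follows essentially the same route as the paper's: you telescope $W^{i+1}\leq W^i-T^i/n$ to get the a priori bound $\sum T^i\leq \lceil 2/c\rceil W^0$ (and hence a bound on $Y^i$), and then, assuming the iteration does not stop, you run the same cascade of lower bounds $\beta^i\geq\bar\beta>0$ (via the $L^\infty$ bound on $f^i$ from Lemma~\ref{prop_e-Lp}) $\Rightarrow\varepsilon^i\geq\bar\varepsilon>0$ $\Rightarrow T^i$ bounded below, contradicting $\sum T^i<\infty$. The only cosmetic differences are that you phrase the contradiction as $W^i>\eta$ directly rather than introducing the limit $\bar W$, you bound $\beta^i$ by averaging $\beta^+_i,\beta^-_i$ instead of a case split, and you (correctly) use the constant $12$ from the fifth item of Lemma~\ref{p-SS} where the paper's proof has a slip writing $\varepsilon^i\geq c/(2\|f^i\|_\infty W^i)$; none of these change the argument.
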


\begin{proof}
Let us first prove that the iteration terminates. Assuming that we are at the step $i$ of the iteration, consider the real numbers $\beta^i$, $\varepsilon^i$, $W^i$ and $T^i$ obtained by applying the fundamental step $\SS$ to $\mu^i$. From Lemma \ref{p-SS}, we have $W^{i+1}\leq W^i-\frac{T^i}{n}$. Since $W^j\geq 0$ for every $j$, we have $\sum_{j=1}^i T^j\leq nW^0$ for every $i$. We set $\bar{T}=\sum_{j=1}^\infty T^j$; note that $\bar T\leq nW^0$. 
It follows that $Y^i\leq Y^0+n(W^0)^2$ for every $i$.

The sequence $(W^i)_{i\in\N^*}$ is nonnegative, bounded above (by $W^0$), and is decreasing (since $T^i>0$), therefore it converges to some $\bar W\geq 0$. Let us prove that $\bar W=0$. By contradiction, let us assume that $\bar W>0$. For any given $i$, we have either $W^i-{\bar v}^i\geq \frac{W^i}{2}\geq \frac{\bar{W}}{2}$ or ${\bar v}^i\geq \frac{W^i}{2}\geq \frac{\bar{W}}{2}$. In both cases we have 
$$
\beta^i\geq\frac13 \frac{\phi(Y^i+W^i)}{\phi(0)+\phi(Y^i+W^i)} \frac{\bar{W}}{2}\geq \frac{\phi(Y^0+n(W^0)^2+W_0)}{\phi(0)+\phi(\bar W)} \frac{\bar{W}}{2},
$$
where we have used that $0\leq Y^i\leq Y^0+n(W^0)^2$, that $\bar W\leq W^i\leq W^0$ and that $\phi$ is decreasing. Since the estimate does not depend on $i$, we have obtained that $\beta^i\geq \bar\beta$ for every $i$, with $\bar \beta=\frac{\phi(Y^0+n(W^0)^2+W^0)}{\phi(0)+\phi(\bar W)} \frac{\bar{W}}{2}>0$.
Recalling that $\mu(t)=f(t)\,\dxv$, let us consider the function $t\mapsto\Vert f(t)\Vert _\infty$ on the interval $[0,\bar T)$ (note that the interval is open at $\bar T$ because we have not yet proved the convergence of the complete strategy). Using the definition of $\psi^i$, we get
$$
\left\Vert \mathrm{div}_v ( u^i_{[k]} ) \right\Vert _{L^\infty(\omega^i_{[k]})}
= \left\Vert \partial_v\left( \psi^i_{[k]}(x,v)\frac{v-{\bar v}^i}{|v-{\bar v}^i|}\right) \right\Vert _{L^\infty(\omega^i_{[k]})} \\
\leq  \frac{1}{\beta^i}+1 \leq \frac{1}{\bar\beta}+1  ,
$$
for every $t\in[0,\bar T)$. Then, applying the estimate \eqref{e-Lp} of Lemma \ref{prop_e-Lp}, we get $\Vert f(t)\Vert _{L^\infty}\leq \bar F$ with $\bar F= \Vert  f^0\Vert _{L^\infty} \exp((\phi(0)+1/\bar\beta+1)\bar T) <+\infty$. It follows that $\Vert f^i\Vert _\infty\leq \bar F$ for every $i$, which implies, by Lemma \ref{p-SS}, that $\varepsilon^i\geq \frac{c}{2\Vert f^i\Vert _\infty W^i}\geq \bar \varepsilon$ with $\bar \varepsilon=\frac{c}{2 \bar F W^0}>0$.
At this step, we have obtained that $\beta^i\geq \bar\beta$ and $\varepsilon^i\geq \bar\varepsilon$ for every $i$, and besides, we have $W^i\leq W^0$ for every $i$. Therefore $T^i=\min\left(\frac{\varepsilon^i}{W^i},\frac{\beta^i}{2c},1\right)\geq \min\left(\frac{\bar\varepsilon}{W^0},\frac{\bar\beta}{2c},1\right)$ does not converge to $0$, and hence $\bar T = \sum_{j=1}^\infty T^j=+\infty$. This contradicts the fact that $\bar T\leq nW^0$. We conclude that $\bar W=0$.

Since $W^i$ converges to $0$ as $i$ tends to $+\infty$, it follows that there exists $k\in\N^*$ such that $W^k<\eta$. This means that the iterative procedure terminates.

Recalling that $n=\lceil \frac{c}{2} \rceil$, the above arguments show that $\mu^k=\mu(\sum_{j=0}^k T^j)$ with $\sum_{j=0}^k T^j\leq W^0 \lceil \frac{2}c\rceil$, and we have $Y^k\leq Y^0+(W^0)^2 \lceil \frac{2}c\rceil$.

Finally, the constraints on the control follow from an iterative application of Lemma \ref{p-SS}.
\end{proof}

We now use this strategy to prove controllability to flocking, for any initial configuration $\mu^0\in\mathcal{P}^{ac}_c(\R\times\R)$.

\begin{theorem}[Flocking in 1D] 
Let $\mu^0\in\mathcal{P}^{ac}_c(\R\times\R)$ be such that $\supp(\mu^0)\subset[0,Y^0]\times [0,W^0]$. Let $c>0$ be arbitrary.
Then the complete strategy $\SSS$, applied with 
$$
\eta=\frac{1}{2}\int_{2\left(Y^0+\lceil\frac{c}2\rceil (W^0)^2\right)}^{+\infty} \phi(2x)\,dx ,
$$
provides a control satisfying the constraints \eqref{cont_u} and \eqref{cont_omega}, which steers the system from $\mu^0$ to the flocking region in time $T\leq W^0 \lceil \frac{2}{c}\rceil$. Then $\mu(t)$ converges to flocking.
\end{theorem}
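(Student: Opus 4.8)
The plan is to assemble the two structural results already proved: Lemma~\ref{p-SSeta}, which shows that the complete strategy $\SSS$ terminates after finitely many iterations with explicit bounds on the total elapsed time, on the final velocity-support size, and on the final space-support size; and Corollary~\ref{cor_flocksize}, which gives a sufficient condition under which the \emph{uncontrolled} equation \eqref{kinetic_CS} converges to flocking. The control we exhibit is the one generated by $\SSS$ on $[0,T]$, prolonged by $u\equiv 0$ for $t\geq T$.

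First I would run the complete strategy $\SSS$ on $\mu^0$ with the prescribed threshold $\eta$. By Lemma~\ref{p-SSeta} this terminates: there is $k\in\N^*$ such that the resulting measure $\mu^k=\mu(T)$, with $T=\sum_{j=0}^k T^j\leq W^0\lceil 2/c\rceil$, lies in $\mathcal{P}^{ac}_c(\R\times\R)$, has $\supp(\mu^k)\subset[0,Y^k]\times[a^k,a^k+W^k]$ with $W^k\leq\eta$ and $Y^k\leq Y^0+(W^0)^2\lceil 2/c\rceil$, and the control $\chi_\omega u$ built on $[0,T]$ satisfies the constraints \eqref{cont_u} and \eqref{cont_omega}. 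Since we switch the control off after time $T$, these constraints trivially persist for all $t\geq 0$. Moreover, iterating Lemma~\ref{p-SS} over $[0,T]$ and invoking Theorem~\ref{t-esistenza} on $[T,+\infty)$ (with $u\equiv 0$), the solution $\mu$ stays absolutely continuous and compactly supported, so $\mu\in C^0([0,+\infty),\mathcal{P}^{ac}_c(\R\times\R))$.

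Next I would verify the hypothesis of Corollary~\ref{cor_flocksize} for $\mu(T)=\mu^k$, now viewed as the Cauchy datum for \eqref{kinetic_CS} with $u\equiv 0$. Taking $\tilde X^0=Y^k/2$ (positive since $\mu^k$ is absolutely continuous) with center $Y^k/2$, and $\tilde V^0=W^k/2$ with center $a^k+W^k/2$, we have $\supp(\mu^k)\subset B(Y^k/2,\tilde X^0)\times B(a^k+W^k/2,\tilde V^0)$. Because $\phi$ is nonnegative and $Y^k\leq Y^0+(W^0)^2\lceil 2/c\rceil$, which is dominated by the lower limit of the integral defining $\eta$, we obtain
$$
\int_{2\tilde X^0}^{+\infty}\phi(2x)\,dx=\int_{Y^k}^{+\infty}\phi(2x)\,dx\ \geq\ 2\eta\ \geq\ 2W^k\ \geq\ W^k=2\tilde V^0 ,
$$
so that \eqref{e-flock2} holds. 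Corollary~\ref{cor_flocksize} then yields that the solution of \eqref{kinetic_CS} with $u\equiv 0$ issued from $\mu^k$ converges to flocking; since that solution is exactly $t\mapsto\mu(T+t)$, we conclude that $\mu(t)$ converges to flocking as $t\to+\infty$, which, together with $T\leq W^0\lceil 2/c\rceil$, proves the theorem.

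The only point requiring genuine care — everything else being a direct appeal to Lemmas~\ref{p-SS}, \ref{p-SSeta} and Corollary~\ref{cor_flocksize} — is the a priori control of the space support $Y^k$ at the (data-dependent, not known in advance) termination index $k$: the threshold $\eta$ has to be fixed beforehand, so one must know that $Y^k$ cannot exceed $Y^0+(W^0)^2\lceil 2/c\rceil$ regardless of how many steps the strategy takes, which is precisely the estimate extracted in Lemma~\ref{p-SSeta} from $Y^{i+1}\leq Y^i+W^iT^i$ and $\sum_j T^j\leq W^0\lceil 2/c\rceil$. Once this bound is available, the flocking criterion \eqref{e-flock2} is met by the very choice of $\eta$, and the conclusion follows.
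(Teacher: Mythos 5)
Your proof is correct and follows essentially the same route as the paper: iterate $\SSS$ until $W^k\leq\eta$, appeal to Lemma~\ref{p-SSeta} for the a priori bound $Y^k\leq Y^0+(W^0)^2\lceil 2/c\rceil$ and for the time bound, and then invoke Corollary~\ref{cor_flocksize}. You also implicitly correct a typo: the factor $\lceil c/2\rceil$ in the stated formula for $\eta$ (and in the paper's own proof) should read $\lceil 2/c\rceil$, in agreement with Lemma~\ref{p-SSeta} and with the definition $n=\lceil 2/c\rceil$ in the fundamental step, and without this correction the argument would not close for $c<2$, since then $\lceil c/2\rceil=1<\lceil 2/c\rceil$ and the bound on $Y^k$ from Lemma~\ref{p-SSeta} would not dominate the lower limit of the integral defining $\eta$.
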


\begin{proof}
Applying the strategy $\SSS$ with the given $\eta$ yields $\mu^k$ such that $W^k\leq \eta$. By Lemma \ref{p-SSeta}, we have $Y^k\leq Y^0+\lceil\frac{c}{2}\rceil (W^0)^2$, and hence
$$
2 W^k\leq 2\eta
=\int_{2(Y^0+\lceil\frac{c}2\rceil (W^0)^2)}^\infty \phi(2x)\,dx
\leq \int_{2 Y^k}^\infty \phi(2x)\,dx.
$$
Then, it follows from Corollary \ref{cor_flocksize} that $\mu(t)$ converges to flocking. The estimate on $T$ is given by Lemma \ref{p-SSeta}. 
\end{proof}

\subsection{Proof of Theorem \ref{mainthm} in dimension $d>1$}\label{s-SSd}
In dimension larger than one, we adapt the fundamental step $\SS$ and the complete strategy $\SSS$ of the one-dimensional case, as follows.

First of all, let us focus on a given coordinate. Let $j\in\{1,\ldots,d\}$ be arbitrary. Below, we describe the fundamental step $\SS_j$, adapted from the fundamental step $\SS$ in 1D.

\paragraph{Fundamental step $\SS_j$ for the $j^\textrm{th}$ component.} 
Let $\mu^0\in\mathcal{P}^{ac}_c(\R^d\times\R^d)$ be an initial datum. Using translations, we assume that $\supp(\mu^0)\subset \prod_{j=1}^d [0,Y^0_j]\times \prod_{j=1}^d [0,W^0_j]$, where $Y_j^0$ is the size of the support in the variable $x_j$ and $W_j^0$ is the size of the support in the variable $v_j$. As for the case $d=1$, admitting temporarily that the control that we will define produces a well defined absolutely continuous solution $\mu(t)$, we assume that 
$\supp(\mu(t))\subset \prod_{j=1}^d [0,Y_j(t)]\times \prod_{j=1}^d [a_j(t),a_j(t)+W_j(t)]$. We have $Y_j(0)=Y_j^0$, $a_j(0)=0$ and $W_j(0)=W_j^0$, for $j=1,\ldots,d$.
We define the functions
$$
\mathcal{Y}(t)=\sqrt{d} \prod_{j=1}^d  Y_j(t) ,\qquad
\mathcal{W}(t)= \sqrt{d} \prod_{j=1}^d (Y_j(t)+W_j(t))  -\mathcal{Y}(t) ,
$$
and we set $\mathcal{Y}^0=\mathcal{Y}(0)$ and $\mathcal{W}^0=\mathcal{W}(0)$.

We define the fundamental step $\SS_j$ similarly to $\SS$, with the following changes:
\begin{itemize}
\item $\alpha^+(t)=\frac{\phi(0)}{\phi(0)+\phi(\mathcal{Y}(t)+\mathcal{W}(t))} (W_j(t)+a_j(t)-{\bar v}_j(t))$, and similarly for $\beta^+(t)$, $\alpha^-(t)$ and $\beta^-(t)$;
\item the rectangle sets $\Omega_{[i]}$ are defined by
$$
\Omega_{[i]} = [0,Y_1^0]\times\cdots\times [0,Y_{j-1}^0]\times[x_{j,[i-1]},x_{j,[i]}]\times[0,Y_{j+1}^0]\times\cdots\times[0,Y_d^0]\times \prod_{j=1}^d [0,W_j^0],
$$ 
with the same mass requirements;
\item $\varepsilon^0$ is the largest positive real number such that
$$
\mu^0(\R\cdots\times [x_{j,[i-1]}-3\varepsilon^0,x_{j,[i]}+3\varepsilon^0]\times\cdots\times\R)\leq c, \quad\forall i\in\{1,\ldots,n\}.
$$
\item similarly, $\omega_{[i]}$ is defined with the interval $[x_{j,[i-1]}-2\varepsilon^0,x_{j,[i]}+2\varepsilon^0]$ on the $j$-th coordinate only;
\item the function $\psi_{[i]}$ is defined as in the 1D case, but depending on the coordinates $(x_j,v_j)$ only;
\item we define $u(t,x,v)=\psi(t,x,v)\frac{v_j-{\bar v}_j(0)}{|v_j-{\bar v}_j(0)|}$;
\end{itemize}

\begin{lemma}[fundamental step $\SS_j$ for the $j^\textrm{th}$ component.]\label{p-SSj}
The statement of Lemma \ref{p-SS} holds true for the fundamental step $\SS_j$, with the following changes:
\begin{itemize}
\item $Y_l^1\leq Y_l^0+W_l^0$, for $l=1,\ldots,n$;
\item the domain $\R^d \times \R^{j-1}\times [{\bar v}_j(0)-\alpha^0-\beta^0-k^-,{\bar v}_j(0)+\alpha^0+\beta^0+k^+]\times \R^{d-j-1}$ is invariant under the controlled particle flow $\Phi_{\omega,u}(t)$, for all $k^-\geq 0$ and $k^+\geq 0$;
moreover, all sets $\R^d \times \R^{l-1}\times [0,W_l^0]\times \R^{d-l-1}$, $l=1,\ldots, d$, are invariant as well;
\item either $[a_j^1,a_j^1+W^1_j]\subset [0,W^0_j-\frac{T^0}{n}]$ or $[a_j^1,a_j^1+W^1_j]\subset [\frac{T^0}{n},W^0_j]$, which implies that $W^1_j\leq W^0_j-\frac{T^0}{n}$;
\item $\displaystyle 12\varepsilon^0 \Vert f^0\Vert_{L^\infty(\R^d\times\R^d)} \prod_{\stackrel{k=1}{k\neq j}}^{d} Y_k^0 \prod_{k=1}^d  W_k^0\geq c$;
\end{itemize}
\end{lemma}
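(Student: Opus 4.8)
The plan is to run, coordinate by coordinate, exactly the argument that proves Lemma~\ref{p-SS}, the sole novelty being that the pair $(Y(t),W(t))$ of the one-dimensional construction is replaced by the scalar auxiliary quantities $(\mathcal{Y}(t),\mathcal{W}(t))$, whose purpose is to furnish a single bound on $\|x-y\|$ valid in every direction on $\supp(\mu(t))$. First I would settle well-posedness as in Lemma~\ref{p-SS}: the control being $u=\psi_{[i]}(x,v)\,\frac{v_j-\bar v_j(0)}{|v_j-\bar v_j(0)|}$ acting on the $j$-th velocity component only, with $\psi_{[i]}$ Lipschitz, piecewise $C^\infty$ and piecewise constant in $t$, the velocity field $V_{\omega,u}[\mu(t)]$ meets the hypotheses of Theorem~\ref{t-esistenza} on each subinterval $[\tfrac{(i-1)T^0}{n},\tfrac{iT^0}{n})$; iterating Corollary~\ref{cor_appli_CS_control} gives a unique $\mu\in C^0([0,T^0],\mathcal{P}^{ac}_c(\R^d\times\R^d))$, equal to the pushforward of $\mu^0$ under $\Phi_{\omega,u}$.

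Next I would prove the invariance of the velocity slabs. For each $l$, the set $\R^d\times\R^{l-1}\times[0,W_l^0]\times\R^{d-l-1}$ is invariant under $\Phi_{\omega,u}$: by Lemma~\ref{lem_trivial} the $e_{d+l}$-component of $\xi[\mu(t)]$ is $\le 0$ on the face $v_l=W_l^0$ and $\ge 0$ on the face $v_l=0$, while the control has no $e_{d+l}$-component when $l\neq j$, and for $l=j$ it points toward $\bar v_j^0\in(0,W_j^0)$ because $0\le\psi_{[i]}\le1$; thus the field points inward on both faces. Intersecting these slabs, $\R^d\times\prod_l[0,W_l^0]$ is invariant, and since $T^0\le1$ and $\dot x_l=v_l\in[0,W_l^0]$ the spatial support gains at most $W_l^0$ in the $x_l$-direction, so $Y_l^1\le Y_l^0+W_l^0$ and $\supp(\mu(t))\subset\prod_l[0,Y_l^0+W_l^0]\times\prod_l[0,W_l^0]$ for all $t\in[0,T^0]$. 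In particular $\|x-y\|\le\mathcal{Y}^0+\mathcal{W}^0$ on $\supp(\mu(t))$, and $\mathcal{Y}(t)+\mathcal{W}(t)\le\mathcal{Y}^0+\mathcal{W}^0$; this is what lets one freeze $\alpha^\pm,\beta^\pm$ at $t=0$, up to $\alpha^\pm(t)\le\alpha^0$ and $\beta^\pm(t)\le\beta^0$.

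Then I would establish the invariance of the $j$-th corridor $\R^d\times\R^{j-1}\times[\bar v_j^0-\alpha^0-\beta^0-k^-,\bar v_j^0+\alpha^0+\beta^0+k^+]\times\R^{d-j-1}$: apply Lemma~\ref{p-perp} with $k=j$ and the uniform bound $2X\le\mathcal{Y}^0+\mathcal{W}^0$, so that the thresholds $r^\pm$ of \eqref{e-perp+}--\eqref{e-perp-} equal $\alpha^\pm(0)\le\alpha^0$; moreover $\dot{\bar v}(t)=\int_{\omega(t)}u$ has only a $j$-th component of modulus $\le1$ on a set of mass $\le c$, so with $T^0\le\tfrac{\beta^0}{2c}$ one gets $|\bar v_j(t)-\bar v_j^0|\le\tfrac{\beta^0}{2}$, and Lemma~\ref{p-perp} yields the correct sign of $\langle\xi[\mu(t)],e_{d+j}\rangle$ on the faces $v_j=\bar v_j^0\pm(\alpha^0+\beta^0+k^\pm)$ (where $\psi_{[i]}=0$ when $k^\pm=0$, these faces being disjoint from $\omega_{[i]}$, and where the control, when nonzero for $k^\pm>0$, points inward). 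The remaining and most delicate point is $W_j^1\le W_j^0-\tfrac{T^0}{n}$: here I would transcribe the three-stage analysis of the proof of Lemma~\ref{p-SS}, applied to $\Omega_{[i]}(t)$ (the $\Phi_{\omega,u}$-image of the rectangle $\Omega_{[i]}^0$) and to $(x_j,v_j)$ — slab invariance covers $t\in[0,\tfrac{iT^0}{n})$ and $t\in[\tfrac{(i+1)T^0}{n},T^0]$, and on the middle interval, at a point $(x,v)\in\Omega_{[i]}(t)$ realizing the supremum of $v_j$ one has $d(x_j,\Omega_{[i]}^0)\le W_j^0T^0\le\varepsilon^0$ and $v_j\ge\bar v_j^0+\alpha^0+2\beta^0$, hence $\psi_{[i]}(x,v)=1$ and $\chi_{\omega_{[i]}}u_{[i]}=-e_{d+j}$ there, so that with $\langle\xi[\mu(t)],e_{d+j}\rangle<0$ the upper $v_j$-face of $\Omega_{[i]}(t)$ recedes at speed $>1$. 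Finally, the bound on $\varepsilon^0$ comes from the same volume estimate as in 1D with the cross-section $[0,W^0]$ replaced by $\prod_{k\neq j}[0,Y_k^0]\times\prod_k[0,W_k^0]$, and \eqref{cont_u}--\eqref{cont_omega} follow as before — \eqref{cont_u} from $0\le\psi_{[i]}\le1$, \eqref{cont_omega} from the $3\varepsilon^0$ margin and the fact that mass travels at most $T^0W_j^0\le\varepsilon^0$ in $x_j$.

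I expect the genuine obstacle to be, as already in dimension one, this middle-stage estimate for $W_j^1$: one must control simultaneously the $x_j$-displacement of the extremal particles of $\Omega_{[i]}(t)$ and check that they lie where $\psi_{[i]}\equiv1$, while $\mathcal{Y},\mathcal{W},\bar v_j$ themselves drift; the whole point of the preceding steps is to neutralize those drifts by freezing all quantities at $t=0$ at the price of the one-sided inequalities $\alpha^\pm(t)\le\alpha^0$, $\beta^\pm(t)\le\beta^0$.
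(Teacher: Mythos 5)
Your proposal is correct and follows exactly the approach the paper intends: the paper's own proof is a one-line sketch (``similar to the one of Lemma~\ref{p-SS} and is skipped''), noting only that $T^0\le 1$ keeps the spatial support bounded so that the quantity $\mathcal{Y}^0+\mathcal{W}^0$ can replace $Y^0+W^0$ in the estimates and that the computation of $\alpha^\pm,\beta^\pm$ then yields the invariance properties. Your coordinate-by-coordinate transcription of the 1D argument, invoking Lemmas~\ref{lem_trivial} and~\ref{p-perp} in the same way and reproducing the three-stage time analysis for the decrease of $W_j$, is precisely the intended filling-in of that skipped proof.
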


\begin{proof}
The proof is similar to the one of Lemma \ref{p-SS} and is skipped. Notice that $T^0\leq 1$ implies that the size of the support in the spatial variable $x_j$ increases from $Y_i^0$ at most to $Y_i^0+W_i^0$, which implies that the size of the spatial support is at most $\mathcal{Y}^0+\mathcal{W}^0$. The computation of $\alpha^+$, $\beta^+$, $\alpha^-$, $\beta^-$, gives all invariance properties.
\end{proof}

\paragraph{Complete strategy $\SSS_j$ for the $j^\textrm{th}$ component.}
Let $\eta>0$. We apply the fundamental step $\SS_j$ iteratively: while $W_j^i > \eta$, we compute $\mu^{i+1}=\mu(\sum_{l=0}^i T^l)$.

\smallskip

With arguments similar to the ones used to prove Lemma \ref{p-SSeta}, we establish that the above iteration terminates.

\begin{lemma}\label{p-SSjeta}
The statement of Lemma \ref{p-SSeta} holds true for the complete strategy $\SSS_j$, with the following changes:
\begin{itemize}
\item for every $j\in\{1,\ldots,d\}$, there exists $k_j\in\N^*$ such that the probability measure $\mu^{k_j}=f^{k_j}\,\dxv$, with support contained in $\left[0,Y_j^{k_j}\right]\times \left[a_j^{k_j},a_j^{k_j}+W_j^{k_j}\right]$, is such that $W_j^{k_j}\leq \eta$;
\item $\mu^{k_j}=\mu(\sum_{l=0}^{k_j} T^l)$ with $\sum_{l=0}^{k_j} T^l\leq W^0_j \lceil \frac{2}{c}\rceil$;
\item $Y^k_l\leq Y^0_l+W^0_l W^0_j \lceil \frac{2}{c}\rceil$, for every $l\in\{1,\ldots, d\}$.
\end{itemize}
\end{lemma}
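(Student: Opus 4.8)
The plan is to transpose, almost verbatim, the proof of Lemma~\ref{p-SSeta} to the $j$-th component, the only genuinely new point being the control of the spatial spreading, which now enters the argument of $\phi$ through the product quantities $\mathcal{Y}(t)+\mathcal{W}(t)=\sqrt{d}\prod_{l=1}^d(Y_l(t)+W_l(t))$. First I would record, exactly as in the one-dimensional case, that by Lemma~\ref{p-SSj} each application of $\SS_j$ satisfies $W_j^{i+1}\leq W_j^i-T^i/n$ with $n=\lceil 2/c\rceil$; telescoping gives $\sum_i T^i\leq n W_j^0$, so that the total elapsed time is bounded a priori by $\lceil 2/c\rceil W_j^0$, and the nonnegative nonincreasing sequence $(W_j^i)$ converges to some $\bar W_j\geq 0$. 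The whole point is then to prove that $\bar W_j=0$.

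Assume by contradiction that $\bar W_j>0$. The key preliminary estimate, and the only place where $d>1$ genuinely differs from $d=1$, is a uniform bound on $\mathcal{Y}^i+\mathcal{W}^i$. By the invariance of the boxes $\R^d\times\R^{l-1}\times[0,W_l^0]\times\R^{d-l-1}$ stated in Lemma~\ref{p-SSj}, one has $W_l^i\leq W_l^0$ for every $i$ and every $l$; and since during each step the $x_l$-displacement is at most $W_l^0 T^i$ (velocity times time), summing over steps gives $Y_l^i\leq Y_l^0+W_l^0\lceil 2/c\rceil W_j^0$ for every $i$. Hence $\mathcal{Y}^i+\mathcal{W}^i$ is bounded above by a constant independent of $i$, and since $\phi$ is positive and nonincreasing, $\phi(\mathcal{Y}^i+\mathcal{W}^i)$ is bounded below by a positive constant. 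Arguing then as in the 1D case (at each step either $W_j^i-{\bar v}_j^i\geq W_j^i/2\geq\bar W_j/2$ or ${\bar v}_j^i-a_j^i\geq W_j^i/2\geq\bar W_j/2$), this yields $\beta^i\geq\bar\beta$ for every $i$, with $\bar\beta>0$ independent of $i$.

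From here the contradiction closes exactly as in Lemma~\ref{p-SSeta}. Since $\Vert\mathrm{div}_v(u^i_{[k]})\Vert_{L^\infty(\omega^i_{[k]})}\leq 1/\beta^i+1\leq 1/\bar\beta+1$, the estimate~\eqref{e-Lp} of Lemma~\ref{prop_e-Lp} applied with $p=+\infty$ gives a uniform bound $\Vert f^i\Vert_{L^\infty}\leq\bar F<+\infty$ over the time interval $[0,\bar T)$, where $\bar T=\sum_i T^i\leq nW_j^0$; plugging this together with the uniform bounds on $Y_l^i$ and $W_l^i$ into the last item of Lemma~\ref{p-SSj} yields $\varepsilon^i\geq\bar\varepsilon>0$ uniformly in $i$. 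Consequently $T^i=\min(\varepsilon^i/W_j^i,\beta^i/(2c),1)$ is bounded below by a positive constant, so $\sum_i T^i=+\infty$, contradicting $\sum_i T^i\leq nW_j^0$. Therefore $\bar W_j=0$, which forces the iteration defining $\SSS_j$ to stop at some step $k_j$ with $W_j^{k_j}\leq\eta$. The remaining assertions are then immediate: $\mu^{k_j}=\mu(\sum_{l=0}^{k_j}T^l)$ with $\sum_{l=0}^{k_j}T^l\leq W_j^0\lceil 2/c\rceil$ by the telescoping inequality, and $Y_l^{k_j}\leq Y_l^0+W_l^0W_j^0\lceil 2/c\rceil$ by the velocity bound, while the constraints~\eqref{cont_u} and~\eqref{cont_omega} follow by iterating the corresponding item of Lemma~\ref{p-SSj}. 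The main obstacle is precisely the uniform bound on $\mathcal{Y}^i+\mathcal{W}^i$: there is an apparent circularity, since controlling $\beta^i$ (hence the Lipschitz constant of the control, hence $\Vert f^i\Vert_\infty$, hence $\varepsilon^i$, hence $T^i$) requires the spatial support to stay bounded, whereas the spatial support a priori grows with the elapsed time; this is broken by the observation that the elapsed time is bounded a priori by $\lceil 2/c\rceil W_j^0$ and that, by the invariance properties of Lemma~\ref{p-SSj}, the velocity supports of the untouched coordinates never grow.
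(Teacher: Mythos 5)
Your proof is correct and takes essentially the same route the paper intends: the paper does not write out this proof, stating only that it follows "with arguments similar to" Lemma~\ref{p-SSeta}, and your transposition correctly supplies the details, in particular the one genuinely new point—the uniform bound on $\mathcal{Y}^i+\mathcal{W}^i$ obtained by combining the invariance of the boxes $\R^d\times\R^{l-1}\times[0,W_l^0]\times\R^{d-l-1}$ (giving $W_l^i\leq W_l^0$) with the a priori time bound $\sum_i T^i\leq \lceil 2/c\rceil W_j^0$ (giving $Y_l^i\leq Y_l^0+W_l^0W_j^0\lceil 2/c\rceil$), after which the lower bounds $\beta^i\geq\bar\beta$, $\|f^i\|_\infty\leq\bar F$, $\varepsilon^i\geq\bar\varepsilon$, and the contradiction $\sum_i T^i=+\infty$ go through exactly as in the one-dimensional case.
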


\paragraph{Complete strategy $\SSS_*$.}
The complete strategy consists of applying successively the strategies $\SSS_j$, for $j=1,\ldots,d$.
In other words, by iteration on each component, we reduce the size of the velocity support in this component (with a bound $\eta$). In this process, the velocity support in the other components does not increase (but the spatial support may increase, according to Lemma \ref{p-SSj}).
At the end of these $d$ iterations, the velocity support is small enough (with a bound $\eta$) in all components. 
If $\eta$ is adequately chosen then this means that we have reached the flocking region.
Then, as in the 1D case, it follows from Corollary \ref{cor_flocksize} that $\mu(t)$ converges to flocking.

\begin{theorem}[Flocking in multi-D] 
Let $\mu^0\in\mathcal{P}^{ac}_c(\R^{2})$ be such that $\supp(\mu^0)\subset \prod_{j=1}^d [0,Y_j^0]\times \prod_{j=1}^d [0,W_j^0]$. Let $c>0$ be arbitrary.
We set
$$
W_*=\lceil \frac{2}c\rceil\sum_{j=1}^d W_j^0,\qquad 
\tilde{W}=\sqrt{d} \prod_{j=1}^d (Y_j^0+W_j^0W_*).
$$
Then the strategy $\SSS$, applied with 
$$
\eta=\frac{1}{2\sqrt{d}}\int_{\tilde{W}}^\infty \phi(2x)\,dx,
$$ 
provides a Lipschitz control satisfying the constraints $\mu(t)(\omega(t))\leq c$ and $\Vert u(t)\Vert_{L^\infty(\R^d\times\R^d)}\leq 1$, which steers the system \eqref{kinetic_CS} from $\mu^0$ to the flocking region in time less than or equal to $W_*$. Then $\mu(t)$ converges to flocking.
\end{theorem}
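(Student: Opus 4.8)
The plan is to unwind the definition of the strategy $\SSS_*$ and to combine the componentwise Lemmas \ref{p-SSj} and \ref{p-SSjeta} with the flocking criterion of Corollary \ref{cor_flocksize}. Recall first that $\SSS_*$ consists in running $\SSS_1,\dots,\SSS_d$ one after the other. By Lemma \ref{p-SSjeta}, each $\SSS_j$ halts after finitely many iterations of the fundamental step $\SS_j$, producing a measure whose velocity support in the $j$-th component has size $\le\eta$, within an elapsed time at most $W_j^0\lceil 2/c\rceil$; the corresponding control is Lipschitz, satisfies \eqref{cont_u} and \eqref{cont_omega}, and keeps the solution in $\mathcal{P}^{ac}_c(\R^d\times\R^d)$. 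Concatenating the $d$ phases, $\SSS_*$ therefore halts at some time $T\le\sum_{j=1}^d W_j^0\lceil 2/c\rceil=W_*$, with a control enjoying the same regularity and constraints.

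The key point is a \emph{non-interference} property: running $\SSS_j$ does not enlarge the velocity support in the other components $l\neq j$. Indeed, by construction the control used in $\SS_j$ acts in the $v_j$-direction only (it is $\psi\,(v_j-\bar v_j(0))/|v_j-\bar v_j(0)|$), so it leaves the $v_l$-coordinates untouched, while the interaction field $\xi[\mu]$ points inward along every velocity face (Lemma \ref{lem_trivial}); hence the interval spanned by $v_l$ over $\supp(\mu(t))$ is non-expanding during $\SS_j$, and the invariance statements of Lemma \ref{p-SSj} keep each $v_l$ inside $[0,W_l^0]$ throughout. Consequently, after the $j$-th phase the bound $W_l\le\eta$ already gained for $l<j$ persists, and at time $T$ we have $W_j(T)\le\eta$ for \emph{all} $j\in\{1,\dots,d\}$ simultaneously. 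Moreover, since $|\dot x_l(t)|=|v_l(t)|\le W_l^0$ for every $t\in[0,T]$ and $T\le W_*$, the spatial support grows by at most $W_l^0 W_*$ in each coordinate, so that
$$
\supp(\mu(T))\ \subseteq\ \prod_{l=1}^d[0,Y_l^0+W_l^0W_*]\ \times\ \prod_{l=1}^d[a_l(T),a_l(T)+\eta].
$$

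It remains to close the argument with Corollary \ref{cor_flocksize}. The spatial factor above is contained in a ball of radius $\tilde X^0\le\tfrac12\sqrt{\textstyle\sum_l(Y_l^0+W_l^0W_*)^2}\le\tfrac12\tilde W$, and the velocity factor in a ball of radius $\tilde V^0\le\tfrac12\sqrt{\textstyle\sum_l W_l(T)^2}\le\tfrac{\sqrt d}{2}\,\eta$. With the prescribed value $\eta=\tfrac{1}{2\sqrt d}\int_{\tilde W}^{\infty}\phi(2x)\,dx$, and using that $\phi\ge 0$ together with $2\tilde X^0\le\tilde W$, we obtain
$$
2\tilde V^0\ \le\ \sqrt d\,\eta\ =\ \tfrac12\int_{\tilde W}^{\infty}\phi(2x)\,dx\ \le\ \int_{\tilde W}^{\infty}\phi(2x)\,dx\ \le\ \int_{2\tilde X^0}^{\infty}\phi(2x)\,dx,
$$
which is exactly condition \eqref{e-flock2} for $\mu(T)$. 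Hence $\mu(T)$ lies in the flocking region, and switching off the control for $t>T$ yields, by Corollary \ref{cor_flocksize} (equivalently Theorem \ref{c-flocksize}), that $\mu(t)$ converges to flocking. I expect the main obstacle to be precisely this non-interference and bookkeeping step: one must verify at once that the velocity reductions obtained in the earlier coordinates are not undone during the later phases, and that the unavoidable growth of the spatial support remains within the quantitative bound $\tilde W$ that makes the flocking criterion close; the rest is concatenation of the componentwise estimates already available from Lemmas \ref{p-SSj}--\ref{p-SSjeta}.
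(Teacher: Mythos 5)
Your proof is correct and follows essentially the same route as the paper: concatenate the componentwise strategies $\SSS_1,\dots,\SSS_d$, use Lemmas \ref{p-SSj}--\ref{p-SSjeta} to get the time bound $W_*$, the non-interference of velocity components, and the bound $Y_l^0+W_l^0W_*$ on the spatial support, then verify condition \eqref{e-flock2} of Corollary \ref{cor_flocksize} with the prescribed $\eta$. The only difference is that you spell out the non-interference and the ball-containment bookkeeping more explicitly than the paper (which just asserts the containment in $B(\tilde x,\tilde W/2)\times B(\tilde v,\sqrt d\,\eta/2)$), but the underlying estimates are identical.
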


\begin{proof}
Let us consider the $j^\textrm{th}$ step of the strategy, along which we apply $\SSS_j$, and at the end of which we have obtained $\mu^j$. By construction, the velocity size in the $j$-th component is less or equal than $\eta$, while the velocity size in the other components does not increase, as a consequence of Lemma \ref{p-SSj}. 

Note that, using Lemma \ref{p-SSjeta}, the duration of this $j^\textrm{th}$ step is less than or equal to $W_j^0 \lceil \frac{2}c\rceil$. Hence, the total time of the procedure is less than or equal to $\lceil \frac{2}{c}\rceil\sum_{l=1}^d W_l^0=W_*$. 

Let us now investigate the evolution of the size of the velocity support in the variable $v_j$ along the whole procedure. After having applied the strategies $\SSS_1,\ldots,\SSS_{j-1}$, the size of the velocity support in the variable $v_j$ is less than or equal to $W_j^0$; the application of the stragegy $\SSS_j$ decreases this size at some value less than or equal to $\eta$; then, the  application of the strategies $\SSS_{j+1},\ldots,\SSS_d$ keeps this size at some value less than or equal to $\eta$. As a result, the size of the velocity supports of each component is less than or equal to $\eta$ at the end of the procedure. Finally, the velocity support of $\mu^d$ is contained in the ball $B(\tilde v,\frac{\eta\sqrt{d}}{2})$, with $\tilde v=(\tilde a_1,\tilde a_2,\ldots,\tilde a_d)+\frac{\eta}{2}(1,\ldots,1)$, where $\tilde a_i=\min(v_i \in \R\ |\ (x,v)\in \supp(\mu^d))$.

Let us now investigate the evolution of the size of the spatial support. Consider the evolution of the size of the space support in the variable $x_j$ for the whole algorithm. Since the size of the velocity support in the variable $v_j$ is always bounded by $W_j^0$, it follows that $Y_j$ may increase of at most $W_j^0 W_*$. Then the space support of $\mu^d$ is contained in the ball $B(\tilde x,\frac{\tilde{W}}{2})$, with $\tilde x=(\tilde x_1,\ldots,\tilde x_d)$ and $\tilde x_j= \frac{Y_j^0+W_j^0 W_*}{2}$.

Now, to conclude that $\mu(t)$ converges to flocking, it suffices to apply Corollary \ref{cor_flocksize}, since 
$2\frac{\sqrt{d}\eta}2=\frac12 \int_{2\frac{\tilde{W}}2}^\infty \phi(2x)\,dx<\int_{\tilde{W}}^\infty \phi(2x)\,dx$.
\end{proof}

\subsection{Proof of the variant of Theorem \ref{mainthm}}\label{s-space}
In this section, we consider the controlled kinetic Cucker-Smale equation \eqref{kinetic_CS} with the control constraints \eqref{cont_u} and \eqref{cont_omega_variante}.
We restrict our study to the one-dimensional case, the generalization to any dimension being similar to that done in Section \ref{s-SSd}.

We first define the fundamental step of our strategy. Here, the objective is to make decrease the velocity support from $[0,W^0]$ to $[0,\eta]$. We only act on the upper part of the interval. For this reason, we need to define $\alpha^0$, $\beta^0$ only (and not $\alpha^+$, $\beta^+$, $\alpha^-$, $\beta^-$, as in the problem of control with constraint on the crowd). We also can assume $a=0$ for all times.

\paragraph{Fundamental step $\TT$.}
Let $\mu^0\in\mathcal{P}^{ac}_c(\R\times\R)$ be such that $\supp(\mu^0)\subset [0,Y^0]\times [0,W^0]$. 
Let ${\bar v}^0\in(0,W^0)$ be the velocity barycenter of $\mu^0$. 
Using notations similar to those used in Section \ref{s-ss}, we define the functions
$$
\alpha(t) = \frac{\phi(0)}{\phi(0)+\phi(Y(t)+W(t))}(W(t)-{\bar v}(t)), \qquad
\beta(t) =\frac{1}{3}\frac{\phi(Y(t)+W(t))}{\phi(0)+\phi(Y(t)+W(t))}(W(t)-{\bar v}(t)) ,
$$
and we set $\alpha^0=\alpha(0)$, $\beta^0=\beta(0)$, and
$$
\varepsilon^0 =  \min\left( \frac{1}{2}\beta^0,\frac{\sqrt{(Y^0)^2+2c(W^0+1)}-Y^0}{2(W^0+2)} \right) .
$$
We define the (positive) time $T^0=\varepsilon^0$. The fact that $\varepsilon^0$ represents both a distance and a time is due to the fact that the velocity constraint on $u$ is equal to $1$.

Along the time interval $[0,T^0]$, we define the constant control set $\omega(t)=\omega^0$, with $\omega^0=[-\varepsilon^0,Y^0+\varepsilon^0 W^0+\varepsilon^0]\times [W^0-2\varepsilon^0,W^0+2\varepsilon^0]$, and we define the (constant in time) control function $u(t,x,v)=u^0(x,v)$, with $u^0(x,v)=\psi(x)\zeta(y)$, and
\begin{equation*}
\psi(x)= \left\{ \begin{array}{lll}
0& \textrm{if} & x< -\varepsilon^0 , \\
\frac{x+\varepsilon^0}{\varepsilon^0}& \textrm{if} & x\in[-\varepsilon^0,0) , \\
1& \textrm{if} & x\in [0,Y^0+\varepsilon^0 W^0) , \\
\frac{-x+Y^0+\varepsilon^0 W^0+\varepsilon^0}{\varepsilon^0}& \textrm{if} & x\in[Y^0+\varepsilon^0 W^0,Y^0+\varepsilon^0 W^0+\varepsilon^0) , \\
0 & \textrm{if} & x\geq Y^0+\varepsilon^0 W^0+\varepsilon^0 ,
\end{array} \right.
\end{equation*}
and
\begin{equation*}
\zeta(v)= \left\{ \begin{array}{lll}
0& \textrm{if} & v< W^0-2\varepsilon^0 , \\
\frac{W^0-2\varepsilon^0-v}{\varepsilon^0}& \textrm{if} & v\in[W^0-2\varepsilon^0, W^0-\varepsilon^0) , \\
-1& \textrm{if} & v\in [W^0-\varepsilon^0,W^0+\varepsilon^0) , \\
\frac{v-(W^0+2\varepsilon^0)}{\varepsilon^0}& \textrm{if} & v\in[W^0+\varepsilon^0, W^0+2\varepsilon^0) , \\
0 & \textrm{if} & v\geq W^0+2\varepsilon^0 .
\end{array} \right.
\end{equation*}

\medskip

The next result states that the fundamental step $\TT$ is well defined, and that this control strategy makes  the velocity support of the crowd decrease. 

\begin{lemma}\label{p-TT} 
Let $\mu^0=f^0\, \dxv\in\mathcal{P}^{ac}_c(\R\times\R)$, with compact support contained in $[0,Y^0]\times [0,W^0]$.
There exists a unique solution $\mu\in C^0([0,T^0],\mathcal{P}(\R\times\R))$ of \eqref{kinetic_CS},  corresponding to the control $\chi_\omega u$ defined by $\TT$. Moreover:
\begin{itemize}
\item $\mu\in C^0([0,T^0],\mathcal{P}^{ac}_c(\R\times\R))$, that is, the solution $\mu$ remains, like $\mu^0$, absolutely continuous and of compact support;
in particular, at time $T^0$, we have $\mu^1=\mu(T^0)\in\mathcal{P}^{ac}_c(\R\times\R)$;
\item the sets $\R\times[0,W^0]$ and $\R\times [0,W^0-\varepsilon^0]$ are invariant under the controlled particle flow $\Phi_{\omega,u}(t)$ (defined in Corollary \ref{cor_appli_CS_control});
\item setting $Y^1=Y(T^0)$ we have $Y^1\leq Y^0+\varepsilon^0 W^0$ and $0\leq W^1\leq W^0-\varepsilon^0$;
\item the control satisfies the constraints \eqref{cont_u} and \eqref{cont_omega_variante}.
\end{itemize}
\end{lemma}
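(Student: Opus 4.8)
The plan is to follow the scheme of the proof of Lemma~\ref{p-SS}, taking advantage of the fact that for the step $\TT$ the control set $\omega^0$ and the control force $u^0=\psi\zeta$ are constant in time, so that a single application of the existence theory suffices. \textbf{Well-posedness and $\mathcal P^{ac}_c$ regularity.} The force $u^0=\psi\zeta$ is continuous and piecewise affine in $(x,v)$, hence globally Lipschitz; it is bounded by $1$; and it vanishes outside $\omega^0$, so that $\chi_{\omega^0}u^0=u^0$. The velocity field $(v,\xi[\mu]+u^0)$ therefore satisfies the hypotheses of Theorem~\ref{t-esistenza} (its nonlocal part $\xi[\mu]$ having already been checked when proving Corollary~\ref{cor_appli_CS_control}), and Corollary~\ref{cor_appli_CS_control} gives the unique $\mu\in C^0([0,T^0],\mathcal P(\R\times\R))$ with $\mu(0)=\mu^0$; it remains in $\mathcal P^{ac}_c(\R\times\R)$, and $\supp\mu(t)=\Phi_{\omega,u}(t)(\supp\mu^0)$ for every $t$. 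In particular $\mu^1=\mu(T^0)\in\mathcal P^{ac}_c(\R\times\R)$.

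\textbf{Invariance of $\R\times[0,W^0]$, bound on $Y^1$, drift of $\bar v$.} Since $\varepsilon^0\le\tfrac12\beta^0<\tfrac16 W^0$, the cutoff $\zeta$ vanishes near $v=0$ and $\zeta(W^0)=-1$. By Lemma~\ref{lem_trivial}, as long as $\supp\mu(t)\subset\R\times[0,W^0]$ the field $\xi[\mu(t)]$ is $\ge0$ on $\{v=0\}$ and $\le0$ on $\{v=W^0\}$; adding $u^0$ — which is $0$ on $\{v=0\}$ and equals $-\psi\le 0$ on $\{v=W^0\}$ — the full field points inward along $\partial(\R\times[0,W^0])$, so a standard continuity/bootstrap argument yields invariance of $\R\times[0,W^0]$. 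Along any trajectory $\dot x=v\in[0,W^0]$, hence over the time $T^0=\varepsilon^0$ the spatial coordinate moves by at most $\varepsilon^0 W^0$; with $\supp\mu^0\subset[0,Y^0]\times[0,W^0]$ this gives $\supp\mu(t)\subset[0,Y^0+\varepsilon^0 W^0]\times[0,W^0]$, whence $Y^1\le Y^0+\varepsilon^0 W^0$. Moreover, by Lemma~\ref{p-bary} and $u^0=\psi\zeta\le0$, one has $\dot{\bar v}(t)=\int_{\omega^0}u^0\,d\mu(t)\le0$ and $|\dot{\bar v}(t)|\le\mu(\omega^0)\le1$, so $\bar v(\cdot)$ is nonincreasing with $\bar v(t)\in[\bar v^0-\varepsilon^0,\bar v^0]$ on $[0,T^0]$.

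\textbf{Decrease of the velocity support, and the control constraints.} Set $b(t)=\max\{v:(x,v)\in\supp\mu(t)\}$. As in Lemma~\ref{lem_edo_XV}, $b$ is absolutely continuous and, by Danskin's theorem, $\dot b(t)$ equals the $v$-component of the field at a maximizing particle $(x(t),b(t))$; by the previous paragraph $x(t)\in[0,Y^0+\varepsilon^0 W^0]$ so $\psi(x(t))=1$, while $\xi[\mu(t)](x(t),b(t))\le0$ by Lemma~\ref{lem_trivial}. Hence, whenever $b(t)\ge W^0-\varepsilon^0$ (so that $\zeta(b(t))=-1$ and $u^0(x(t),b(t))=-1$), $\dot b(t)\le-1$; the same sign computation at the level $b=W^0-\varepsilon^0$ shows that $b$ cannot re-enter $(W^0-\varepsilon^0,+\infty)$ once it has dropped below. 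Since $b(0)\le W^0$ and $T^0=\varepsilon^0$, this forces $b(T^0)\le W^0-\varepsilon^0$; as $\{v=0\}$ is invariant, $W^1=W(T^0)=b(T^0)\in[0,W^0-\varepsilon^0]$. Finally \eqref{cont_u} is immediate from $\|u^0\|_{L^\infty}\le\|\psi\|_{L^\infty}\|\zeta\|_{L^\infty}\le1$, and \eqref{cont_omega_variante} follows by computing $|\omega^0|=4\varepsilon^0\bigl(Y^0+\varepsilon^0(W^0+2)\bigr)$ and using that $\varepsilon^0$ was taken no larger than the relevant positive root of a quadratic of the form $4(W^0+2)t^2+4Y^0t-c$, so that $|\omega^0|\le c$.

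\textbf{Invariance of $\R\times[0,W^0-\varepsilon^0]$ — the main obstacle.} It remains to establish that the velocity support, once it has entered $[0,W^0-\varepsilon^0]$, cannot leave it; by the previous paragraph this is the effective content of the asserted invariance of $\R\times[0,W^0-\varepsilon^0]$, and for particles on the top face of the velocity support it again follows from Lemma~\ref{lem_trivial} together with the sign of the control. The genuinely delicate point is to prove the stronger statement that $\xi[\mu(t)](x,W^0-\varepsilon^0)<0$ for every $t\in[0,T^0]$ and every $x$ lying over the spatial support — so that the control (which is $\le0$ there) need not compensate anything: I would obtain this from Lemma~\ref{p-perp} applied with $a_k=0$, $W_k=W(t)$, and a spatial ball of half-width $X$ with $2X=Y(t)+W(t)$ containing the support, for which the threshold is exactly $r^+(t)=\alpha(t)$; then, using the elementary identity $\alpha(t)+3\beta(t)=W(t)-\bar v(t)$, the inequality $W^0-\varepsilon^0-\bar v(t)>\alpha(t)$ amounts to $\bigl(W^0-W(t)\bigr)+3\beta(t)>\varepsilon^0$, and the whole purpose of the precise definitions of $\alpha^0,\beta^0$ and of the choice $\varepsilon^0\le\tfrac12\beta^0$ is to force this to hold uniformly on $[0,T^0]$, in spite of the (controlled) drift of $\bar v(t)$, the growth of the spatial support, and the decrease of $W(t)$ accounted for above. \emph{Carrying out this uniform-in-$t$ verification is the only step that is not a direct transcription of arguments already present in Lemmas~\ref{p-SS}, \ref{lem_edo_XV} and \ref{p-perp} or a one-line computation, and it is where I expect the main difficulty to lie.}
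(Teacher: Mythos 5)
Most of your proposal tracks the paper's proof closely and correctly: the well-posedness and $\mathcal P^{ac}_c$ propagation via Theorem~\ref{t-esistenza} and Corollary~\ref{cor_appli_CS_control}, the invariance of $\R\times[0,W^0]$ via Lemma~\ref{lem_trivial}, the bound $Y^1\le Y^0+\varepsilon^0 W^0$ from $T^0=\varepsilon^0$, the estimate $W^1\le W^0-\varepsilon^0$ (the paper argues by contradiction instead of directly via Danskin, but both work since they both rest on the two invariance facts), and the two control constraints including the quadratic in $\varepsilon^0$ coming from $|\omega^0|=4\varepsilon^0\bigl(Y^0+\varepsilon^0(W^0+2)\bigr)$.

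The gap is exactly where you flag it, and it is a genuine one: you leave the invariance of $\R\times[0,W^0-\varepsilon^0]$ unproved. Moreover, the route you sketch for it is harder than necessary. You apply Lemma~\ref{p-perp} with the \emph{time-dependent} quantities $W_k=W(t)$, $2X=Y(t)+W(t)$, and hence $r^+(t)=\alpha(t)$, and you reduce matters to verifying $\bigl(W^0-W(t)\bigr)+3\beta(t)>\varepsilon^0$ uniformly on $[0,T^0]$. This estimate is delicate precisely because $\beta(t)$ degenerates as $W(t)-\bar v(t)\to 0$ and as $Y(t)$ grows; there is no clean monotonicity to exploit, and nothing in the definition of $\varepsilon^0$ is designed to control $\beta(t)$ for $t>0$. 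The paper avoids this entirely by applying Lemma~\ref{p-perp} with the \emph{fixed} data $W_k=W^0$, $a_k=0$, and a bounding box valid on all of $[0,T^0]$ (so that $r^+$ only varies through $\bar v(t)$), together with the drift estimate $|\bar v(t)-\bar v^0|\le T^0=\varepsilon^0\le\tfrac12\beta^0<\beta^0$ that you already derived. Exactly as in Lemma~\ref{p-SS}, this yields that $\xi[\mu(t)]$ points inward at every level $v\ge \bar v^0+\alpha^0+\beta^0$, hence $\R\times[0,\bar v^0+\alpha^0+\beta^0+k^+]$ is invariant for every $k^+\ge 0$. Since the identity $\alpha^0+3\beta^0=W^0-\bar v^0$ gives $\bar v^0+\alpha^0+\beta^0=W^0-2\beta^0$, the choice $\varepsilon^0\le\tfrac12\beta^0\le 2\beta^0$ gives $W^0-\varepsilon^0\ge \bar v^0+\alpha^0+\beta^0$, so $\R\times[0,W^0-\varepsilon^0]$ is one of these invariant sets. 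That closes the step with the data $\alpha^0,\beta^0$ frozen at $t=0$, which is what the definition of $\varepsilon^0$ is actually built to serve; your time-dependent $\alpha(t),\beta(t)$ reduction is not what the construction is tuned for and would require additional, and in fact unavailable, uniform lower bounds on $\beta(t)$.
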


\begin{proof}
The proof of the fact that $\mu\in C^0([0,T^0],\mathcal{P}^{ac}_c(\R\times\R))$ is similar to the proof of Lemma \ref{p-SS}.

The set $\R\times[0,W^0]$ is invariant under the controlled particle flow $\Phi_{\omega,u}(t)$, because by construction the vector field $\xi[\mu(t)]$ and $u^0$ point inwards along the boundary of that domain. Since $\supp(\mu^0)\subset [0,Y^0]\times[0,W^0]$, it follows that $\supp(\mu(t))\subset [0,Y^0+\varepsilon^0 W^0]\times[0,W^0]$ for $t\in[0,T^0]$ because $T^0=\varepsilon^0$. In particular we get that $Y^1\leq Y^0+\varepsilon^0 W^0$.

The proof of the fact that the set $\R\times [0,W^0-\varepsilon^0]$ is invariant under the controlled particle flow $\Phi_{\omega,u}(t)$ is similar to the proof of Lemma \ref{p-SS}, noting that the velocity barycenter ${\bar v}(t)$ satisfies $\vert{\bar v}(t)-{\bar v}(0)\vert<\beta^0$ and thus that the vector field $\xi[\mu(t)]$ points inwards at any point $(x,v)$ such that $v\geq {\bar v}(0)+\alpha^0+\beta^0$.

Recall that $[0,W(t)]$ is the velocity support of $\mu(t)$. Since the set $\R\times [0,W^0]$ is invariant, we have $W(t)\leq W^0$ for every $t\in [0,T^0]$. Let us prove that $W^1=W(T^0)\leq W^0-\varepsilon^0$. By contradiction, let us assume that $W^1>W^0-\varepsilon^0$. Then $W(t)>W^0-\varepsilon^0$ for every $t\in[0,T^0]$, otherwise there would exist $\bar t\in[0,T^0]$ such that $W(\bar t)\leq W^0-\varepsilon^0$, and then $W^1=W(T^0)\leq W^0-\varepsilon^0$ by invariance of the set $\R\times[0,W^0-\varepsilon^0]$ under the controlled particle flow. Since $\beta^0>\varepsilon^0$, it follows that $W(t)\geq W^0-\beta^0$ on the whole interval $[0,T^0]$, and then the velocity component of the vector field acting on any $(x,v)$ with $v=W(t)$ is $\xi[\mu(t)](x,v)+u(x,v)$. But one has $\xi[\mu(t)](x,v)<0$ because $v-{\bar v}(t)>\alpha^0$, and $u(x,v)=-1$ by definition of $u$. Since this estimate holds for any $(x,v)\in\omega^0(t)$ with $v=W(t)$, it follows that $\dot W(t)<- 1$. Since this holds true for every $t\in [0,T^0]$, we infer that $W(T)\leq W^0-\varepsilon^0$, which is a contradiction.

Finally, let us prove that the control satisfies the constraints.
The control $\chi_{\omega} u$ satisfies ${\bf (H)}$ and $\Vert u(t)\Vert _{L^\infty(\R^d\times\R^d)}\leq 1$ by construction.
The constraint $\vert\omega(t)\vert\leq c$ follows from the choice of $\varepsilon^0$. Indeed, by construction we have $\vert\omega(t)\vert\leq 4\varepsilon^0 (Y^0+\varepsilon^0 W^0+2\varepsilon^0)$, and solving the equation $4\varepsilon (Y^0+\varepsilon W^0+2\varepsilon)=c$ yields $\varepsilon= (\sqrt{(Y^0)^2+c(W_0+2)}-Y_0)/2(W_0+2)$. But we have chosen $\varepsilon^0$ such that $\varepsilon^0\leq (\sqrt{(Y^0)^2+2cW^0+2c}-Y^0) / 2(W^0+2)$.
\end{proof}

As previously, the complete strategy consists of applying iteratively the fundamental step $\TT$ until the size of the velocity support decreases under a threshold $\eta$.

\paragraph{Complete strategy $\TTT$.}
Let $\eta>0$. We apply the fundamental step $\TT$ iteratively: while $W^i> \eta$, we compute $\mu^{i+1}=\mu(\sum_{j=0}^i T^j)$.

\medskip

As before, we establish that the above iteration terminates.

\begin{lemma}\label{p-TTeta} 
Let $\mu^0\in\mathcal{P}^{ac}_c(\R\times\R)$ be such that $\supp(\mu^0)\subset [0,Y^0]\times [0,W^0]$, and let $\eta>0$. Then there exists $k\in\N^*$ such that the probability measure $\mu^k=f^k\, \dxv$, with support contained in $[0,Y^k]\times [0,W^k]$, is such that $W^k\leq \eta$. Moreover, we have $\mu^k=\mu(\sum_{j=0}^k T^j)$ with $\sum_{j=0}^k T^j\leq W_0$, and we have $Y^k\leq Y^0+(W^0)^2$. Furthermore, the control satisfies the constraints \eqref{cont_u} and \eqref{cont_omega_variante}.
\end{lemma}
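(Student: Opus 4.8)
The plan is to mirror the proof of Lemma~\ref{p-SSeta} (and of its multidimensional counterpart Lemma~\ref{p-SSjeta}), using the fundamental step $\TT$ in place of $\SS$. First I would extract the easy estimates by iterating Lemma~\ref{p-TT}: each application yields a time $T^i=\varepsilon^i$ with $W^{i+1}\leq W^i-\varepsilon^i$ and $Y^{i+1}\leq Y^i+\varepsilon^i W^i\leq Y^i+\varepsilon^i W^0$; since $W^i\geq 0$ for all $i$, the partial sums $\sum_{j\leq i}\varepsilon^j$ never exceed $W^0$, giving $\sum_j T^j\leq W^0$ and hence $Y^k\leq Y^0+W^0\sum_j\varepsilon^j\leq Y^0+(W^0)^2$. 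The constraints \eqref{cont_u} and \eqref{cont_omega_variante} hold at each step by Lemma~\ref{p-TT}, hence on the whole run. So the only substantive point is that the iteration terminates.

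Suppose, for contradiction, that it runs forever, i.e.\ $W^i>\eta$ for every $i$; then $(W^i)$ is nonincreasing, nonnegative, and converges to some $\bar W\geq\eta>0$. The geometric term in the definition of $\varepsilon^i$ equals $\frac{2c(W^i+1)}{2(W^i+2)\left(\sqrt{(Y^i)^2+2c(W^i+1)}+Y^i\right)}$, which, using $Y^i\leq Y^0+(W^0)^2$ and $W^i\leq W^0$, is bounded below by a constant $g_*>0$ depending only on $c$, $Y^0$, $W^0$. Thus $\varepsilon^i\geq\min\left(\tfrac12\beta^i,g_*\right)$, so if $\tfrac12\beta^i\geq g_*$ held for infinitely many $i$ we would get $\sum_i\varepsilon^i=+\infty$, contradicting $\sum_i\varepsilon^i\leq W^0$. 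Hence $\varepsilon^i=\tfrac12\beta^i$ for all large $i$, and $\varepsilon^i\to 0$. Since $\phi$ is positive and nonincreasing and $Y^i+W^i$ stays bounded, $\frac{\phi(Y^i+W^i)}{\phi(0)+\phi(Y^i+W^i)}\geq q>0$, so $\beta^i\geq\tfrac q3(W^i-\bar v^i)$ and therefore $W^i-\bar v^i\to 0$.

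The crux is to contradict $W^i-\bar v^i\to 0$. In step $\TT$ the control force satisfies $u\leq 0$ (as $\zeta\in[-1,0]$ and $\psi\geq 0$), so Lemma~\ref{p-bary} gives $\dot{\bar v}(t)\leq 0$; hence $\bar v^i$ is nonincreasing, and together with $W^i\downarrow\bar W$ and $W^i-\bar v^i\to 0$ this would force the velocity marginal of $\mu^i$ to collapse to $\delta_{\bar W}$ with $\bar W>0$. I would rule this out by combining: (a) \emph{absolute continuity} --- writing $F_i=\Vert f^i\Vert_{L^\infty}$, a velocity band of width $\delta$ below the top carries mass at most $F_iY^i\delta$, so choosing $\delta$ with $F_iY^i\delta=\tfrac12$ leaves mass $\geq\tfrac12$ at velocity distance $\geq\delta$ below $W^i$, whence $W^i-\bar v^i\geq\frac1{4F_iY^i}$; (b) a \emph{control of $F_i$} via Lemma~\ref{prop_e-Lp}, since on step $i$ one has $\Vert\mathrm{div}_v u^i\Vert_{L^\infty(\omega^i)}\leq 1/\varepsilon^i$, so step $i$ multiplies $\Vert f\Vert_{L^\infty}$ by at most $e^{1+\phi(0)\varepsilon^i}$, the total run being of finite length ($\sum_i\varepsilon^i\leq W^0$); (c) the fact that the mass \emph{strictly below} the control slice $[W^i-2\varepsilon^i,W^i+2\varepsilon^i]$ is essentially frozen during step $i$ (it moves only by the drift of $\xi[\mu]$, which is of order $\varepsilon^i$ there), so the lower bound on $W^i-\bar v^i$ is transmitted, up to controlled losses, to $W^{i+1}-\bar v^{i+1}$. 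Feeding (a)--(c) into $\varepsilon^i=\tfrac12\beta^i=\tfrac16\frac{\phi(Y^i+W^i)}{\phi(0)+\phi(Y^i+W^i)}(W^i-\bar v^i)$ shows $\varepsilon^i$ cannot tend to $0$, contradicting $\sum_i\varepsilon^i\leq W^0$. Hence the iteration stops at some step $k$ with $W^k\leq\eta$, and the bounds already proved give $\sum_{j=0}^k T^j\leq W^0$ and $Y^k\leq Y^0+(W^0)^2$; the control constraints follow from Lemma~\ref{p-TT}.

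The main obstacle is precisely the bookkeeping in (b)--(c): unlike in Lemma~\ref{p-SSeta}, where $\Vert\mathrm{div}_v u\Vert_{L^\infty}$ was bounded by $1/\beta^i$ with $\beta^i$ bounded below, here it is only bounded by $1/\varepsilon^i$, so one has to track carefully how the potential growth of $\Vert f^i\Vert_{L^\infty}$ is compensated by the simultaneous shrinking of $\varepsilon^i$ through the dynamics of the mass lying outside the control region; the remaining steps are routine Gronwall- and Markov-type arguments.
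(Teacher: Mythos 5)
Your easy estimates (monotonicity of $W^i$, $\sum_j T^j=\sum_j \varepsilon^j\leq W^0$, $Y^k\leq Y^0+(W^0)^2$, constraints from Lemma~\ref{p-TT}) match the paper's proof, and you correctly locate the crux in the termination argument. But the route you propose to contradict $W^i-\bar v^i\to 0$ is not the paper's, and as you yourself suspect, it does not close. The breakdown is in your item (b): from Lemma~\ref{prop_e-Lp} with $\Vert\mathrm{div}_v u^i\Vert_{L^\infty(\omega^i)}\leq 1/\varepsilon^i$ and step duration $T^i=\varepsilon^i$, the best available bound is $\Vert f^{i+1}\Vert_{L^\infty}\leq e^{\,1+\phi(0)\varepsilon^i}\,\Vert f^i\Vert_{L^\infty}$, so each step can multiply the sup-norm by a factor $\geq e$ \emph{regardless of how small $\varepsilon^i$ is}. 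Over infinitely many steps the cumulative factor is $e^{\,k+\phi(0)\sum_j\varepsilon^j}$, which diverges with $k$ even though $\sum_j\varepsilon^j\leq W^0$. Consequently your item (a) yields the lower bound $W^i-\bar v^i\geq 1/(4F_iY^i)$ with $F_i\to\infty$, which is vacuous, and item (c) as stated is too vague to repair this. So the chain (a)--(c) does not produce a contradiction, and the termination is not actually established by your proposal.

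For comparison, the paper's proof is more direct and does not go through $\Vert f^i\Vert_{L^\infty}$ at all: it bounds $\beta^i$ below by a constant $\bar\beta>0$ depending only on $Y^0$, $W^0$, $\bar W$, and separately bounds the ``geometric'' term in the definition of $\varepsilon^i$ below by a constant $\bar\gamma>0$, concluding $\varepsilon^i\geq\min(\tfrac12\bar\beta,\bar\gamma)>0$, which contradicts $\sum_j\varepsilon^j\leq W^0$. (You should be aware, though, that the justification the paper gives for $\beta^i\geq\bar\beta$ --- the dichotomy ``either $W^i-\bar v^i\geq W^i/2$ or $\bar v^i\geq W^i/2$, and in both cases $\beta^i\geq\ldots$'' --- only yields the desired bound in the first branch, since in $\TT$ the quantity $\beta^i$ is \emph{one-sided} and proportional to $W^i-\bar v^i$ only; the second branch gives nothing, and the phrasing appears to be carried over from Lemma~\ref{p-SSeta}, where $\beta=\max(\beta^+,\beta^-)$ is two-sided. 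So the hard point you identified --- showing $W^i-\bar v^i$ cannot vanish along the iteration --- is genuinely the substantive issue, but neither your $L^\infty$-based argument nor, as literally written, the paper's dichotomy fully dispatches it.)
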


\begin{proof}
Consider the sequence of positive real numbers $\varepsilon^i$ obtained by the iterative application of the fundamental step $\TT$. According to Lemma \ref{p-TT}, we have $W^{i+1}\leq W^i-\varepsilon^i$ for every $i$, and since $W^i\geq 0$, it follows that $\sum_{j=1}^i \varepsilon^i\leq W_0$ for every $i$. Setting $\bar{T}=\sum_{j=1}^{+\infty} T^j$, we have $\bar T\leq W^0$. As a consequence, the controlled particle flow $\Phi_{\omega,u}(t)$ lets the set $[0,Y^0+(W^0)^2]\times [0,W^0]$ invariant, for every time $t\in[0,\bar{T})$, where the time interval is open at $\bar T$ since we have not proved yet the convergence of the complete procedure. Note that this implies that $Y^i\leq Y^0+(W^0)^2$ for every $i$.
Since the sequence $(W^i)_{i\in\N^*}$ is bounded below by $0$, bounded above by $W^0$, and is decreasing (because $W^{i+1}\leq W^i-\varepsilon^i$ with $\varepsilon^i>0$), it converges to some limit $\bar W\geq 0$. Let us prove that $\bar W=0$. By contradiction, let us assume that $\bar W>0$. Then, for any given $i$, we have $W^i\geq \bar W$ and either $W^i-{\bar v}^i\geq \frac{W^i}{2}\geq \frac{\bar{W}}{2}$ or ${\bar v}^i\geq \frac{W_i}{2}\geq \frac{\bar{W}}{2}$. In both cases, we have 
$$
\beta^i\geq\frac{1}{3} \frac{\phi(Y^i+W^i)}{\phi(0)+\phi(Y^i+W^i)} \frac{\bar{W}}{2}\geq \frac{\phi(Y^0+(W^0)^2+W^0)}{\phi(0)+\phi(\bar W)} \frac{\bar{W}}{2},
$$
where we have used that $0\leq Y^i\leq (Y^0+W^0)^2$, that $\bar W\leq W^i\leq W^0$ and that $\phi$ is decreasing. Since the estimate does not depend on $i$, it follows that $\beta^i\geq \bar\beta$ for every $i$, with $\bar \beta=\frac{\phi(Y^0+(W^0)^2+W^0)}{\phi(0)+\phi(\bar W)} \frac{\bar{W}}{2}>0$.
Similarly, note that $\bar W\leq W^i\leq W^0$ implies 
$$
\frac{\sqrt{(Y^i)^2+2cW^i+2c}-Y^i}{2(W^i+2)}\geq \frac{\sqrt{(Y^i)^2+2c\bar W+2c}-Y^i}{2(W^0+2)} = h(Y^i).
$$
The function $h$ is decreasing with respect to $Y^i$ in the interval $Y^i\in[0,Y^0+(W^0)^2]$, and reaches its minimum for $Y^i=Y^0+(W^0)^2$, therefore
$$
\frac{\sqrt{(Y^i)^2+2cW^i+2c}-Y^i}{2(W^i+2)}\geq \bar \gamma=
\frac{\sqrt{(Y^0+(W^0)^2)^2+2c\bar W+2c}-(Y^0+(W^0)^2)}{2(W^0+2)}>0.
$$
It follows that $\varepsilon^i\geq  \min\left(\frac{1}{2}\bar\beta,\bar \gamma\right)$, and since $\bar\beta$ and $\bar \gamma$ do not depend on $i$, $\varepsilon^i$ does not converge to $0$. This contradicts the fact that $\sum_{j=1}^\infty T^j=\sum_{j=1}^\infty \varepsilon^j\leq W^0$.
Therefore, $W^i$ converges to $0$ as $i$ tends to $+\infty$, and it follows that there exists $k$ such that $W^k<\eta$, which means that the algorithm terminates.

For $i=k$, we have obtained $\mu^k=\mu(\sum_{j=0}^k T^j)$ with $\sum_{j=0}^k T^j\leq W_0$, and $Y^k\leq Y^0+(W^0)^2$.

To prove that the constraints on the control are satisfied, it suffices to apply Lemma \ref{p-TT} for the $k$ steps.
\end{proof}

Now, it suffices to choose adequately $\eta$ to obtain flocking.

\begin{theorem}[Flocking in 1D] 
Let $\mu^0\in\mathcal{P}^{ac}_c(\R\times\R)$ be such that $\supp(\mu^0)\subset [0,Y^0]\times [0,W^0]$, and let $c>0$ be arbitrary. Then, the strategy $\TTT$ applied with 
$$
\eta=\frac{1}{2}\int_{2(Y^0+ (W^0)^2)}^\infty \phi(2x)\,dx
$$ 
provides a control satisfying the constraints \eqref{cont_u} and \eqref{cont_omega_variante}, which steers the system \eqref{kinetic_CS} to the flocking region in time less than or equal to $W^0$. Then $\mu(t)$ converges to flocking.
\end{theorem}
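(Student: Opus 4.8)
The plan is to piggyback on the machinery already assembled for the complete strategy $\TTT$, and to deduce flocking once we have landed in the flocking region, where Corollary \ref{cor_flocksize} does the rest. So there is essentially nothing new to prove beyond a short chain of inequalities: the real content is contained in Lemma \ref{p-TT} and Lemma \ref{p-TTeta}.

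First I would run $\TTT$ starting from $\mu^0$ with the prescribed threshold $\eta=\frac{1}{2}\int_{2(Y^0+(W^0)^2)}^\infty \phi(2x)\,dx$. By Lemma \ref{p-TTeta}, this iteration terminates: there exist an integer $k\in\N^*$ and a time $T=\sum_{j=0}^k T^j\leq W^0$ such that $\mu^k=\mu(T)\in\mathcal{P}^{ac}_c(\R\times\R)$, with $\supp(\mu^k)\subset[0,Y^k]\times[0,W^k]$, $W^k\leq\eta$ and $Y^k\leq Y^0+(W^0)^2$, and such that the control $\chi_\omega u$ built along $[0,T]$ satisfies the constraints \eqref{cont_u} and \eqref{cont_omega_variante}. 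This already yields the announced bound $T\leq W^0$ on the control horizon.

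Next I would verify that $\mu^k$ belongs to the flocking region. Since $\supp(\mu^k)\subset[0,Y^k]\times[0,W^k]\subset B\!\left(\tfrac{Y^k}{2},Y^k\right)\times B\!\left(\tfrac{W^k}{2},W^k\right)$, Corollary \ref{cor_flocksize} applies as soon as $2W^k\leq\int_{2Y^k}^{+\infty}\phi(2x)\,dx$, and this follows from the choice of $\eta$ together with $Y^k\leq Y^0+(W^0)^2$ and the nonnegativity and monotonicity of $\phi$:
\begin{equation*}
2W^k\leq 2\eta=\int_{2(Y^0+(W^0)^2)}^{+\infty}\phi(2x)\,dx\leq\int_{2Y^k}^{+\infty}\phi(2x)\,dx .
\end{equation*}
Hence $\mu(T)=\mu^k$ lies in the flocking region. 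Finally I would set $u\equiv 0$ for $t>T$ (the concatenated control still trivially obeys \eqref{cont_u} and \eqref{cont_omega_variante} on $(T,+\infty)$), and conclude via Corollary \ref{cor_flocksize} — itself a consequence of Theorem \ref{c-flocksize} — that the uncontrolled solution issued from $\mu(T)$ converges to flocking as $t\to+\infty$.

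The main obstacle is not in this final argument at all: all the delicate points — Lipschitz regularity of the piecewise-defined $\chi_\omega u$, invariance of the slabs $\R\times[0,W^0]$ and $\R\times[0,W^0-\varepsilon^0]$ under the controlled particle flow, the strict decrease $W^{i+1}\leq W^i-\varepsilon^i$, the uniform lower bounds on $\beta^i$ and $\varepsilon^i$ (via the $L^\infty$ estimate of Lemma \ref{prop_e-Lp}) that force $\sum_i T^i=+\infty$ unless $W^i\to0$, and the verification of the area constraint $|\omega^0|\leq c$ through the quadratic equation defining $\varepsilon^0$ — are already absorbed into Lemmas \ref{p-TT} and \ref{p-TTeta}. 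The only thing to watch here is the bookkeeping of constants in the displayed inequality chain, in particular the fact that over the whole procedure the spatial support grows by at most $(W^0)^2$, which is exactly what keeps $2Y^k$ below the lower integration limit appearing in the definition of $\eta$.
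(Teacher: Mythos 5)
Your proof is correct and follows essentially the same route as the paper's: invoke Lemma \ref{p-TTeta} to obtain $W^k\leq\eta$, $Y^k\leq Y^0+(W^0)^2$, the time bound $\sum_j T^j\leq W^0$ and the control constraints, then check the hypothesis $2W^k\leq\int_{2Y^k}^{+\infty}\phi(2x)\,dx$ of Corollary \ref{cor_flocksize} via the displayed chain of inequalities. The extra details you supply (embedding $[0,Y^k]\times[0,W^k]$ into balls to match the hypotheses of Corollary \ref{cor_flocksize}, and explicitly switching the control off after time $T$) are precisely what the paper leaves implicit, so this is a faithful reconstruction.
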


\begin{proof}
By Lemma \ref{p-TTeta}, we have $Y^k\leq Y^0+ (W^0)^2$ and 
$$
2 W^k\leq 2\eta=\int_{2(Y^0+ (W^0)^2)}^\infty \phi(2x)\,dx\leq \int_{2 Y^k}^\infty \phi(2x)\,dx.
$$
Using Corollary \ref{cor_flocksize}, the flocking property follows. The estimate on the time at which $\mu(t)$ has reached the flocking region follows from Lemma \ref{p-TTeta}, and the conditions on the control follow from Lemma \ref{p-TTeta}.
\end{proof}

\noindent{\bf Acknowledgment.}
This work was initiated during a visit of F. Rossi and E. Tr\'elat to Rutgers University, Camden, NJ, USA. They thank the institution for its hospitality.\\
The work was partially supported by the NSF Grant \#1107444 (KI-Net: Kinetic description of emerging challenges in multiscale problems of natural sciences), and by the Grant FA9550-14-1-0214 of the EOARD-AFOSR.

\end{document}